\newtheorem{theorem}{Theorem}[section]
\newtheorem{lemma}[theorem]{Lemma}
\newtheorem{definition}[theorem]{Definition}
\newtheorem{proposition}[theorem]{Proposition}
\def\R{{\mathbb R}}
\def\S{{\mathbb S}}
\newcommand{\cF}{\mathcal{F}}
\newcommand{\ds}{\displaystyle{}}
\newcommand{\overbar}[1]{\mkern 1.5mu\overline{\mkern-1.5mu#1\mkern-1.5mu}\mkern 1.5mu}
\newcommand{\cp}{\operatorname{cap}}
\newcommand{\supp}{\operatorname{supp}}
\newcommand{\dist}{\operatorname{dist}}
\newcommand{\Beta}{{\rm B}}
\begin{document}

\title{Minimum Energy Problem on the Hypersphere}

\author[Mykhailo Bilogliadov]{Mykhailo Bilogliadov}
\email{mykhail@okstate.edu}
\address{Department of Mathematics, Oklahoma State University, Stillwater, OK 74078, U.S.A.}

%\date{March 9, 2016}

\begin{abstract}
We consider the minimum energy problem on the unit sphere $\S^{d-1}$ in the Euclidean space $\R^d$, $d\geq 3$, in the presence of an external field $Q$, where the charges are assumed to interact according to  Newtonian 
potential $1/r^{d-2}$, with $r$ denoting the Euclidean distance. We solve the problem by finding the support of the 
extremal measure, and obtaining an explicit expression for the density of the extremal measure. We then apply our results to an external field generated by a point charge of positive magnitude, placed at the North Pole of the sphere, and to a quadratic external field.

\smallskip
\noindent \textbf{MSC 2010.} 31B05, 31B10, 31B15

\smallskip
\noindent \textbf{Key words.} Minimum energy problem, Newtonian potential, weighted energy, equilibrium measure, extremal measure
\end{abstract}

\maketitle

\medskip
\medskip
\noindent

\section{Introduction}

Let $\S^{d-1}:=\{x\in\R^d: |x|=1\}$ be the unit sphere in $\R^d$, $d\geq 3$, where $|\cdot|$ is the Euclidean distance. Given a compact set $E\subset\S^{d-1}$, consider the class $\mathcal{M}(E)$ of unit positive Borel measures supported on $E$. The Newtonian potential and Newtonian energy of a measure $\mu\in\mathcal{M}(E)$ are defined respectively as 
$$U^{\mu}(x):=\int \frac {1} {|x-y|^{d-2}} \, d\mu(y), \quad I(\mu):=\iint \frac {1} {|x-y|^{d-2}} \, d\mu(x)d\mu(y).$$
Let 
$$W(E):=\inf\{I(\mu): \mu\in\mathcal{M}(E)\}.$$
Define the Newtonian capacity of $E$ as $\cp(E):=1/W(E)$. We say that a property holds quasi-everywhere (q.e.), if the exceptional set has a Newtonian capacity zero. When $\cp(E)>0$, there is a unique $\mu_{E}$ such that $I(\mu_E)=W(E)$. Such $\mu_E$ is called the Newtonian equilibrium measure for $E$.

An external field is defined as a non-negative lower-semicontinuous function $Q: E \rightarrow [0,\infty]$, such that $Q(x)<\infty$ on a set of positive Lebesgue surface measure. The weighted energy associated with $Q(x)$ is then defined by $$I_Q(\mu):=I(\mu)+2\int Q(x)d\mu(x).$$

\noindent The energy problem on $\S^{d-1}$ in the presence of the external field $Q(x)$ refers to the minimal quantity 
$$V_Q:=\inf\{I_Q(\mu): \mu\in\mathcal{M}(E)\}.$$ 
A measure $\mu_Q \in\mathcal{M}(E)$ such that $I_Q(\mu_Q)=V_Q$ is called an extremal (or positive Newtonian equilibrium) measure associated with $Q(x)$.

The potential $U^{\mu_Q}$ of the measure $\mu_Q$ satisfies the Gauss variational inequalities
\begin{empheq}{align}\label{var1}
U^{\mu_Q}(x)+Q(x)&\geq F_Q \quad \text{q.e. on}\ \ E,\\\label{var2}
U^{\mu_Q}(x)+Q(x)& \leq F_Q \quad \text{for all}\ \ x\in S_Q,
\end{empheq}
where $F_Q:=V_Q-\ds{ \int Q(x)\,d\mu_Q(x)}$, and $S_Q:=\supp{\mu_Q}$ (see Theorem 10.3 in \cite{mizuta} or Proposition 3 in \cite{bds2}, and also book \cite{bhs}). We remark that for continuous external fields, the inequality in $(\ref{var1})$ holds everywhere, which implies that equality holds in $(\ref{var2})$.

The minimum energy problems with external fields on the sphere were a subject of investigation by 
the group of Brauchart, Dragnev and Saff \cite{bds1}--\cite{bds3}, \cite{ddss}, see also \cite{bilo}. 
For a comprehensive treatment of the subject one should refer to a forthcoming book \cite{bhs}. 

The main aim of this paper is to provide a solution to the weighted energy problem on the sphere 
$\S^{d-1}\subset\R^d$, $d\geq 3$, immersed in a general external field, possessing rotational symmetry with respect to the polar 
axis, when support is a spherical cap. It is assumed 
that the charges interact according to the classical Newtonian potential $1/r^{d-2}$, where $r$ denotes the Euclidean distance. We obtain an equation that describes the support of the 
extremal measure, while also giving an explicit expression for the equilibrium density, thus extending an approach previously developed in \cite{bilo} for the case $d=3$. As an application of our results, we consider an external field produced by a point charge of positive magnitude, placed at the North Pole of the sphere, and explicitly compute support and density of the corresponding extremal measure. As another application of the developed theory, we consider the case when the sphere is immersed into a quadratic external field, and also explicitly find support and density of the extremal measure.

We remark that the density of the extremal measure when the external field is generated by a point charge of positive magnitude, located at the North Pole of the sphere, for the case of general Riesz potentials, was first found in \cite{ddss}. We also note that the results of \cite{bds1} can be extended to cover such a case as well, again for the general Riesz potentials. In work \cite{bilo} this case when $d=3$ and the particles were assumed to interact via Newtonian potential, was investigated using an approach different from the one employed in \cite{ddss} and \cite{bds1}. 

We introduce hyperspherical polar coordinates $r, \theta_1, \theta_2,\ldots, \theta_{d-2}, \varphi$, defined by
\begin{align*}
x_1 & = r\cos\theta_1,\\
x_2 & = r\sin\theta_1\cos\theta_2,\\
x_3 & = r\sin\theta_1\sin\theta_2\cos\theta_3,\\
& \vdots \\
x_{d-2} & = r\sin\theta_1\sin\theta_2\ldots\sin\theta_{d-3} \cos\theta_{d-2},\\
x_{d-1} & = r\sin\theta_1\sin\theta_2\ldots\sin\theta_{d-2} \cos\varphi,\\
x_{d} & = r\sin\theta_1\sin\theta_2\ldots\sin\theta_{d-2} \sin\varphi,
\end{align*}
where $0\leq r$, $0\leq\theta_j\leq \pi$, $j=1,2,\ldots,d-2$, and $0\leq\varphi \leq 2\pi$, see \cite{be2}. The surface area element of the unit sphere $\S^{d-1}$, written in hyperspherical coordinates, is given by
\[
d\sigma_d = \sin^{d-2} \theta_1 \, \sin^{d-3} \theta_2 \ldots \sin\theta_{d-2} \, d\theta_1\, d\theta_2 \ldots d\theta_{d-2}\, d\varphi.
\]
The total surface area of the sphere $\S^{d-1}$ is given by 
\[
\omega_d=\frac{2\pi^{d/2}}{\Gamma(d/2)}.
\]
\noindent A spherical cap on the sphere $\S^{d-1}$, centered at the North Pole, is defined via an angle $\alpha$, $0 < \alpha \leq \pi$, as
\begin{equation*} 
C_{N,\alpha}:= \{ (r,\theta_1,\ldots,\theta_{d-2},\varphi): r=1,\ \ 0\leq \theta_1 \leq \alpha, \ \ 0\leq \theta_j \leq \pi, j=2,\ldots,d-2, \ \  0\leq \theta \leq 2\pi  \}.
\end{equation*}
Similarly, a spherical cap centered at the South Pole, is defined in terms of an angle $\alpha$, $0 < \alpha \leq \pi$, as
\begin{equation*} 
C_{S,\alpha}:= \{ (r,\theta_1,\ldots,\theta_{d-2},\varphi): r=1,\ \ \alpha \leq \theta_1 \leq \pi, \ \ 0\leq \theta_j \leq \pi, j=2,\ldots,d-2, \ \  0\leq \theta \leq 2\pi  \}.
\end{equation*}

\noindent In what follows, we will need to use certain special functions, for which we fix the notation here. The incomplete Beta function $\Beta(z; a,b)$ is defined as
\begin{equation}\label{betafdef}
\Beta(z;a,b): = \int_0^z t^{a-1} (1-t)^{b-1}\, dt.
\end{equation}
The Gauss hypergeometric function $_2 F_1(a,b;c,z)$ is defined via series
\begin{equation}\label{gausshyperdef}
_2 F_1(a,b;c,z): = \sum_{n=0}^\infty \frac {(a)_n\, (b)_n} {(c)_n} \, \frac {z^n} {n!}, \quad |z|<1,
\end{equation}
where $(a)_0: = 1$ and $(a)_n: = a (a+1) \ldots (a+n-1)$ for $n\geq 1$ is the Pochhammer symbol.

We begin by recording sufficient conditions on an external field $Q$, that guarantee that the extremal support 
$S_Q$ of the equilibrium measure $\mu_Q$ is a spherical cap $C_{S,\alpha}$, centered at the South Pole.
The following proposition is a consequence of a more general statement, proved in \cite{bds1}.
\begin{proposition}\label{scssp}
Let a non-negative external field $Q$ be rotationally invariant with respect to rotations about the polar axis $x_1$, that is $Q(x)=Q(x_1)$, where 
$x=(x_1,x_2,\ldots,x_d)\in\S^{d-1}$. Suppose that $Q(x_1)$ is an increasing convex function on $[-1,1]$. Then the support of the 
extremal measure $\mu_Q$ is a spherical cap centered at the South Pole, that is $S_Q=C_{S,\alpha}$.
\end{proposition}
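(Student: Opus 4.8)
The plan is to reduce the problem to a one-dimensional one in the height variable $t := x_1 \in [-1,1]$ and then to recognize it as the spherical counterpart of the classical fact that a convex external field produces an equilibrium support which is a single interval. First I would establish that $\mu_Q$ inherits the rotational symmetry of the data. The functional $\mu \mapsto I_Q(\mu)$ is strictly convex on the class of unit measures of finite energy (the Newtonian energy $I$ is a strictly positive-definite quadratic form and the term $2\int Q\, d\mu$ is linear), so the extremal measure is unique; since $Q(x)=Q(x_1)$ is invariant under every rotation $\rho$ fixing the $x_1$-axis and $I_Q(\rho_{\#}\mu)=I_Q(\mu)$, uniqueness forces $\rho_{\#}\mu_Q=\mu_Q$. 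Consequently $U^{\mu_Q}$ is a function of $x_1$ alone, the support $S_Q$ is a union of parallels $\{x_1=t\}$, and it is described by a closed set $A\subseteq[-1,1]$. Writing $\Psi(t):=U^{\mu_Q}(x_t)+Q(t)$ for a representative point $x_t$ with first coordinate $t$, the variational inequalities $(\ref{var1})$–$(\ref{var2})$ (which hold with equality on $S_Q$ for continuous $Q$, and $Q$ is continuous, being convex) show that $\Psi\ge F_Q$ on $[-1,1]$ with $\Psi\equiv F_Q$ on $A$; thus $A$ is contained in the set where $\Psi$ attains its minimum. The goal becomes to prove that this coincidence set has the form $[-1,\cos\alpha]$, i.e. is a cap about the South Pole $x_1=-1$.

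Next I would invoke the general result of \cite{bds1}. The North Pole corresponds to $x_1=1$, where $Q$ is largest, and the South Pole to $x_1=-1$, where $Q$ is smallest; since charges avoid regions of large field, monotonicity should anchor the support at the South Pole, while convexity should make it connected, a single cap rather than a band or several caps. Concretely, the argument of \cite{bds1} passes to the signed equilibrium measure $\eta_\alpha$ on each trial cap $C_{S,\alpha}$ (the signed measure of total mass one whose weighted potential $U^{\eta_\alpha}+Q$ is constant on $C_{S,\alpha}$) and analyzes its density as a function of the height. It is here that the hypotheses enter: increasing $Q$ forces the sign pattern ensuring that $\eta_\alpha\ge 0$ exactly for caps surrounding the South Pole, while convexity of $Q$ forces the density to change sign at most once, so that there is a single critical angle $\alpha$ at which $\eta_\alpha$ is a nonnegative measure vanishing at the boundary parallel. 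That critical cap is the support $S_Q$. Since by hypothesis $Q(x_1)$ is increasing and convex on $[-1,1]$, the conditions of the cited statement are met and $S_Q=C_{S,\alpha}$ follows.

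The main obstacle is the connectedness step, i.e. excluding gaps in $A$. One cannot simply argue that $\Psi$ is convex and conclude that its minimum set is an interval: although $Q(t)$ is convex, the potential contribution $U^{\mu_Q}(x_t)$ is \emph{not} convex in $t$, since the sphere geometry enters through factors such as $\sqrt{1-t^2}$ in $|x_t-y|$. This is precisely why the problem must be linearized through the signed-equilibrium density rather than through $\Psi$ itself: the convexity of $Q$ has to be converted, via the mapping properties of the Newtonian integral operator restricted to the sphere, into the single-sign-change property of that density, and carrying out this conversion is the technical heart of the cited theorem. Once it is granted, the monotonicity hypothesis serves only to select the South Pole as the pole of the cap, and the proof is complete.
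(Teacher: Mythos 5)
Your proposal is correct and ultimately takes the same route as the paper: the paper offers no independent argument for Proposition \ref{scssp}, stating only that it is a consequence of a more general statement proved in \cite{bds1}, which is precisely the theorem your proof rests on. The scaffolding you supply — uniqueness of $\mu_Q$ from strict positive-definiteness of the Newtonian energy, hence rotational invariance and reduction to a one-dimensional coincidence set in the altitude variable, together with your candid observation that the connectedness (single-cap) property is the technical heart carried entirely by the cited result — is sound, but it is all subsumed in the citation, exactly as in the paper.
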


The following result is an important step towards the recovery of the equilibrium measure.
\begin{theorem}\label{theo1}
Suppose that an external field $Q$ is rotationally invariant with respect to rotations about the polar axis, and is such that $Q\in C^2(N)$, where $N$ is an open neighborhood of $S_Q=C_{S,\alpha}$ on the sphere $\S^{d-1}$. 
Then the equilibrium measure $\mu_Q$ is absolutely continuous with respect to the Lebesgue 
surface measure, with a locally bounded density, that is $d\mu_Q =  f(\theta_1)\,d\sigma_d$, where $f\in L^\infty([\alpha,\pi])$.
\end{theorem}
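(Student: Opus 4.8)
The plan is to convert the regularity statement into a statement about the normal derivatives of a single-layer potential. Since $Q$ is continuous, combining \eqref{var1} and \eqref{var2} (as in the remark following \eqref{var2}) gives equality on the support, $U^{\mu_Q}(x) = F_Q - Q(x)$ for every $x \in S_Q = C_{S,\alpha}$, and this right-hand side is $C^2$ along $\S^{d-1}$ on the neighborhood $N$. By uniqueness of the extremal measure together with the rotational invariance of both $Q$ and the cap, $\mu_Q$ is itself invariant under rotations about the polar axis, so it suffices to produce a density depending only on $\theta_1$. I would then view $u := U^{\mu_Q}$ as a Newtonian single-layer potential on $\R^d$: it is harmonic off $\supp\mu_Q \subset \S^{d-1}$, it is continuous on $\R^d$ (by the Evans--Vasilesco continuity principle, since $u$ coincides with the continuous function $F_Q - Q$ on $S_Q$), and in the distributional sense $\Delta u = -(d-2)\,\omega_d\,\mu_Q$. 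Consequently $\mu_Q$ is absolutely continuous with density $f$ precisely when the jump of the normal derivative of $u$ across $\S^{d-1}$ is an $L^\infty$ function, in which case
\[
f = -\frac{1}{(d-2)\,\omega_d}\left(\partial_n u_+ - \partial_n u_-\right),
\]
where $u_-$, $u_+$ denote the restrictions of $u$ to the open unit ball and its exterior and $n$ is the outward normal. Proving this jump is bounded is the entire content of the theorem.

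First I would treat the relative interior of the cap, where $u$ restricted to $\S^{d-1}$ equals the $C^2$ function $F_Q - Q$. Solving the interior Dirichlet problem in the unit ball and the exterior Dirichlet problem (with decay at infinity, legitimate for $d \geq 3$) with this boundary trace, and invoking the local boundary Schauder estimates, I obtain that $u_-$ and $u_+$ extend $C^{1,\beta}$ up to $\S^{d-1}$ on compact subsets of the open cap. Hence both one-sided normal derivatives are continuous there, their jump is continuous, and $f$ is continuous --- in particular locally bounded --- on the relative interior of $C_{S,\alpha}$. On the open complement of the closed cap no charge is present, so $u$ is harmonic in a full neighborhood of those points and $f \equiv 0$ there. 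This localizes the entire difficulty to the behavior at the edge circle $\{\theta_1 = \alpha\}$.

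The edge is the main obstacle, and it is where the one-sided variational inequality, not merely the equality, must be used. The boundary trace of $u$ is smooth when approached from inside the cap but is only implicitly determined from the uncharged side, so a priori the Dirichlet-to-Neumann map could produce a normal-derivative jump blowing up like the reciprocal square root of the distance to the edge --- exactly the edge behavior of the unweighted equilibrium (Robin) measure of a cap. The point is that such a blow-up is incompatible with \eqref{var1}: the condition $U^{\mu_Q} + Q \geq F_Q$ just outside $C_{S,\alpha}$, together with $C_{S,\alpha}$ being exactly $S_Q$, forces the ``soft-edge'' alternative in which the density vanishes like a square root of the distance to the edge and therefore stays bounded as $\theta_1 \to \alpha^+$. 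I would make this rigorous by using the rotational symmetry to pass to a one-dimensional reformulation: integrating the Newtonian kernel over the latitude spheres turns $U^{\mu_Q}(\theta) = F_Q - Q(\theta)$ into an integral equation on $[\alpha,\pi]$ whose kernel is expressible through the Gauss hypergeometric function, and recasting it as a generalized Abel equation whose inversion is explicit. The inversion exhibits $f$ as a bounded function on $[\alpha,\pi]$, with the admissible square-root edge behavior selected precisely by the one-sided constraint \eqref{var1}, which closes the argument and yields $f \in L^\infty([\alpha,\pi])$.
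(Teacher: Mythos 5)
Your reduction of the theorem to boundedness of the normal-derivative jump of $U^{\mu_Q}$ across the sphere is sound---it is precisely what the paper establishes in Lemma \ref{meas-recov-lemma} and formula \eqref{meas-recov}---and on compact subsets of the \emph{open} cap your interior/exterior Dirichlet-problem argument with local boundary Schauder estimates is a legitimate alternative route to local boundedness of the density. The proposal diverges, and breaks down, exactly where you locate ``the entire content of the theorem'': at the edge $\{\theta_1=\alpha\}$.

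The edge argument as written is circular. Recasting $U^{\mu_Q}(\theta)=F_Q-Q(\theta)$ on $[\alpha,\pi]$ as an Abel-type integral equation ``whose inversion is explicit'' requires first writing $d\mu_Q=f(\theta_1)\,d\sigma_d$: the unknown in that integral equation \emph{is} the density $f$, so the equation cannot even be set up unless one already knows that $\mu_Q$ is absolutely continuous with an integrable density. This is exactly the logical order in the paper: Theorem \ref{theo1} is proved first, and the proofs of Theorems \ref{theo2} and \ref{theo3} open by citing it before inverting \eqref{ieec}. To de-circularize your plan you would have to treat the inversion as producing a \emph{candidate} measure $\nu=f^{*}\,d\sigma_d$, verify $f^{*}\geq 0$, unit mass, and the variational inequalities \eqref{var1}--\eqref{var2} for $U^{\nu}+Q$, and then conclude $\nu=\mu_Q$ by uniqueness of the extremal measure; none of this appears in the proposal. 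Moreover, the claim that the inversion ``exhibits $f$ as a bounded function,'' with the soft edge ``selected by \eqref{var1},'' is an intuition rather than a proof: the explicit solution \eqref{equildensSP} contains a term proportional to $(\cos\alpha-\cos\eta)^{-1/2}$, and boundedness at the edge holds only through a cancellation between that term and $F(\eta)$ which occurs for the correct aperture $\alpha$ (compare \eqref{densExamp}, where the two inverse-square-root terms cancel precisely when $\alpha_0$ solves \eqref{eqn-crit-pnts}); your proposal gives no argument for this cancellation. The paper sidesteps the edge difficulty by a different idea that your proof is missing: it extends $Q$ to a field $\widetilde{Q}\in C^{2}(\S^{d-1})$ with $\mu_{\widetilde{Q}}=\mu_Q$ (via Theorem 4.2.14 of \cite{bhs}), so that the same measure solves an equilibrium problem on the smooth \emph{closed} surface $\S^{d-1}$ with no boundary, and then invokes G\"otz's regularity theorem \cite{gotz} to get Lipschitz continuity of the potential in a full neighborhood of $\S^{d-1}$---uniformly, edge included---after which the jump formula \eqref{meas-recov} immediately yields a bounded density.
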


The support $S_Q$ is a main ingredient in determining the equilibrium measure $\mu_Q$ itself. Indeed, if 
$S_Q$ is known, then the equilibrium measure $\mu_Q$ can be recovered by solving the singular integral equation
\begin{equation}\label{ieec}
\int \frac{1}{|x-y|^{d-2}}\,d\mu(y)+Q(x)=F_Q, \quad x\in S_Q,
\end{equation}
where $F_Q$ is a constant (see (\ref{var2})).

We solve this equation and obtain the following two theorems, that describe explicitly the 
equilibrium density when support $S_Q$ is either a spherical cap $C_{N,\alpha}$, centered at the North Pole, or $C_{S,\alpha}$, a spherical
cap centered at the South Pole. This extends the corresponding results stated in Theorem 2 and Theorem 3 in \cite{bilo}, for the case $d=3$.
\begin{theorem}\label{theo2}
Suppose that an external field $Q$ is rotationally invariant with respect to rotations about the polar axis, and is such that $Q\in C^2(N)$, where $N$ is an open neighborhood of $S_Q$ in $\S^{d-1}$. Assume that $S_Q$ is a spherical
cap $C_{N,\alpha}$ centered at the North Pole, with $0 < \alpha \leq \pi$. Let
\begin{equation}\label{auxfNP}
F(\eta) =  \frac  { \Gamma((d-2)/2)} {2\, \pi^{(d+2)/2}}  \, \frac {1} {\sin \eta} \,\sec^{d-3}\bigg(\frac{\eta}{2}\bigg) \, \frac {d} {d\eta} \int_\eta^\alpha \frac {g(\zeta) \sin \zeta \, d\zeta} {\sqrt{\cos \eta - \cos \zeta}}, \quad 0  \leq \eta \leq \alpha,
\end{equation}
where
\begin{equation}\label{auxsNP}
g(\zeta) = \cot^{d-3}\bigg(\frac{\zeta}{2}\bigg)\, \frac {d} {d\zeta} \int_0^\zeta \frac {Q(\theta)\,  \sin^{d-3}(\theta/2) \sin\theta \, d\theta} {\sqrt{\cos\theta - \cos \zeta}}, \quad 0 \leq \zeta \leq \alpha.
\end{equation}
Then the density $f$ of the equilibrium measure $\mu_Q$ is given by
\begin{equation}\label{equildensNP}
f(\eta)  =   C_Q  \bigg(\frac{1+\cos\alpha}{1+\cos\eta} \bigg)^{\frac{d-1}{2}}  \bigg(\frac{1+\cos\alpha}{\cos\eta-\cos\alpha} \bigg)^{\frac{1}{2}} \, {}_2 F_1\bigg (1, \frac {d-1} {2};\frac {1} {2};  \frac{\cos\eta-\cos\alpha}{1+\cos\eta} \bigg) + F(\eta), \quad 0 \leq \eta \leq \alpha.
\end{equation}
The constant $C_Q$ is uniquely defined by
\begin{equation}\label{constCQNP}
C_Q  =  \frac {\Gamma(d/2-1)} {2^{d-2}\, \sqrt{\pi}\, \Gamma((d-1)/2) }  \, \bigg(\Beta\bigg(\sin^2\bigg(\frac {\alpha} {2} \bigg); \frac{d-2}{2}, \frac{d}{2} \bigg) \bigg)^{-1} \,  \bigg\{  \frac{\Gamma((d-1)/2)}{2\pi^{(d-1)/2}} -  \int_0^\alpha F(\eta) \, \sin^{d-2}\eta \, d\eta \bigg\}.
\end{equation}
\end{theorem}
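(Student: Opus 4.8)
The plan is to convert the equilibrium condition (\ref{ieec}) into a one-dimensional singular integral equation in the polar angle and then to solve it by a double Abel inversion, reading off the particular and homogeneous parts of the solution separately. Throughout I may take for granted, via Theorem \ref{theo1}, that $\mu_Q$ has a density $f$ which is bounded on $[\alpha,\pi]$ (here on the cap $[0,\alpha]$), so that all the integrals below are legitimate and the inversions are performed on an honest $L^\infty$ density.

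First I would use the rotational symmetry. Since $Q$ and $\mu_Q$ are both invariant under rotations about the polar axis, I write $d\mu_Q = f(\theta_1)\,d\sigma_d$ and integrate the Newtonian kernel $|x-y|^{-(d-2)}=(2-2\cos\gamma)^{-(d-2)/2}$ over every angular variable of $y$ except its polar angle $\zeta$. Averaging over the latitude $(d-2)$-sphere reduces (\ref{ieec}) to
\[
\int_0^\alpha K(\eta,\zeta)\, f(\zeta)\, \sin^{d-2}\zeta \, d\zeta + Q(\eta) = F_Q, \qquad 0 \le \eta \le \alpha,
\]
where the reduced kernel $K(\eta,\zeta)$ depends only on the two polar angles and can be recorded in closed form as a Gauss hypergeometric function of $\cos\eta,\cos\zeta$. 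The constant $1/\omega_{d-1}=\Gamma((d-1)/2)/(2\pi^{(d-1)/2})$, which appears in (\ref{constCQNP}), enters precisely as the normalization from this azimuthal averaging.

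The crucial step, and the one I expect to be the main obstacle, is to represent this reduced kernel as a composition of two Abel operators. Using an integral identity of Mehler--Dirichlet type, $K(\eta,\zeta)$ factors through an auxiliary angle $s$ as a superposition of the square-root kernels $(\cos\eta-\cos s)^{-1/2}$ and $(\cos s-\cos\zeta)^{-1/2}$, dressed with elementary powers of $\sin(s/2)$ and $\cos(s/2)$. The difficulty is in choosing the substitutions — essentially $t=\sin^2(\theta/2)$ together with the half-angle identities visible in (\ref{auxsNP}) and (\ref{auxfNP}) — that expose these singularities cleanly and of opposite orientation (one Abel transform integrating from the pole, $\int_0^\zeta$, the other toward the cap edge, $\int_\eta^\alpha$). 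Once the factorization $K\!f = \mathcal{A}_1\mathcal{A}_2 f$ is in place, the reduced equation reads $\mathcal{A}_1\mathcal{A}_2 f = F_Q - Q$.

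With the factorization established, I would invert the two Abel operators in turn by the classical formula (differentiation of an integral against $(\cos\theta-\cos\zeta)^{-1/2}$). The first inversion, applied to $F_Q-Q$, produces the intermediate quantity $g$ of (\ref{auxsNP}) up to the contribution of the constant $F_Q$; the second inversion then yields $f$. The part carried by $Q$ assembles into the term $F(\eta)$ of (\ref{auxfNP}), while the inverse Abel transform of the constant $F_Q$ together with the endpoint homogeneous solution — singular like $(\cos\eta-\cos\alpha)^{-1/2}$, i.e. the unbalanced equilibrium contribution of the bare cap — combines into the hypergeometric term of (\ref{equildensNP}); here I identify an integral of the form $\int(\cos\eta-\cos\zeta)^{-1/2}(\cdots)\,d\zeta$ with the series (\ref{gausshyperdef}) to recognize the ${}_2F_1$. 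Since $F_Q$ is a priori unknown, I write the coefficient of this term as $C_Q$ and fix it last. Finally, $C_Q$ is determined by the mass constraint $\int_0^\alpha f(\zeta)\,\sin^{d-2}\zeta\,d\zeta = 1/\omega_{d-1}$: integrating (\ref{equildensNP}) against the surface measure, evaluating the mass of the hypergeometric part through the incomplete Beta function (\ref{betafdef}) after the change of variable $t=\sin^2(\zeta/2)$, and subtracting the mass $\int_0^\alpha F(\eta)\sin^{d-2}\eta\,d\eta$ contributed by $F$, I arrive at (\ref{constCQNP}). The remaining technical care is the justification of differentiation under the integral sign at each Abel inversion and the control of $f$ near $\eta=\alpha$, both guaranteed by the regularity in Theorem \ref{theo1}.
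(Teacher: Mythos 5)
Your proposal follows essentially the same route as the paper's proof: an azimuthal (Funk--Hecke) reduction of (\ref{ieec}) to a one-dimensional equation in the polar angle, factorization of the reduced kernel into a composition of two Abel operators (the paper does this via a generalization of Copson's identity, Lemma \ref{copgen}, rather than a Mehler--Dirichlet formula, but the substance is identical), double Abel inversion separating the $Q$-contribution $F(\eta)$ from the constant-$F_Q$ contribution that assembles into the ${}_2F_1$ term, and finally fixing $C_Q$ by the unit-mass constraint. The only differences are computational details (the paper needs Lemma \ref{lemmabds} and contiguous-function identities to put the homogeneous part and its mass in the stated closed form), so no substantive gap.
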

\noindent An analogous statement for the support being a spherical cap $C_{S,\alpha}$ centered at the South Pole, 
is of the following nature.
\begin{theorem}\label{theo3}
Suppose that an external field $Q$ is rotationally invariant with respect to rotations about the polar axis, and is such that $Q\in C^2(N)$, where $N$ is an open neighborhood of $S_Q$ in $\S^{d-1}$. Assume that $S_Q$ is a spherical
cap $C_{S,\alpha}$, centered at the South Pole, with $0 < \alpha \leq \pi$. Let
\begin{equation}\label{auxfSP}
F(\eta) =  \frac  { \Gamma((d-2)/2)} {2\, \pi^{(d+2)/2}}  \, \frac {1} {\sin\eta } \,\csc^{d-3}\bigg(\frac{\eta}{2}\bigg) \, \frac {d} {d\eta} \int_\alpha^{\eta}\frac {g(\zeta) \sin \zeta \, d\zeta} {\sqrt{\cos\zeta - \cos \eta}}, \quad \alpha \leq \eta \leq \pi,
\end{equation}
where
\begin{equation}\label{auxsSP}
g(\zeta) = \tan^{d-3}\bigg(\frac{\zeta}{2}\bigg)\, \frac {d} {d\zeta} \int_\zeta^\pi \frac {Q(\theta)\,  \cos^{d-3}(\theta/2) \sin\theta \, d\theta} {\sqrt{\cos \zeta-\cos\theta}}, \quad \alpha \leq \zeta \leq \pi.
\end{equation}
Then the density $f$ of the equilibrium measure $\mu_Q$ is given by
\begin{equation}\label{equildensSP}
f(\eta)  =   C_Q  \bigg(\frac{1-\cos\alpha}{1-\cos\eta} \bigg)^{\frac{d-1}{2}}  \bigg(\frac{1-\cos\alpha}{\cos\alpha-\cos\eta} \bigg)^{\frac{1}{2}} \, {}_2 F_1\bigg (1, \frac {d-1} {2};\frac {1} {2};  \frac{\cos\alpha-\cos\eta}{1-\cos\eta} \bigg) + F(\eta), \quad \alpha \leq \eta \leq \pi.
\end{equation}
The constant $C_Q$ is uniquely determined by
\begin{equation}\label{constCQSP}
C_Q  = \frac {\Gamma(d/2-1)} {2^{d-2}\, \sqrt{\pi} \, \Gamma((d-1)/2) } \, \bigg(\Beta\bigg(\cos^2\bigg(\frac {\alpha} {2} \bigg); \frac{d-2}{2}, \frac{d}{2} \bigg) \bigg)^{-1} \,  \bigg\{  \frac{\Gamma((d-1)/2)}{2\pi^{(d-1)/2}}  - \int_\alpha^\pi F(\eta) \, \sin^{d-2}\eta \, d\eta \bigg\}.
\end{equation}
\end{theorem}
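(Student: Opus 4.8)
The plan is to derive Theorem~\ref{theo3} from Theorem~\ref{theo2} by exploiting the reflection symmetry of the sphere across the hyperplane $x_1=0$. Let $R\colon\R^d\to\R^d$ be the reflection $R(x_1,x_2,\dots,x_d)=(-x_1,x_2,\dots,x_d)$. Since $R$ is a Euclidean isometry, it preserves the Newtonian kernel $1/|x-y|^{d-2}$, hence $I(R_\#\mu)=I(\mu)$ for every $\mu$; it carries $\S^{d-1}$ onto itself and acts in angular terms by $\theta_1\mapsto\pi-\theta_1$, so it maps the South cap $C_{S,\alpha}$ onto the North cap $C_{N,\pi-\alpha}$. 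First I would set $\widetilde Q:=Q\circ R$, which is again rotationally invariant about the polar axis and lies in $C^2$ of a neighbourhood of $C_{N,\pi-\alpha}$, so the hypotheses of Theorem~\ref{theo2} hold with cap angle $\beta:=\pi-\alpha$. Because $I_Q(\mu)=I_{\widetilde Q}(R_\#\mu)$ and $R_\#$ is a bijection of $\mathcal{M}(C_{S,\alpha})$ onto $\mathcal{M}(C_{N,\beta})$, the pushforward $R_\#\mu_Q$ is the extremal measure for $\widetilde Q$, its support is $C_{N,\beta}$, and its density is $\widetilde f(\eta)=f(\pi-\eta)$; equivalently, the integral equation (\ref{ieec}) for $(Q,C_{S,\alpha})$ transforms into the one for $(\widetilde Q,C_{N,\beta})$ with the same constant $F_Q$.

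Next I would apply Theorem~\ref{theo2} to $(\widetilde Q,C_{N,\beta})$ to get a closed form for $\widetilde f$ and read off $f(\eta)=\widetilde f(\pi-\eta)$. The bulk of the work is to verify that the substitution $\theta\mapsto\pi-\theta$, $\zeta\mapsto\pi-\zeta$, $\eta\mapsto\pi-\eta$, $\alpha\mapsto\pi-\alpha$ turns the North-pole formulas (\ref{auxfNP})--(\ref{constCQNP}) into the South-pole formulas (\ref{auxfSP})--(\ref{constCQSP}). This is a direct computation using the half-angle identities $\sin((\pi-\theta)/2)=\cos(\theta/2)$, $\cos((\pi-\theta)/2)=\sin(\theta/2)$, $\cot((\pi-\theta)/2)=\tan(\theta/2)$, $\sec((\pi-\eta)/2)=\csc(\eta/2)$, together with $\cos(\pi-\theta)=-\cos\theta$. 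For instance the hypergeometric argument transforms as $\tfrac{\cos\eta-\cos\alpha}{1+\cos\eta}\mapsto\tfrac{\cos\alpha-\cos\eta}{1-\cos\eta}$, the ratios $\tfrac{1+\cos\alpha}{1+\cos\eta}$, $\tfrac{1+\cos\alpha}{\cos\eta-\cos\alpha}$ become $\tfrac{1-\cos\alpha}{1-\cos\eta}$, $\tfrac{1-\cos\alpha}{\cos\alpha-\cos\eta}$, and $\Beta(\sin^2(\beta/2);\cdot)=\Beta(\cos^2(\alpha/2);\cdot)$. One also checks that the integration ranges $[0,\zeta]$ and $[\eta,\alpha]$ reflect to $[\zeta,\pi]$ and $[\alpha,\eta]$, and that the mass-normalization constant $\tfrac{\Gamma((d-1)/2)}{2\pi^{(d-1)/2}}$ and all $d$-dependent $\Gamma$-prefactors are invariant. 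Term-by-term matching then yields (\ref{equildensSP}) and (\ref{constCQSP}).

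Alternatively, one can prove Theorem~\ref{theo3} directly, mirroring the $d=3$ argument of \cite{bilo}; this is also the route underlying Theorem~\ref{theo2}. Here I would first use rotational symmetry to reduce (\ref{ieec}) to a one-dimensional equation for $f(\zeta)$ on $[\alpha,\pi]$ by averaging the Newtonian kernel over the $(d-2)$ non-polar angles, producing a reduced kernel $\bar K(\cos\eta,\cos\zeta)$. The decisive step is to represent $\bar K$, via a Mehler--Dirichlet / Gegenbauer integral identity, as a composition of two Abel (half-order fractional) operators, so that (\ref{ieec}) is solved by two successive Abel inversions: one producing the intermediate function $g$ of (\ref{auxsSP}) from $Q$, and one producing $F$ of (\ref{auxfSP}) from $g$. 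The general solution carries a free additive multiple of the homogeneous solution, namely the field-free equilibrium density of the cap, which supplies the hypergeometric term with coefficient $C_Q$; the constant is then pinned down by the normalization $\int_\alpha^\pi f(\eta)\sin^{d-2}\eta\,d\eta=\Gamma((d-1)/2)/(2\pi^{(d-1)/2})$ expressing total mass one (the factor $\omega_{d-1}^{-1}$ from integrating out the lower angles), with the incomplete Beta function in (\ref{constCQSP}) arising when the homogeneous part is integrated against $\sin^{d-2}\eta$. Theorem~\ref{theo1} guarantees a priori that $\mu_Q$ is absolutely continuous with bounded density, which legitimizes the inversion and gives uniqueness. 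On the symmetry route there is essentially no analytic obstacle, the hard work being delegated to Theorem~\ref{theo2} and only careful bookkeeping remaining; on the direct route the main obstacle is precisely this kernel factorization, that is, producing the exact Gegenbauer integral representation exhibiting $\bar K$ as a composition of two half-order Abel operators and correctly tracking the resulting $\Gamma$-factors and half-angle weights, after which the remaining inversions are classical.
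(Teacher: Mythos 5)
Your proposal is correct and follows essentially the same route as the paper: the paper also proves Theorem~\ref{theo3} by the substitution $\widetilde{\theta}=\pi-\theta$ (your reflection $R$), which turns the integral equation for $(Q,C_{S,\alpha})$ into one of the form $(\ref{ie1})$ for $(Q\circ R, C_{N,\pi-\alpha})$ with the same constant $F_Q$, applies Theorem~\ref{theo2}, and then reverts via $\eta=\pi-\widetilde{\eta}$ using the half-angle identities you list. The only cosmetic difference is that the paper performs the reflection at the level of the integral equation rather than as a pushforward of measures, and it never needs your alternative direct (double Abel inversion) argument, since that work is already contained in the proof of Theorem~\ref{theo2}.
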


% ----------------- Section ' Applications ' ---------------------------------- Section ' Applications ' ---------------------------------- Section ' Applications ' ---------------------------------- Section ' Applications ' -----------------

\section{Applications}
Following the approach developed in \cite{bilo}, we first consider the case of no external field, when the support is a spherical 
cap centered at the South Pole, that is $S_Q=C_{S,\alpha}$, $Q=0$. The equilibrium measure for 
the spherical cap centered at the South Pole $C_{S,\alpha}$, for the case of general Riesz potentials, was first 
found in \cite{ddss} (see also \cite{bds1}).

\begin{proposition}
The density of the equilibrium measure of a spherical cap $C_{S,\alpha}$ with no external field is
\begin{empheq}{align}\label{equildensSP-no-field}
f(\eta)  =    \frac {\Gamma(d/2-1)} {2^{d-1}\, \pi^{d/2}} \,  \bigg(\Beta\bigg(\cos^2\bigg(\frac {\alpha} {2} \bigg); \frac{d-2}{2}, & \frac{d}{2} \bigg) \bigg)^{-1} \,  \bigg(\frac{1-\cos\alpha}{1-\cos\eta} \bigg)^{\frac{d-1}{2}}  \bigg(\frac{1-\cos\alpha}{\cos\alpha-\cos\eta} \bigg)^{\frac{1}{2}} \\ \nonumber
& \times {}_2 F_1\bigg (1, \frac {d-1} {2};\frac {1} {2};  \frac{\cos\alpha-\cos\eta}{1-\cos\eta} \bigg), \quad \alpha \leq \eta \leq \pi.
\end{empheq}
The capacity of $C_{S,\alpha}$ is given by
\begin{equation}\label{capSP}
\cp(C_{S,\alpha}) = \frac {2^{d-2} \, \Gamma((d-1)/2)} {\sqrt{\pi} \, \Gamma(d/2-1)}  \,  \Beta\bigg(\cos^2\bigg(\frac {\alpha} {2} \bigg); \frac{d-2}{2}, \frac{d}{2} \bigg).
\end{equation}
\end{proposition}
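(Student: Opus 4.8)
The plan is to obtain both assertions by specializing Theorem \ref{theo3} to the field-free case $Q\equiv 0$, and then to convert the resulting equilibrium data into the capacity. First I would establish the density (\ref{equildensSP-no-field}). Setting $Q\equiv 0$ in (\ref{auxsSP}), the inner integral vanishes identically, so $g\equiv 0$; consequently the integrand in (\ref{auxfSP}) is zero and $F\equiv 0$ on $[\alpha,\pi]$. Then the term $\int_\alpha^\pi F(\eta)\sin^{d-2}\eta\,d\eta$ in (\ref{constCQSP}) drops out, and after cancelling the factor $\Gamma((d-1)/2)$ and using $\sqrt{\pi}\,\pi^{(d-1)/2}=\pi^{d/2}$ the expression for the constant collapses to
\begin{equation*}
C_Q = \frac{\Gamma(d/2-1)}{2^{d-1}\pi^{d/2}}\,\bigg(\Beta\bigg(\cos^2\bigg(\frac{\alpha}{2}\bigg);\frac{d-2}{2},\frac{d}{2}\bigg)\bigg)^{-1},
\end{equation*}
which is precisely the prefactor in (\ref{equildensSP-no-field}). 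Substituting this $C_Q$ and $F\equiv 0$ into (\ref{equildensSP}) then reproduces (\ref{equildensSP-no-field}) verbatim.

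For the capacity I would use that with $Q\equiv 0$ the weighted problem reduces to the classical one: $V_Q=W(C_{S,\alpha})$ and $F_Q=V_Q-\int Q\,d\mu_Q=W(C_{S,\alpha})$, so that $\cp(C_{S,\alpha})=1/F_Q$. It remains to extract $F_Q$. Since (\ref{ieec}) with $Q\equiv 0$ reads $U^{\mu}(x)=F_Q$ for $x\in C_{S,\alpha}$ and is linear in the pair (density, right-hand side), the homogeneous solution produced by the Abel-type inversion in the proof of Theorem \ref{theo3} is exactly $F_Q$ times a fixed factor $\kappa=\kappa(d)$; hence $C_Q=\kappa F_Q$ and $\cp(C_{S,\alpha})=1/F_Q=\kappa/C_Q$. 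Tracking that constant through the inversion gives $\kappa=\Gamma((d-1)/2)/(2\pi^{(d+1)/2})$, and inserting the value of $C_Q$ found above yields (\ref{capSP}) after cancellation. Equivalently, and self-containedly, one may evaluate the constant potential at the South Pole $x_0=(-1,0,\ldots,0)\in C_{S,\alpha}$, where $|x_0-y|=2\cos(\eta/2)$ for $y$ at polar angle $\eta$, reducing $F_Q=U^{\mu}(x_0)$ to a single hypergeometric integral against the density (\ref{equildensSP-no-field}).

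The hard part is the bookkeeping of dimensional constants: on either route the crux is to reduce the relevant integral --- the normalization integral $\int_\alpha^\pi h(\eta)\sin^{d-2}\eta\,d\eta$ of the homogeneous density $h$ (the bracketed factor of $C_Q$ in (\ref{equildensSP-no-field})), or the South-Pole potential integral --- to the incomplete Beta function $\Beta(\cos^2(\alpha/2);(d-2)/2,d/2)$, and to match every power of $2$ and $\pi$ together with the $\Gamma$-factors. As a consistency check I would test the endpoint $\alpha=0$, where $C_{S,0}=\S^{d-1}$: there $\Beta(1;(d-2)/2,d/2)=\Gamma((d-2)/2)\Gamma(d/2)/\Gamma(d-1)$, and the Legendre duplication formula collapses (\ref{capSP}) to $\cp(\S^{d-1})=1$, in agreement with Newton's theorem that the uniform measure on the unit sphere has Newtonian potential identically equal to $1$ on $\S^{d-1}$.
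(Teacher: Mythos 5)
Your proposal is correct and takes essentially the same route as the paper: the Proposition is exactly Theorem \ref{theo3} specialized to $Q\equiv 0$, so that $g\equiv 0$, $F\equiv 0$, and $C_Q$ collapses to the stated prefactor $\frac{\Gamma(d/2-1)}{2^{d-1}\pi^{d/2}}\big(\Beta(\cos^2(\alpha/2);\tfrac{d-2}{2},\tfrac{d}{2})\big)^{-1}$. Your capacity argument via $\cp(C_{S,\alpha})=1/F_Q$ together with the proportionality $C_Q=\frac{\Gamma((d-1)/2)}{2\pi^{(d+1)/2}}\,F_Q$ is also exactly the relation implicit in the paper (compare (\ref{robinconstNP}) with (\ref{constCQNP}) in the proof of Theorem \ref{theo2}), and your endpoint check $\cp(\S^{d-1})=1$ at $\alpha=0$ confirms the constants.
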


Suppose now that the sphere $\S^{d-1}$ is immersed in an external field $Q$, that satisfies the conditions of 
Proposition \ref{scssp}. Then the support $S_Q$ of the weighted equilibrium measure $\mu_Q$ will be 
a spherical cap $C_{S,\alpha}$, centered at the South Pole. The angle $\alpha$, defining the extremal support $C_{S,\alpha}$, can be found 
via the Newtonian analog of the Mhaskar-Saff $\cF$-functional, which is defined as follows.
\begin{definition}\label{msfdef}
The  $\cF$-functional of a compact subset $E\subset \S^{d-1}$ of positive (Newtonian) capacity is defined as
\begin{equation}\label{msff}
\cF(E) := W(E) + \int Q(x)\,d\mu_E(x), 
\end{equation}
where $W(E)$ is the Newtonian energy of the compact $E$ and $\mu_E$ is the equilibrium measure (with no external field) on $E$.
\end{definition}
The main objective of introducing the $\cF$-functional is its following extremal property, proved in \cite{bds1} for the general Riesz potentials.
\begin{proposition}\label{msfmp}
Let $Q$ be an external field on $\S^{d-1}$. Then $\cF$-functional is minimized for $S_Q=\supp(\mu_Q)$. 
\end{proposition}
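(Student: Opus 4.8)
The plan is to establish the extremal property through two inequalities: that $\cF(K)\geq F_Q$ for every compact $K\subset\S^{d-1}$ with $\cp(K)>0$, and that $\cF(S_Q)=F_Q$. Together these show that the minimum of $\cF$ over admissible compacta is attained precisely at $S_Q$. Here $F_Q=V_Q-\int Q\,d\mu_Q$ is the modified Robin constant from the Gauss variational inequalities $(\ref{var1})$--$(\ref{var2})$, and throughout $\mu_Q$ denotes the weighted extremal measure for the problem on the whole sphere.

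First I would record the two potential-theoretic facts to be used. For the unweighted equilibrium measure $\mu_K$ of a compact $K$ with $\cp(K)>0$, Frostman's theorem gives $U^{\mu_K}(x)=W(K)$ for q.e.\ $x\in K$, while the maximum principle for the Newtonian kernel $1/|x-y|^{d-2}$ upgrades the bound from $\supp(\mu_K)$ to all of space, yielding $U^{\mu_K}(x)\leq W(K)$ for every $x$. For the extremal measure $\mu_Q$, inequality $(\ref{var1})$ gives $U^{\mu_Q}+Q\geq F_Q$ q.e.\ on $\S^{d-1}$, and $(\ref{var2})$ gives $U^{\mu_Q}+Q\leq F_Q$ on $S_Q$.

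For the lower bound, fix $K$ and integrate against its unit equilibrium measure $\mu_K$. Since $\mu_K$ has finite energy it charges no set of zero capacity, so the q.e.\ inequality $(\ref{var1})$ integrates to $\int(U^{\mu_Q}+Q)\,d\mu_K\geq F_Q$. Writing $W(K)=\int U^{\mu_K}\,d\mu_K$, using $U^{\mu_K}\leq W(K)$ everywhere together with $\mu_Q(\S^{d-1})=1$ gives $W(K)\geq\int U^{\mu_K}\,d\mu_Q$, and by Fubini (symmetry of the kernel) the right-hand side equals $\int U^{\mu_Q}\,d\mu_K$. Chaining these,
\[
\cF(K)=W(K)+\int Q\,d\mu_K\geq\int\big(U^{\mu_Q}+Q\big)\,d\mu_K\geq F_Q .
\]
Taking $K=S_Q$, I would then check that both inequalities collapse to equalities: the first because $U^{\mu_{S_Q}}=W(S_Q)$ q.e.\ on $S_Q=\supp(\mu_Q)$ and $\mu_Q$ charges no polar set, so the identity holds $\mu_Q$-a.e.; the second because $(\ref{var1})$ and $(\ref{var2})$ force $U^{\mu_Q}+Q=F_Q$ q.e.\ on $S_Q$, hence $\mu_{S_Q}$-a.e. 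This yields $\cF(S_Q)=F_Q\leq\cF(K)$ for all admissible $K$, which is the claim.

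The hard part will not be the algebra of the inequality chain but the potential-theoretic bookkeeping that legitimizes it: the global bound $U^{\mu_K}\leq W(K)$ relies on the maximum principle for the Newtonian kernel, the interchange $\int U^{\mu_K}\,d\mu_Q=\int U^{\mu_Q}\,d\mu_K$ requires Fubini applied to a possibly singular symmetric kernel, and above all the repeated passage from ``quasi-everywhere'' to ``almost everywhere with respect to a finite-energy measure'' is exactly what makes the two equalities at $K=S_Q$ valid; each of these points should be stated explicitly rather than taken for granted.
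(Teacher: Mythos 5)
Your proof is correct, and it is essentially the same argument as the one behind the paper's statement: the paper gives no proof of this proposition at all, citing \cite{bds1}, and the proof there is exactly your chain — integrate the Gauss inequality $(\ref{var1})$ against $\mu_K$, use Fubini together with the maximum-principle bound $U^{\mu_K}\leq W(K)$ to get $\cF(K)\geq F_Q$, and then check both inequalities become equalities for $K=S_Q$ via the q.e.\ identity $U^{\mu_Q}+Q=F_Q$ on $S_Q$ and the fact that finite-energy measures charge no polar sets.
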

If $E=C_{S,\alpha}$, taking into account that $W(C_{S,\alpha})=1/\cp(C_{S,\alpha})$, and inserting $(\ref{capSP})$ and $(\ref{equildensSP-no-field})$ into $(\ref{msff})$, we find that $\cF$-functional is given by
\begin{empheq}{align}\label{msfunc}
 \cF(C_{S,\alpha}) & =  \frac {\sqrt{\pi}\,  \Gamma(d/2-1)} {2^{d-2}\,  \Gamma((d-1)/2)} \, \bigg(\Beta\bigg(\cos^2\bigg(\frac {\alpha} {2} \bigg); \frac{d-2}{2},  \frac{d}{2} \bigg) \bigg)^{-1}  \times \\ \nonumber 
\bigg\{ 1 & + \frac {1} {\pi} \int_\alpha^\pi Q(\eta)\,  \bigg(\frac{1-\cos\alpha}{1-\cos\eta} \bigg)^{\frac{d-1}{2}}  \bigg(\frac{1-\cos\alpha}{\cos\alpha-\cos\eta} \bigg)^{\frac{1}{2}} {}_2 F_1\bigg (1, \frac {d-1} {2};\frac {1} {2};  \frac{\cos\alpha-\cos\eta}{1-\cos\eta} \bigg) \sin^{d-2}\eta\, d\eta \bigg\}.
\end{empheq}

As a first applications of our results, we consider the situation when the sphere $\S^{d-1}$ is immersed in an external field generated by a positive point charge of magnitude $q$ placed at the North Pole of the sphere, namely
\begin{equation}\label{extfieldexam}
Q(x) = q\,(1-x_1)^{-(d-2)/2}, \quad q > 0, \quad x\in\S^{d-1}.
\end{equation}
Note that the extremal measure for such a field was first obtained in \cite{ddss}, for general Riesz potentials. For $d=3$ and when the charges are assumed to interact according to the Newtonian potential, the extremal density and Mhaskar-Saff functional, along with its critical points, were computed in \cite{bilo} (see Proposition 4 and Theorems 4 and 5 there). We also remark that it is possible to extend the results of \cite{bds1} to cover such a case as well.

It is clear that external field $Q$ in $(\ref{extfieldexam})$ is invariant with respect to the rotations about the polar axis. Also, $Q(x_1)$ is a non-negative increasing convex function on $[-1,1]$. From Proposition \ref{scssp} it then follows that the support of the corresponding equilibrium measure $\mu_Q$ will be a spherical cap $C_{S,\alpha}$, centered at the South Pole. 
The crucial step towards the recovery of the equilibrium measure for this external field is to determine the support of the equilibrium measure.  For that we first compute the Mhaskar-Saff $\cF$-functional, by inserting expression $(\ref{extfieldexam})$ for the external field $Q$ into $(\ref{msfunc})$.
\begin{theorem}\label{msfptch}
The $\cF$-functional for the spherical cap $C_{S,\alpha}$ when the external field is produced by a positive point charge of magnitude $q$, placed at the North Pole, is given by
\begin{empheq}{align}\label{msf-p-ch}
 \cF(C_{S,\alpha})  =  \frac {\sqrt{\pi}\,  \Gamma(d/2-1)} {2^{d-2}\,  \Gamma((d-1)/2)} \,   \bigg(\Beta\bigg(\cos^2\bigg(\frac {\alpha} {2} \bigg); & \frac{d-2}{2},  \frac{d}{2} \bigg) \bigg)^{-1}  \times \\ \nonumber
 & \bigg\{ 1 + \frac {q\,  2^{(d-2)/2} \, \Gamma((d-1)/2)} {\sqrt{\pi} \, \Gamma(d/2-1)} \, \Beta \bigg( \cos^2\bigg(\frac {\alpha} {2} \bigg) ; \frac{d-2}{2},\frac{1}{2}\bigg) \bigg\}.
\end{empheq}
\end{theorem}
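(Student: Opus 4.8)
The plan is to insert the external field (\ref{extfieldexam}) directly into the closed-form expression (\ref{msfunc}) for the $\cF$-functional and to evaluate the resulting integral explicitly. Since $x_1=\cos\eta$ on $\S^{d-1}$, the field reads $Q(\eta)=q\,(1-\cos\eta)^{-(d-2)/2}$, so after factoring out the common prefactor $\frac{\sqrt{\pi}\,\Gamma(d/2-1)}{2^{d-2}\Gamma((d-1)/2)}\,\Beta\!\big(\cos^2(\alpha/2);\tfrac{d-2}{2},\tfrac d2\big)^{-1}$ that is shared by (\ref{msfunc}) and (\ref{msf-p-ch}), the entire task reduces to showing that
\[
J:=\frac{q}{\pi}\int_\alpha^\pi (1-\cos\eta)^{-\frac{d-2}{2}}\bigg(\frac{1-\cos\alpha}{1-\cos\eta}\bigg)^{\frac{d-1}{2}}\bigg(\frac{1-\cos\alpha}{\cos\alpha-\cos\eta}\bigg)^{\frac{1}{2}}{}_2F_1\bigg(1,\tfrac{d-1}{2};\tfrac12;\tfrac{\cos\alpha-\cos\eta}{1-\cos\eta}\bigg)\sin^{d-2}\eta\,d\eta
\]
equals $\frac{q\,2^{(d-2)/2}\Gamma((d-1)/2)}{\sqrt{\pi}\,\Gamma(d/2-1)}\,\Beta\!\big(\cos^2(\alpha/2);\tfrac{d-2}{2},\tfrac12\big)$, which is the quantity inside the braces of (\ref{msf-p-ch}).

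The key step is the change of variables $w=(\cos\alpha-\cos\eta)/(1-\cos\eta)$, which sends $\eta\in[\alpha,\pi]$ to $w\in[0,\cos^2(\alpha/2)]$; note that the upper limit is precisely the argument of the target incomplete Beta function. Inverting yields $\cos\eta=(\cos\alpha-w)/(1-w)$, whence $1-\cos\eta=(1-\cos\alpha)/(1-w)$, $\cos\alpha-\cos\eta=w(1-\cos\alpha)/(1-w)$, $\sin^2\eta=(1-\cos\alpha)(1+\cos\alpha-2w)/(1-w)^2$, and $\sin\eta\,d\eta=(1-\cos\alpha)(1-w)^{-2}\,dw$. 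The remarkable feature of this substitution is that all powers of $(1-w)$ cancel and the singular factor coming from $Q$ is absorbed, so that, using $1+\cos\alpha=2\cos^2(\alpha/2)$, the integral collapses to
\[
J=\frac{q}{\pi}\,2^{\frac{d-3}{2}}\,(1-\cos\alpha)^{1/2}\int_0^{\cos^2(\alpha/2)} w^{-1/2}\Big(\cos^2\tfrac{\alpha}{2}-w\Big)^{\frac{d-3}{2}}{}_2F_1\bigg(1,\tfrac{d-1}{2};\tfrac12;w\bigg)\,dw.
\]
I will carry out the elementary bookkeeping of the exponents, which is the one genuinely tedious point, to confirm the cancellation.

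To evaluate the last integral I would expand the hypergeometric factor by its series (\ref{gausshyperdef}), observing that $(1)_n/n!=1$, and integrate term by term; the interchange is legitimate because $\cos^2(\alpha/2)<1$ for $\alpha>0$, so the power series converges uniformly on $[0,\cos^2(\alpha/2)]$. Each term is an elementary Beta integral, $\int_0^b w^{n-1/2}(b-w)^{(d-3)/2}\,dw=b^{\,n+(d-2)/2}\,\Gamma(n+\tfrac12)\Gamma(\tfrac{d-1}{2})/\Gamma(n+\tfrac d2)$, and after using $\Gamma(n+\tfrac12)=(\tfrac12)_n\sqrt{\pi}$ and $\Gamma(n+\tfrac d2)=(\tfrac d2)_n\Gamma(\tfrac d2)$ the resulting series resums to $b^{(d-2)/2}\sqrt{\pi}\,\Gamma(\tfrac{d-1}{2})\Gamma(\tfrac d2)^{-1}\,{}_2F_1(1,\tfrac{d-1}{2};\tfrac d2;b)$ with $b=\cos^2(\alpha/2)$. (Equivalently, one may invoke the standard Euler-type integral $\int_0^1 t^{s-1}(1-t)^{r-s-1}{}_2F_1(a,b;c;xt)\,dt=\mathrm{B}(s,r-s)\,{}_3F_2(a,b,s;c,r;x)$ and observe that the matching $\tfrac12$ in the parameters reduces the ${}_3F_2$ to this ${}_2F_1$.)

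It then remains to convert this hypergeometric expression into the incomplete Beta function appearing in (\ref{msf-p-ch}). For this I would combine two classical identities: the incomplete-Beta representation $\Beta(b;p,q)=p^{-1}b^{p}\,{}_2F_1(p,1-q;p+1;b)$, applied with $p=(d-2)/2$ and $q=1/2$, and Euler's transformation ${}_2F_1(a,b;c;z)=(1-z)^{c-a-b}{}_2F_1(c-a,c-b;c;z)$ applied to ${}_2F_1(\tfrac{d-2}{2},\tfrac12;\tfrac d2;b)$, whose exponent $c-a-b$ equals $\tfrac12$. Together these give the needed identity $(1-b)^{1/2}\,{}_2F_1(1,\tfrac{d-1}{2};\tfrac d2;b)=b^{-(d-2)/2}(\tfrac d2-1)\,\Beta(b;\tfrac{d-2}{2},\tfrac12)$; inserting $(1-\cos\alpha)^{1/2}=\sqrt2\,(1-b)^{1/2}$ and $\Gamma(\tfrac d2)/\Gamma(\tfrac d2-1)=\tfrac d2-1$ then matches all constants and produces (\ref{msf-p-ch}). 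The main obstacle is engineering the substitution in the second step so that it simultaneously rationalizes the hypergeometric argument, cancels the $(1-w)$ powers, and translates the domain to $[0,\cos^2(\alpha/2)]$; once that collapse is secured, the remaining work is the routine term-by-term integration and the two hypergeometric identities.
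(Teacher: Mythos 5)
Your proposal is correct and follows essentially the same route as the paper's proof: your single substitution $w=(\cos\alpha-\cos\eta)/(1-\cos\eta)$ is just the composition of the paper's change of variables $u=\cos\eta$ with its Lemma A.1–style substitution $(b-u)/(c-u)=xv$ (up to the rescaling $w=xv$), and both arguments then expand the hypergeometric factor, integrate term by term to reach ${}_2F_1\big(1,\tfrac{d-1}{2};\tfrac d2;\cos^2(\alpha/2)\big)$, and convert to the incomplete Beta function. The only cosmetic difference is that you derive the conversion identity from Euler's transformation and the standard incomplete-Beta representation, whereas the paper simply invokes its formula (\ref{hypergeom-to-beta}).
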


\noindent Applying Theorem \ref{theo3}, we compute the density of the equilibrium measure, corresponding to this external field. 
\begin{theorem}\label{densexamtheo}
For the external field $Q$ given by $(\ref{extfieldexam})$, the support $S_Q$ is a spherical cap $C_{S,\alpha}$ centered at the South Pole, with $\alpha=\alpha_0\in(0,\pi)$. The 
angle $\alpha_0$ is defined as a unique solution of the equation
\begin{equation}\label{eqn-crit-pnts}
 \csc^{d-1}\bigg(\frac {\alpha} {2} \bigg) \, \Beta\bigg(\cos^2\bigg(\frac {\alpha} {2} \bigg);  \frac{d-2}{2},  \frac{d}{2} \bigg) - \Beta\bigg(\cos^2\bigg(\frac {\alpha} {2} \bigg);  \frac{d-2}{2},  \frac{1}{2} \bigg) = \frac  {\sqrt{\pi}\,  \Gamma(d/2-1)} {q\,2^{(d-2)/2}\, \Gamma((d-1)/2)}.
\end{equation}
Let 
\begin{empheq}{align}\label{CQ-val-theo}
C_Q  = \frac {\Gamma(d/2-1)} {2^{d-1}\, \pi^{d/2}} \, \bigg(\Beta\bigg(\cos^2\bigg(\frac{\alpha_0}{2}\bigg);  \frac{d-2}{2},   \frac{d}{2}\bigg) & \bigg)^{-1} \times \\ \nonumber
& \bigg\{ 1 + \frac {q \, 2^{(d-2)/2}\, \Gamma((d-1)/2)} {\sqrt{\pi}\, \Gamma(d/2-1)} \,  \Beta\bigg(\cos^2\bigg(\frac{\alpha_0}{2}\bigg); \frac{d-2}{2}, \frac{1}{2}\bigg)  \bigg\}.
\end{empheq}
The density of the equilibrium measure $\mu_Q$ is given by
\begin{empheq}{align}\label{densExamp}
f(\eta)  =   C_Q\,  \bigg(\frac{1-\cos\alpha_0}{1-\cos\eta} \bigg)^{(d-1)/2} \, & \sqrt{\frac{1-\cos\alpha_0}{\cos\alpha_0-\cos\eta}}  \, {}_2 F_1\bigg (1, \frac {d-1} {2};\frac {1} {2};  \frac{\cos\alpha_0-\cos\eta}{1-\cos\eta} \bigg) \\ \nonumber
& - \frac {q\, \Gamma((d-1)/2)}  {\sqrt{2}\,  \pi^{(d+1)/2}}  \, \frac {1} {(1-\cos\eta)^{(d-1)/2}} \, \sqrt{\frac{1-\cos\alpha_0}{\cos\alpha_0-\cos\eta}}, \quad \alpha_0 \leq \eta \leq \pi.
\end{empheq}
\end{theorem}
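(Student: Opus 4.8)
The plan is to establish the three assertions---the defining equation for $\alpha_0$, the value of $C_Q$, and the density formula---in turn, combining the extremal property of the $\cF$-functional with the general inversion machinery of Theorem \ref{theo3}.

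First I would pin down the support. By Proposition \ref{msfmp} the cap $C_{S,\alpha_0}$ is the one minimizing $\cF(C_{S,\alpha})$, so $\alpha_0$ is a critical point of the explicit expression \eqref{msf-p-ch}. Using $\frac{d}{d\alpha}\Beta(\cos^2(\alpha/2);a,b)=-\tfrac12\sin\alpha\,(\cos(\alpha/2))^{2a-2}(\sin(\alpha/2))^{2b-2}$, I would differentiate \eqref{msf-p-ch} and set the derivative to zero; the prefactor $\Beta(\cos^2(\alpha/2);(d-2)/2,d/2)^{-1}$ and the two Beta-derivatives combine, and the ratio of those derivatives collapses to $\csc^{d-1}(\alpha/2)$, so after clearing common factors the stationarity condition is exactly \eqref{eqn-crit-pnts}. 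Existence and uniqueness of $\alpha_0\in(0,\pi)$ would then follow by verifying that the left-hand side of \eqref{eqn-crit-pnts} decreases strictly from $+\infty$ (as $\alpha\to0^+$) to $0$ (as $\alpha\to\pi^-$), the second-order test confirming a minimum.

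Next I would compute the density by applying Theorem \ref{theo3} to $Q(\theta)=q(1-\cos\theta)^{-(d-2)/2}=q\,2^{-(d-2)/2}(\sin(\theta/2))^{-(d-2)}$. The main work is the evaluation of $g(\zeta)$ in \eqref{auxsSP}. The substitution $t=\cos^2(\theta/2)$ reduces the inner integral $\int_\zeta^\pi Q(\theta)\cos^{d-3}(\theta/2)\sin\theta\,(\cos\zeta-\cos\theta)^{-1/2}\,d\theta$ to $\int_0^{w}t^{(d-3)/2}(1-t)^{-(d-2)/2}(w-t)^{-1/2}\,dt$ with $w=\cos^2(\zeta/2)$, which by Euler's integral representation of ${}_2F_1$ equals a constant times $w^{(d-2)/2}\,{}_2F_1((d-2)/2,(d-1)/2;d/2;w)$, i.e. $\tfrac{d-2}{2}$ times an incomplete Beta function. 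Differentiating in $\zeta$ (the Beta-derivative producing $w^{(d-4)/2}(1-w)^{(1-d)/2}$) and multiplying by $\tan^{d-3}(\zeta/2)$, all trigonometric powers cancel and $g(\zeta)=-q\,2^{-(d-3)/2}\,\frac{\sqrt{\pi}\,\Gamma((d-1)/2)}{\Gamma(d/2-1)}\,\csc(\zeta/2)$, a constant multiple of $\csc(\zeta/2)$. This collapse is the crux: with $g$ so simple, $g(\zeta)\sin\zeta$ is proportional to $\cos(\zeta/2)$, so the substitution $u=\sin(\zeta/2)$ turns the outer integral $\int_\alpha^\eta\cos(\zeta/2)(\cos\zeta-\cos\eta)^{-1/2}\,d\zeta$ of \eqref{auxfSP} into an arcsine. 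Differentiating in $\eta$ and simplifying (here $\Gamma((d-2)/2)=\Gamma(d/2-1)$ cancels cleanly) yields $F(\eta)=-\frac{q\,\Gamma((d-1)/2)}{\sqrt2\,\pi^{(d+1)/2}}(1-\cos\eta)^{-(d-1)/2}\sqrt{(1-\cos\alpha_0)/(\cos\alpha_0-\cos\eta)}$, which is precisely the second term of \eqref{densExamp}; the first term is the homogeneous part of \eqref{equildensSP} carried over verbatim.

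Finally I would determine $C_Q$ from \eqref{constCQSP} by evaluating $\int_{\alpha_0}^\pi F(\eta)\sin^{d-2}\eta\,d\eta$. With the $F$ above, the substitution $t=\cos^2(\eta/2)$ reduces this to $\int_0^{w_0}t^{(d-3)/2}(1-t)^{-1}(w_0-t)^{-1/2}\,dt$ with $w_0=\cos^2(\alpha_0/2)$, again an Euler integral giving ${}_2F_1(1,(d-1)/2;d/2;w_0)$; the Euler transformation ${}_2F_1(1,(d-1)/2;d/2;w_0)=(1-w_0)^{-1/2}\,{}_2F_1((d-2)/2,1/2;d/2;w_0)$ converts it into the incomplete Beta function $\Beta(\cos^2(\alpha_0/2);(d-2)/2,1/2)$. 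Substituting back into \eqref{constCQSP} and collecting the Gamma and power-of-two factors produces \eqref{CQ-val-theo}. As a consistency check one may note the shortcut $C_Q=\frac{\Gamma((d-1)/2)}{2\pi^{(d+1)/2}}\,\cF(C_{S,\alpha_0})$, so that \eqref{CQ-val-theo} can also be read directly off Theorem \ref{msfptch}. The principal obstacle throughout is the evaluation of $g(\zeta)$: once its reduction to a constant multiple of $\csc(\zeta/2)$ is secured, every remaining integral for $F$ and for $C_Q$ is either elementary or a standard Euler integral.
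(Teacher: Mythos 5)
Your proposal is correct, and its skeleton coincides with the paper's: stationarity of the $\cF$-functional \eqref{msf-p-ch} yields \eqref{eqn-crit-pnts}; Theorem \ref{theo3} then yields the density once $g(\zeta)$ and $F(\eta)$ are computed; and $C_Q$ comes from \eqref{constCQSP}. The one place where you take a genuinely different (and slicker) route is the crux you yourself identify, the collapse of $g(\zeta)$. The paper first writes the inner integral as a constant times $\cos^{d-2}(\zeta/2)\,{}_2F_1\big(\tfrac{d-2}{2},\tfrac{d-1}{2};\tfrac{d}{2};\cos^2(\zeta/2)\big)$, differentiates the ${}_2F_1$ (producing a sum of two hypergeometric terms), and then needs the linear transformation 15.3.3 and the contiguous relation 15.2.20 of \cite{abram} to reduce that sum to $\sin^{-(d-1)}(\zeta/2)$. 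You instead convert $w^{(d-2)/2}\,{}_2F_1\big(\tfrac{d-2}{2},\tfrac{d-1}{2};\tfrac{d}{2};w\big)$ into $\tfrac{d-2}{2}\,\Beta\big(w;\tfrac{d-2}{2},\tfrac{3-d}{2}\big)$ \emph{before} differentiating, so the derivative is immediate from $\frac{d}{dz}\Beta(z;a,b)=z^{a-1}(1-z)^{b-1}$; your constant agrees with the paper's \eqref{functg-simplf}, since $\Gamma(d/2)=\tfrac{d-2}{2}\,\Gamma(d/2-1)$. (Do record that the second Beta parameter $\tfrac{3-d}{2}$ is negative for $d>3$; this is harmless because the integral defining $\Beta(z;a,b)$ stops at $z<1$, and the identity \eqref{hypergeom-to-beta} persists there, but it deserves a remark.) Two smaller deviations also work in your favor: for uniqueness of $\alpha_0$ the paper simply cites Theorem 13 of \cite{bds1}, while your monotonicity claim is provable in two lines --- differentiating the left-hand side of \eqref{eqn-crit-pnts}, the two $\cos^{d-3}(\alpha/2)$ terms cancel exactly, leaving $-\tfrac{d-1}{2}\,\sin^{-d}(\alpha/2)\cos(\alpha/2)\,\Beta\big(\cos^2(\alpha/2);\tfrac{d-2}{2},\tfrac{d}{2}\big)<0$, so the left-hand side strictly decreases from $+\infty$ (at $\alpha\to0+$) to $0$ (at $\alpha\to\pi-$) and meets the positive constant on the right exactly once. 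Finally, your closing shortcut $C_Q=\tfrac{\Gamma((d-1)/2)}{2\pi^{(d+1)/2}}\,\cF(C_{S,\alpha_0})$, justified by $F_Q=\cF(C_{S,\alpha_0})$ via Proposition \ref{msfmp}, is precisely the device the paper employs in the quadratic-field case (Theorem \ref{density-rational-ext-field}); for the present theorem the paper instead evaluates $\int_{\alpha_0}^\pi F(\eta)\sin^{d-2}\eta\,d\eta$ directly, exactly as your main route does, and the two computations agree.
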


As a second application, we consider the case when the external field $Q$ is a quadratic polynomial of the form
\begin{equation}\label{quadratic-ext-field}
Q(x) = (1+x_1)^2, \quad x\in\S^{d-1}, \quad d\geq 3.
\end{equation}
An external field given by a quadratic polynomial was first considered in \cite{bilo} for the case $d=3$ and Newton potential, see Proposition 5 and Theorems 7 and 8 there. Below we extend the corresponding statements from \cite{bilo} to the arbitrary dimension $d\geq3$, for the monic quadratic polynomial of the form appearing on the right hand side of $(\ref{quadratic-ext-field})$.

It is a straightforward calculation to verify that $Q(x)$ is a nonnegative convex increasing function on $[-1,1]$, also possessing rotational symmetry with respect to rotations about the polar axis. Therefore, by Proposition \ref{scssp}, the support of the equilibrium measure $\mu_Q$ for this external field will be a spherical cap $C_{S,\alpha}$. Following the established procedure, we first compute the Mhaskar-Saff $\cF$-functional, by substituting expression for the external field $(\ref{quadratic-ext-field})$ into $(\ref{msfunc})$.
\begin{theorem}\label{msf-quadratic-ext-field-theo}
In the case of the rational external field $(\ref{quadratic-ext-field})$, the Mhaskar-Saff $\cF$-functional for the spherical cap $C_{S,\alpha}$ is of the form
\begin{empheq}{align}\label{msf-quadratic-ext-field}
 \cF(C_{S,\alpha})  =  \frac {\sqrt{\pi}\,  \Gamma(d/2-1)} {2^{d-2}\,  \Gamma((d-1)/2)} \,  \bigg(\Beta\bigg(\cos^2\bigg(\frac {\alpha} {2} \bigg);  \frac{d}{2}-1, &  \frac{d}{2}  \bigg) \bigg)^{-1} \times \\ \nonumber 
 & \bigg\{ 1  + \frac {2^d\, \Gamma((d+3)/2)} {\sqrt{\pi} \, \Gamma(d/2+1)} \Beta \bigg( \cos^2\bigg(\frac {\alpha} {2} \bigg) ; \frac{d}{2}+1,\frac{d}{2}\bigg)\bigg\}.
\end{empheq}
\end{theorem}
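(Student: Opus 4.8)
The plan is to substitute the quadratic field $(\ref{quadratic-ext-field})$ — which in terms of the polar angle $\eta=\theta_1$ reads $Q(\eta)=(1+\cos\eta)^2$ — directly into the general expression $(\ref{msfunc})$ for the $\cF$-functional. Since $(d-2)/2=d/2-1$, the prefactor of $(\ref{msfunc})$ already coincides with the one claimed in $(\ref{msf-quadratic-ext-field})$, and the constant term $1$ inside the braces is common to both; hence the entire problem reduces to evaluating the weighted integral
\[
\frac{1}{\pi}\int_\alpha^\pi (1+\cos\eta)^2\left(\frac{1-\cos\alpha}{1-\cos\eta}\right)^{\frac{d-1}{2}}\left(\frac{1-\cos\alpha}{\cos\alpha-\cos\eta}\right)^{\frac12}{}_2F_1\left(1,\frac{d-1}{2};\frac12;\frac{\cos\alpha-\cos\eta}{1-\cos\eta}\right)\sin^{d-2}\eta\,d\eta
\]
and showing that it equals $\frac{2^d\,\Gamma((d+3)/2)}{\sqrt{\pi}\,\Gamma(d/2+1)}\,\Beta\big(\cos^2(\alpha/2);\frac{d}{2}+1,\frac{d}{2}\big)$. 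First I would pass to the variable $s=\sin^2(\eta/2)$, with $s_0=\sin^2(\alpha/2)$, under which $1-\cos\eta=2s$, $1+\cos\eta=2(1-s)$, $\cos\alpha-\cos\eta=2(s-s_0)$ and $\sin^{d-2}\eta\,d\eta=2^{d-2}(s(1-s))^{(d-3)/2}\,ds$. Collecting powers, this turns the integral into
\[
\frac{2^d}{\pi}\,s_0^{d/2}\int_{s_0}^1 (1-s)^{\frac{d+1}{2}}\,s^{-1}(s-s_0)^{-\frac12}\,{}_2F_1\left(1,\frac{d-1}{2};\frac12;\frac{s-s_0}{s}\right)ds.
\]

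The crux is to evaluate this last integral in closed form. I would expand the Gauss hypergeometric function via its series $(\ref{gausshyperdef})$, noting that the factor $(1)_n/n!=1$ collapses it to $\sum_{n\ge 0}\frac{((d-1)/2)_n}{(1/2)_n}\big(\frac{s-s_0}{s}\big)^n$, and then integrate term by term; each resulting integral is an incomplete Beta–type integral over $[s_0,1]$. Re-summing the series — either through a Pfaff/Euler transformation of the ${}_2F_1$ combined with the Legendre duplication formula $\Gamma(z)\Gamma(z+\frac12)=2^{1-2z}\sqrt{\pi}\,\Gamma(2z)$ for the Pochhammer ratios, or equivalently by invoking the same Abel-type integral identity that underlies the derivation of the density $(\ref{equildensSP})$ in Theorem \ref{theo3} — should collapse the nested sum to the single incomplete Beta function $\Beta\big(\cos^2(\alpha/2);\frac{d}{2}+1,\frac{d}{2}\big)=\int_{s_0}^1(1-s)^{d/2}s^{d/2-1}\,ds$ that appears on the right-hand side of $(\ref{msf-quadratic-ext-field})$. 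The remaining step is then purely bookkeeping of Gamma factors to produce the prefactor $\frac{2^d\,\Gamma((d+3)/2)}{\sqrt{\pi}\,\Gamma(d/2+1)}$.

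I expect the main obstacle to be precisely this evaluation and resummation of the hypergeometric integral. Unlike the point-charge field of Theorem \ref{msfptch}, whose field $q\,(1-x_1)^{-(d-2)/2}$ is a constant multiple of the Newtonian potential of a unit point mass at the North Pole — so that $\int Q\,d\mu_{C_{S,\alpha}}$ may be rewritten by Fubini as a single value of the (constant on the cap) equilibrium potential — the quadratic field $(1+x_1)^2$ is not a potential, and the ${}_2F_1$-weighted integral must be computed head-on. As independent checks, specializing to $d=3$ should reproduce the corresponding statement of \cite{bilo}, and one may alternatively split $(1+x_1)^2=1+2x_1+x_1^2$ and assemble the three moments $\int x_1^k\,d\mu_{C_{S,\alpha}}$ for $k=0,1,2$; however, each such moment leads back to the same family of hypergeometric integrals, so this route does not circumvent the central computation.
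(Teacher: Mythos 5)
Your setup is sound and follows the same route as the paper: substitute the quadratic field into $(\ref{msfunc})$, strip off the common prefactor and the constant $1$, and reduce everything to one weighted hypergeometric integral. Your change of variable $s=\sin^2(\eta/2)$ is just an affine version of the paper's $u=\cos\eta$, $t=\cos\alpha$, and your reduced integral
\begin{equation*}
\frac{2^d}{\pi}\,s_0^{d/2}\int_{s_0}^1(1-s)^{\frac{d+1}{2}}\,s^{-1}\,(s-s_0)^{-\frac12}\,{}_2F_1\Big(1,\tfrac{d-1}{2};\tfrac12;\tfrac{s-s_0}{s}\Big)\,ds
\end{equation*}
is correct, as is your target value $\frac{2^d\,\Gamma((d+3)/2)}{\sqrt{\pi}\,\Gamma(d/2+1)}\,\Beta\big(\cos^2(\alpha/2);\tfrac d2+1,\tfrac d2\big)$ — indeed your constant $2^d$ matches the theorem, whereas the paper's own proof displays $2^{d/2}$ in its penultimate formula (an internal slip of the paper; your bookkeeping is right).

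The genuine gap is that the central evaluation is never carried out, and the one concrete claim you make about it is wrong. After expanding the ${}_2F_1$ in series (which is legitimate: the argument stays in $[0,1-s_0]$, so term-by-term integration is justified), the $n$-th term is $\int_{s_0}^1(1-s)^{(d+1)/2}\,s^{-1-n}\,(s-s_0)^{n-1/2}\,ds$. Because of the three distinct factors $(1-s)$, $s$, $(s-s_0)$, this is \emph{not} an incomplete-Beta-type integral; it is a full Euler integral whose value is a ${}_2F_1$ with $n$-dependent parameters. The resummation is therefore a sum of hypergeometric functions, not a binomial series, and neither a Pfaff/Euler transformation nor the duplication formula, as invoked, collapses it; nor does the Abel-inversion machinery of Theorem \ref{theo3}, which solves the integral equation but does not evaluate integrals of this type. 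What is actually needed is the mechanism of Lemma \ref{lemmabds} (Lemma A.1 of Brauchart--Dragnev--Saff), which the paper adapts: the substitution $(t-u)/(1-u)=xv$, $x=\cos^2(\alpha/2)$, brings the integral to $\int_0^1 v^{\gamma-1}(1-v)^{\beta+1}(1-xv)^{-(\beta+\gamma+1)}\,{}_2F_1(1,\beta;\gamma;xv)\,dv$ with $\beta=(d-1)/2$, $\gamma=1/2$; then each term-wise integral is written as ${}_2F_1(n+\gamma,\beta+\gamma+1;n+\gamma+\beta+2;x)$ via Euler's representation, that ${}_2F_1$ is re-expanded as an integral and the sum and integral are interchanged a \emph{second} time, so that the $n$-sum becomes the elementary series $\sum_n\frac{(\beta)_n\,\Gamma(n+\gamma)}{\Gamma(n+1)\,(\gamma)_n}\,z^n=\Gamma(\gamma)(1-z)^{-\beta}$; one final Euler integral produces the single function ${}_2F_1(\gamma-\beta,\beta+\gamma+1;\beta+\gamma+2;x)$, and only then do the linear transformation \cite[15.3.3]{abram} and identity $(\ref{hypergeom-to-beta})$ convert it into $\Beta\big(\cos^2(\alpha/2);\tfrac d2+1,\tfrac d2\big)$. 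Until you supply this (or an equivalent) chain, the proof consists of a correct reduction plus an unproven claim that the answer "should collapse" to the stated form.
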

\noindent The density of the corresponding equilibrium measure is found by applying Theorem \ref{theo3}.
\begin{theorem}\label{density-rational-ext-field}
If the external field $Q$ defined via $(\ref{extfieldexam})$, the support $S_Q$ is a spherical cap $C_{S,\alpha}$ centered at the South Pole, with $\alpha=\alpha_0\in(0,\pi)$. The 
angle $\alpha_0$ is defined as a unique solution of the equation
\begin{equation}\label{eqn-crit-pnts-quadratic-ext-field}
\frac {\sqrt{\pi}d(d-2)\, \Gamma(d/2-1)}  {2^d (d^2-1) \Gamma((d-1)/2)}  =  \cos^4\bigg(\frac {\alpha} {2} \bigg) \Beta \bigg( \cos^2\bigg(\frac {\alpha} {2} \bigg) ; \frac{d}{2}-1,\frac{d}{2}\bigg) - \Beta \bigg( \cos^2\bigg(\frac {\alpha} {2} \bigg) ; \frac{d}{2}+1, \frac{d}{2}\bigg).
\end{equation}
Let
\begin{empheq}{align}\label{F-quadratic-ext-field-theo}
F(\eta) =  - \frac  { 2\, \Gamma((d+3)/2)} { d(d-2)\pi^{(d+1)/2} }  \, \bigg\{& \left( \frac {1-\cos\alpha_0} {1-\cos\eta} \right)^{d/2} \, \sqrt{\frac{1-\cos\eta}{\cos\alpha_0-\cos\eta}}\, (1+\cos\alpha_0)^2 \\ \nonumber
& + 2(d-1)\, \Beta \bigg( \frac{\cos\alpha_0-\cos\eta}{1-\cos\eta} ; \frac{1}{2},\frac{d}{2}\bigg) \\ \nonumber
& - 2(d+1)\,(1-\cos\eta)\,  \Beta \bigg( \frac{\cos\alpha_0-\cos\eta}{1-\cos\eta} ; \frac{1}{2},\frac{d}{2}+1 \bigg) \\ \nonumber
& + \frac{d+3}{2}\,(1-\cos\eta)^2\,  \Beta \bigg( \frac{\cos\alpha_0-\cos\eta}{1-\cos\eta} ; \frac{1}{2},\frac{d}{2}+2 \bigg) \bigg\}, \quad \alpha_0 \leq \eta \leq \pi,
\end{empheq}
and
\begin{empheq}{align}\label{C-Q-quadratic-ext-field}
C_Q  =  \frac {\Gamma(d/2-1)} {2^{d-1}\,  \pi^{d/2}}   \bigg(\Beta\bigg(\cos^2\bigg(\frac {\alpha_0} {2} \bigg);  \frac{d-2}{2},  \frac{d}{2} \bigg) \bigg)^{-1} \bigg\{ 1  + \frac {2^d\, \Gamma((d+3)/2)} {\sqrt{\pi} \, \Gamma(d/2+1)} \Beta \bigg( \cos^2\bigg(\frac {\alpha_0} {2} \bigg) ; \frac{d}{2}+1,\frac{d}{2}\bigg)\bigg\}.
\end{empheq}
The density of the equilibrium measure $\mu_Q$ is given by
\begin{equation}\label{densExamp-rational-ext-field}
f(\eta)  =   C_Q\,  \bigg(\frac{1-\cos\alpha_0}{1-\cos\eta} \bigg)^{(d-1)/2} \, \sqrt{\frac{1-\cos\alpha_0}{\cos\alpha_0-\cos\eta}}  \, {}_2 F_1\bigg (1, \frac {d-1} {2};\frac {1} {2};  \frac{\cos\alpha_0-\cos\eta}{1-\cos\eta} \bigg) + F(\eta), \quad \alpha_0 \leq \eta \leq \pi.
\end{equation}
\end{theorem}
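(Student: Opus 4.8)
The plan is to produce, for $Q(x)=(1+x_1)^2$, the three ingredients required by Theorem~\ref{theo3}---the cap angle $\alpha_0$, the correction term $F(\eta)$, and the constant $C_Q$---and then substitute them into \eqref{equildensSP}. On the sphere $x_1=\cos\theta$, so on the support the field reads $Q(\theta)=(1+\cos\theta)^2=4\cos^4(\theta/2)$ through the identity $1+\cos\theta=2\cos^2(\theta/2)$; this identity is exactly what turns every integral below into a Beta integral.

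First I would locate $\alpha_0$. By Proposition~\ref{msfmp} the extremal angle minimizes the $\cF$-functional, whose closed form \eqref{msf-quadratic-ext-field} is already available from Theorem~\ref{msf-quadratic-ext-field-theo}. Setting $t=\cos^2(\alpha/2)$ and writing $\cF=K\,(1+M\,B_2)/B_1$ with $B_1=\Beta(t;\tfrac d2-1,\tfrac d2)$, $B_2=\Beta(t;\tfrac d2+1,\tfrac d2)$ and positive constants $K,M$ read off from \eqref{msf-quadratic-ext-field}, I would differentiate using $\tfrac{d}{dt}\Beta(t;a,b)=t^{a-1}(1-t)^{b-1}$. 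Since $B_1'=t^{d/2-2}(1-t)^{d/2-1}$ and $B_2'=t^{d/2}(1-t)^{d/2-1}$ satisfy $B_2'=t^2B_1'$, a short computation gives $\cF'=KM\,B_1'B_1^{-2}\big(t^2B_1-B_2-1/M\big)$, so the stationarity condition reads $t^2B_1-B_2=1/M$; rewriting $1/M$ through the recurrence $\Gamma(z+1)=z\Gamma(z)$ turns it into exactly \eqref{eqn-crit-pnts-quadratic-ext-field}. Uniqueness and minimality come together: with $\Psi(t):=t^2B_1-B_2$ one has $\Psi'(t)=2t\,B_1>0$ on $(0,1)$, so $\cF$ decreases then increases as $t$ passes through the single root, and the unique resulting $\alpha_0\in(0,\pi)$ is the global minimizer.

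The heart of the proof is the evaluation of $F(\eta)$ from \eqref{auxfSP}--\eqref{auxsSP}, which is a double Abel inversion. For the inner transform \eqref{auxsSP}, since $Q(\theta)\cos^{d-3}(\theta/2)=4\cos^{d+1}(\theta/2)$, the substitution $v=\cos\theta$ turns the integral into $\int_{-1}^{\cos\zeta}(1+v)^{(d+1)/2}(\cos\zeta-v)^{-1/2}\,dv$, which evaluates to a complete Beta factor times $(1+\cos\zeta)^{(d+2)/2}$; differentiating and multiplying by $\tan^{d-3}(\zeta/2)$ collapses $g$ to the single monomial $g(\zeta)\propto\sin^{d-2}(\zeta/2)\cos^4(\zeta/2)$. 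Feeding this into \eqref{auxfSP}, the substitution $p=\sin^2(\zeta/2)$ together with $\cos\zeta-\cos\eta=2(\sin^2(\eta/2)-p)$ reduces the outer integral to $\int_A^{P}p^{(d-2)/2}(1-p)^2(P-p)^{-1/2}\,dp$, where $A=\sin^2(\alpha_0/2)$ and $P=\sin^2(\eta/2)$; expanding $(1-p)^2=1-2p+p^2$ and substituting $s=1-p/P$ identifies the three pieces as $P^{(d-1)/2+k}\,\Beta\big(\tfrac{P-A}{P};\tfrac12,\tfrac d2+k\big)$ for $k=0,1,2$, with $\tfrac{P-A}{P}=\tfrac{\cos\alpha_0-\cos\eta}{1-\cos\eta}$. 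A final differentiation in $\eta$ then splits each piece into a Beta-derivative part---producing the three incomplete Beta functions of \eqref{F-quadratic-ext-field-theo}---and a boundary part, the boundary parts combining (through $1-2A+A^2=(1-A)^2$ and $(1+\cos\alpha_0)^2=4(1-A)^2$) into the single algebraic term there. Carrying the proliferating powers of $2$, the Gamma factors, and the exact powers of $(1-\cos\eta)$ correctly through the two differentiations is where essentially all the labor lies, and is the step most prone to sign or normalization slips.

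Finally I would fix $C_Q$ and assemble. The rigorous route is to insert the explicit $F$ into \eqref{constCQSP} and evaluate $\int_{\alpha_0}^\pi F(\eta)\sin^{d-2}\eta\,d\eta$, the Beta integrals again telescoping. More transparently, the bracket and the capacity prefactor of \eqref{C-Q-quadratic-ext-field} are precisely those of the $\cF$-value \eqref{msf-quadratic-ext-field}, so that $C_Q=\tfrac{\Gamma((d-1)/2)}{2\pi^{(d+1)/2}}\,\cF(C_{S,\alpha_0})$; this is the same proportionality between the leading density coefficient and the equilibrium constant $F_Q=\cF(S_Q)$ already visible in the field-free density \eqref{equildensSP-no-field}, and substituting \eqref{msf-quadratic-ext-field} yields \eqref{C-Q-quadratic-ext-field}. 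Inserting the computed $C_Q$ and $F(\eta)$ into \eqref{equildensSP} then gives \eqref{densExamp-rational-ext-field}, completing the proof.
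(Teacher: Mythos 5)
Your proposal is correct and follows essentially the same route as the paper's own proof: the critical-angle equation $(\ref{eqn-crit-pnts-quadratic-ext-field})$ comes from stationarity of the $\cF$-functional of Theorem \ref{msf-quadratic-ext-field-theo}, the term $F(\eta)$ comes from pushing $g(\zeta)\propto\sin^{d-2}(\zeta/2)\cos^{4}(\zeta/2)$ through the Abel inversion of Theorem \ref{theo3} (your substitution $p=\sin^{2}(\zeta/2)$ with the expansion $(1-p)^{2}=1-2p+p^{2}$ is exactly the paper's reflection $\zeta=\pi-y$ followed by $1+\cos y=(1+\cos\widetilde{\eta})\,t$ and $(1-\cos y)^{2}=((1+\cos y)-2)^{2}$, in different clothing), and $C_Q$ is fixed by the same shortcut the paper uses, namely $F_Q=\cF(C_{S,\alpha_0})$ via Proposition \ref{msfmp} together with $C_Q=\frac{\Gamma((d-1)/2)}{2\pi^{(d+1)/2}}\,F_Q$, rather than by integrating $F(\eta)$ in $(\ref{constCQSP})$. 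Your uniqueness argument, via $\Psi(t)=t^{2}B_{1}-B_{2}$ and $\Psi'(t)=2tB_{1}>0$, is in fact tidier than the paper's second-derivative test; the only shared omission is that neither you nor the paper verifies that a root exists in the open interval (i.e., that $\alpha_0>0$), which in your monotone framework is the one-line check $\Psi(1)=\frac{3d-2}{d-1}\cdot\frac{\sqrt{\pi}\,\Gamma(d/2-1)}{2^{d}\,\Gamma((d-1)/2)}>1/M$, valid for all $d\geq 3$.
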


% --------------------PROOFS------------------------- PROOFS------------------------- PROOFS------------------------- PROOFS----------- PROOFS------------------

\section{Proofs}
\noindent{\bf Proof of Theorem \ref{theo1}.} The idea of the proof is to show that the equilibrium potential $U^{\mu_Q}$ is Lipschitz continuous on $C_{S,\alpha}$. 
If that is established, it will imply that the normal derivatives of $U^{\mu_Q}$ exist a.e. on  $C_{S,\alpha}$. Then the 
measure $\mu_Q$ can be recovered from its potential by the formula
\begin{equation}\label{meas-recov}
d\mu_Q = -\frac{1}{(d-2)\, \omega_d}\left(\frac{\partial U^{\mu_Q}}{\partial n_+}+\frac{\partial U^{\mu_Q}}{\partial n_-}\right)\,d\sigma := f(\theta_1)\, d\sigma,
\end{equation}
where $d\sigma$ is the Lebesgue surface measure on $\supp(\mu_Q)$, $n_{+}$  and $n_{-}$ are the inner and the outer normals to the cap $C_{S,\alpha}$. It is clear that 
the normal derivatives of $U^{\mu_Q}$ are bounded a.e. by the Lipschitz constant, and hence we 
obtain $f\in L_{\text{loc}}^\infty([\alpha,\pi])$. 

Our first step is to construct an extension 
of the external field $Q$ to $\S^{d-1}$ in such a way that the extremal measures for the cap $C_{S,\alpha}$ and 
the sphere $\S^{d-1}$ are the same. Recall that the external field $Q$ is a $C^2$ function on an open neighborhood $N$ of $S_Q$ in $\S^{d-1}$. We can adjust $Q$ in such a way that 
for the new external field $\widetilde{Q}$ one has
\[
U^{\mu_Q}(x)+\widetilde{Q}(x) = F_Q,  \quad x \in S_Q,
\]
\[
U^{\mu_Q}(x)+\widetilde{Q}(x) \geq F_Q,  \quad x \in \S^{d-1},
\]
and also $\widetilde{Q}\in C^2(\S^{d-1})$.
To show that it is indeed possible, we first remark that the external field $Q$ is rotationally symmetric with respect to the rotations about the polar axis. 
Therefore, $Q$ is a function of the polar angle $\theta_1$ only, that is $Q=Q(\theta_1)$. This symmetry 
is also inherited by potential, so that on the sphere $\S^{d-1}$ we have $U^{\mu_Q}(x)=U^{\mu_Q}(\theta_1)$, $x=(r,\theta_1,\ldots,\theta_{d-2},\varphi)\in\S^{d-1}$. Next, note that $N= \{ (r,\theta_1,\ldots,\theta_{d-2},\varphi): r=1,\ \ \gamma <  \theta_1 \leq \pi, \ \ 0\leq \theta_j \leq \pi, j=2,\ldots,{d-2}, \ \  0\leq \varphi \leq 2\pi  \}$, with some $\gamma\in(0,\alpha)$. Pick a number $\epsilon$ such that $\gamma < \epsilon < \alpha$. We define a new external field $\widetilde{Q}$ as follows: set $\widetilde{Q}(\theta_1)=Q(\theta_1)$, for $\epsilon < \theta_1 \leq \pi$, while on  $[0, \epsilon]$ we tweak $Q$ to $\widetilde{Q}$ in such a way that 
\[
U^{\mu_Q}(\theta_1)+\widetilde{Q}(\theta_1)\geq F_Q,
\]
and $\widetilde{Q}\in C^2(\S^{d-1})$. Applying Theorem 4.2.14 from \cite{bhs}, we infer that $\mu_{\widetilde{Q}}=\mu_Q$ and $F_{\widetilde Q}=F_Q$. 

Let $u$ and $v$ denote the equilibrium potentials for the minimum energy problem on $C_{S,\alpha}$ and $\S^{d-1}$, 
respectively. Since the equilibrium measure is the same for those two sets, it immediately follows that $u=v$. 
Now observe that the spherical cap $C_{S,\alpha}$ is a part of the sphere $\S^{d-1}$, which is a closed smooth surface. Thus 
we can invoke the result of G\"otz \cite{gotz} to conclude that $v$ 
is Lipschitz continuous in an open neighborhood $\mathcal{U}$ of $\S^{d-1}$. Hence $U^{\mu_Q}$ is Lipschitz continuous on $C_{S,\alpha}$.

We finish the proof by justifying formula $(\ref{meas-recov})$. 
\begin{lemma}\label{meas-recov-lemma}
Let $U^\mu$ be the potential of a measure $\mu$ in a domain $G\subset\R^d$. Suppose that the intersection of $\supp(\mu)$ and the domain $G$ is a connected $C^1$ hypersurface $\Omega$. Suppose also that the potential $U^\mu$ is Lipschitz continuous on an open neighborhood of $\Omega$. Then on $\Omega$ the measure $\mu$ is 
locally absolutely continuous with respect to the Lebesgue surface measure $d\sigma$, and we have the representation
\begin{equation}\label{meas-recov-form}
d\mu = -\frac{1}{(d-2)\, \omega_d}\left(\frac{\partial U^{\mu}}{\partial n_+}+\frac{\partial U^{\mu}}{\partial n_-}\right) d\sigma,
\end{equation}
where $n_{+}$  and $n_{-}$ are the inner and the outer normals to $\Omega$.
\end{lemma}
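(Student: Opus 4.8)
\medskip
\noindent\emph{Proof proposal.} The plan is to compute the distributional Laplacian of $U^\mu$ in two different ways on a neighborhood of $\Omega$ and to match the results. The starting point is the fact that, for $d\geq 3$, the Newtonian kernel is a constant multiple of the fundamental solution of the Laplacian, so that $\Delta_x|x-y|^{-(d-2)}=-(d-2)\,\omega_d\,\delta_y$ in the sense of distributions. Differentiating under the integral sign then gives
\[
\Delta U^\mu = -(d-2)\,\omega_d\,\mu \qquad\text{on } G .
\]
Since $\supp(\mu)\cap G=\Omega$, the potential $U^\mu$ is harmonic, hence real-analytic, on $G\setminus\Omega$, and the entire mass of $\Delta U^\mu$ is carried by $\Omega$; the task is to show that this mass is distributed as a bounded density against $d\sigma$, and to identify it with $(\ref{meas-recov-form})$.

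First I would fix an open neighborhood $\mathcal U\subset G$ of $\Omega$ on which $U^\mu$ is Lipschitz, and, shrinking $\mathcal U$ if necessary, arrange that $\Omega$ splits $\mathcal U$ into two open pieces $\mathcal U_+$ and $\mathcal U_-$, lying to the sides of the inner normal $n_+$ and the outer normal $n_-$. Being Lipschitz, $U^\mu$ is continuous across $\Omega$ and, by Rademacher's theorem, differentiable almost everywhere; in particular the one-sided normal derivatives $\partial U^\mu/\partial n_+$ and $\partial U^\mu/\partial n_-$ exist $d\sigma$-a.e. on $\Omega$ and are bounded by the Lipschitz constant. On each piece $\mathcal U_\pm$ the function $U^\mu$ is harmonic and smooth, which is exactly what is needed to run Green's identity.

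The central step is to evaluate $\langle\Delta U^\mu,\phi\rangle=\int_{\mathcal U}U^\mu\,\Delta\phi\,dx$ for an arbitrary $\phi\in C_c^\infty(\mathcal U)$ by applying Green's second identity on $\mathcal U_+$ and on $\mathcal U_-$ separately. In each piece the volume term drops out because $U^\mu$ is harmonic, while the contributions on $\partial\mathcal U$ vanish since $\phi$ has compact support, leaving only integrals over $\Omega$. Adding the two pieces, the terms carrying $\nabla\phi$ cancel — here one uses that $\phi$ has a continuous, single-valued gradient while the two normals are opposite, the continuity of $U^\mu$ across $\Omega$ being what legitimizes the cancellation — and what survives is
\[
\int_{\mathcal U}U^\mu\,\Delta\phi\,dx=\int_\Omega \phi\left(\frac{\partial U^\mu}{\partial n_+}+\frac{\partial U^\mu}{\partial n_-}\right)d\sigma .
\]
Comparing this with $\langle\Delta U^\mu,\phi\rangle=-(d-2)\,\omega_d\int\phi\,d\mu$ and letting $\phi$ range over all test functions shows that, as measures on $\mathcal U$, the measure $-(d-2)\,\omega_d\,\mu$ equals $\big(\partial U^\mu/\partial n_+ + \partial U^\mu/\partial n_-\big)\,d\sigma$. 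Since the right-hand side is absolutely continuous with respect to $d\sigma$ with an $L^\infty$ density, so is $\mu$ on $\Omega$, and solving for $\mu$ yields precisely $(\ref{meas-recov-form})$.

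I expect the genuine obstacle to be the rigorous justification of Green's identity up to $\Omega$, since $U^\mu$ is only Lipschitz there: its gradient is merely $L^\infty$, and the one-sided normal derivatives exist only almost everywhere, so the boundary integrals must be read as limits rather than as classical traces. The clean remedy is to apply the classical identity on the shrunken regions $\mathcal U_\pm^\delta$ lying at distance greater than $\delta$ from $\Omega$, where $U^\mu$ is smooth, and then to let $\delta\to 0^+$; the uniform gradient bound furnished by the Lipschitz hypothesis provides the domination needed to pass to the limit and simultaneously guarantees that the resulting density belongs to $L^\infty(\Omega,d\sigma)$. The precise pairing of the one-sided derivatives with $n_+$ and $n_-$, and the sign in $(\ref{meas-recov-form})$, are then fixed by the orientation convention adopted in the statement.
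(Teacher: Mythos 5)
Your proposal is correct in substance, but it follows a genuinely different route from the paper. The paper argues pointwise: it fixes $x\in\Omega$, applies Green's second identity to the pair $u=U^{\mu}$ and the Newtonian kernel $v(y)=|x-y|^{2-d}$ on the two sides of $\Omega$, with a small ball $B(r,x)$ and a collar $V_\varepsilon$ around $\Omega$ excised, lets $\varepsilon\to 0+$ and then $r\to 0+$ (the mean value property produces the term $(d-2)\,\omega_d\,u(x)$, the Lipschitz bound kills the term $\int_{\partial B(r,x)} v\,\partial u/\partial n\,d\sigma$), and ends with $U^{\mu}$ written as the potential of the surface density $-\frac{1}{(d-2)\omega_d}\bigl(\partial u/\partial n_+ + \partial u/\partial n_-\bigr)\,d\sigma$ plus a function harmonic near $\Omega$; the identification of $\mu$ is then delegated to the uniqueness part of the Riesz Decomposition Theorem. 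You instead compute the distributional Laplacian twice: globally, $\Delta U^{\mu}=-(d-2)\,\omega_d\,\mu$ (Fubini plus the fundamental-solution property of the kernel), and locally, by two-sided integration by parts against an arbitrary $\phi\in C_c^\infty(\mathcal{U})$, which yields the jump formula
\begin{equation*}
\langle \Delta U^{\mu},\phi\rangle=\int_\Omega \phi\left(\frac{\partial U^{\mu}}{\partial n_+}+\frac{\partial U^{\mu}}{\partial n_-}\right)d\sigma ,
\end{equation*}
the $\nabla\phi$ terms cancelling because $\phi$ is smooth and $U^{\mu}$ is continuous across $\Omega$. Equating the two expressions identifies $\mu$ directly as a measure, with no need for the Riesz decomposition and no point singularity to excise (your test function is smooth, so there is no ball and no mean-value step). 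Your route is shorter and delivers the local absolute continuity and the $L^\infty$ bound in one stroke; the paper's route stays entirely within classical potential theory and produces along the way the explicit representation of $U^{\mu}$ near $\Omega$ as a single-layer potential plus a harmonic function.

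One soft spot, which you partially flag but justify with the wrong tool: the $\sigma$-a.e.\ existence of the one-sided normal derivatives does \emph{not} follow from Rademacher's theorem. Rademacher gives Lebesgue-a.e.\ differentiability in the open sets $\mathcal{U}_\pm$, which says nothing about traces on the Lebesgue-null set $\Omega$. The clean justification (needed equally by the paper, which likewise only cites the Lipschitz bound) is that each component of $\nabla U^{\mu}$ is a \emph{bounded harmonic} function in $\mathcal{U}_\pm$, hence has non-tangential limits $\sigma$-a.e.\ on the $C^1$ boundary by the Fatou-type theorem; alternatively, in your $\delta\to 0^+$ limit you may extract a weak-$*$ convergent subsequence in $L^\infty(\Omega,d\sigma)$ of the gradients restricted to the shifted surfaces, run the argument with the limit densities, and identify them with the normal derivatives afterwards. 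With that replacement, your argument is complete.
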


\begin{proof} For the case $d=3$ formula $(\ref{meas-recov-form})$ is proved in \cite[p. 164]{kel}. Also, when the measure $\mu$ is supported on a hyperplane,
expression $(\ref{meas-recov-form})$ was derived in  \cite[Lemma 3.1, p. 48]{wise}.

We begin by observing that there is a neighborhood of $\Omega$ where $\Omega$ separates $G$ into two 
pieces. We will be denoting the intersection of $G$ with that neighborhood again by $G$. 
We next pick an interior point $x\in \Omega$ and consider a small ball 
$B(r,x)$ centered at $x$, where $r>0$ is chosen such that $\overline{B(r,x)}\subset G$.
We then choose a positive side of $\Omega$ and denote the normal in that direction by $n_{+}$,
while the normal to a negative side of $\Omega$ will be denoted by $n_{-}$. We will also use the subscripts
$+$ and $-$ to distinguish the subsets of $G$ and $B(r,x)$ that lie on positive and negative sides of $\Omega$.

Let $u=U^{\mu}$ and $v(y)=1/|x-y|^{d-2}$. Observe that when $x$ is fixed, the function $v(y)$ is harmonic
for $y\neq x$. In particular, it is harmonic on a neighborhood of $G_{+}\setminus B_{+}(r,x)$. We also know that 
$u$ is harmonic in $G\setminus\Omega$. Therefore, there exists a compact set with a neighborhood
where $u$ and $v$ are both harmonic. Let $V_\varepsilon:=\{x\in G: \dist(x,\Omega) < \varepsilon \}$ be a small open neighborhood of $\Omega$, and set 
$K_\varepsilon:=(G_{+}\setminus B_{+}(r,x))\setminus V_\varepsilon$. The Green's identity \cite[p. 22]{hayken} states
that
\begin{equation}\label{green1}
\int_{K_\varepsilon} (u \Delta v - v \Delta u)\, dy = \int_{\partial K_\varepsilon} \left( u \frac{\partial v}{\partial n} - v \frac{\partial u}{\partial n} \right) d\sigma,
\end{equation}
where $\partial/\partial n$ denotes the inward normal derivative on $G\setminus B(r,x)$. As both $u$ and $v$ are harmonic in a neighborhood of $K_\varepsilon$, the 
left hand side of $(\ref{green1})$ is zero. Therefore,
\begin{equation}\label{green2}
\int_{\partial K_\varepsilon} \left( u \frac{\partial v}{\partial n} - v \frac{\partial u}{\partial n} \right) d\sigma = 0.
\end{equation}
Since the potential $u$ is Lipschitz continuous, its normal derivative is bounded a.e. by a Lipschitz constant. Therefore, 
passing to the limit $\varepsilon\rightarrow 0+$ in $(\ref{green2})$, and applying the Dominated Convergence Theorem, we deduce that
\begin{equation}\label{green3}
\int_{\partial (G_{+}\setminus B_{+}(r,x))} \left( u \frac{\partial v}{\partial n} - v \frac{\partial u}{\partial n} \right) d\sigma = 0.
\end{equation}
We proceed by splitting the domain of integration in $(\ref{green3})$ into a component that lies on $\Omega$, and the two 
components that do not. On the positive side of $\Omega$ relation $(\ref{green3})$ reads
\begin{equation}\label{green4}
\int_{\partial (G_{+}\setminus B_{+}(r,x))\setminus\Omega} \left( u \frac{\partial v}{\partial n_{+}} - v \frac{\partial u}{\partial n_{+}} \right) d\sigma + \int_{\partial (G_{+}\setminus B_{+}(r,x))\cap \Omega} \left( u \frac{\partial v}{\partial n_{+}} - v \frac{\partial u}{\partial n_{+}} \right) d\sigma= 0.
\end{equation}
By similar considerations, working with the negative side of $\Omega$, we obtain
\begin{equation}\label{green5}
\int_{\partial (G_{-}\setminus B_{-}(r,x))\setminus\Omega} \left( u \frac{\partial v}{\partial n_{-}} - v \frac{\partial u}{\partial n_{-}} \right) d\sigma + \int_{\partial (G_{-}\setminus B_{-}(r,x))\cap \Omega} \left( u \frac{\partial v}{\partial n_{-}} - v \frac{\partial u}{\partial n_{-}} \right) d\sigma= 0.
\end{equation}
Adding the right hand sides of $(\ref{green4})$ and $(\ref{green5})$, and observing that the normal 
derivatives of $v$ on $\Omega\setminus B(r,x)$ are of opposite values, we infer
\begin{equation}\label{green6}
\int_{(G\setminus B(r,x))\cap\Omega} v\, \left( \frac{\partial u}{\partial n_{+}} + \frac{\partial u}{\partial n_{-}} \right) d\sigma = \int_{\partial G} \left( u \frac{\partial v}{\partial n} - v \frac{\partial u}{\partial n} \right) d\sigma + \int_{\partial B(r,x)} \left( u \frac{\partial v}{\partial n} - v \frac{\partial u}{\partial n} \right) d\sigma.
\end{equation}
We now deal with the first integral on the right hand side of $(\ref{green6})$. Observe that in a neighborhood of $\partial G$, the potential $u$ does not depend on the choice of $x$, and the function $v$ is clearly harmonic as a function of $x$. It then follows that the first integral represents a function of $x$, which is harmonic in a neighborhood of $\partial G$, and which will be denoted by $g(x)$.

We now turn to the second integral on the right hand side of $(\ref{green6})$. First note that
\begin{equation}\label{der-v}
\frac{\partial v}{\partial n} = (d-2)\, \frac{1}{r^{d-1}}, \quad y\in\partial B(r,x).
\end{equation}
Using $(\ref{der-v})$ and the Mean Value Theorem for harmonic functions \cite{land}, we obtain
\begin{empheq}{align}\label{green7}
\int_{\partial B(r,x)} u \frac{\partial v}{\partial n} d\sigma & =  \frac {d-2}{r^{d-1}}\, \int_{\partial B(r,x)} u(y)\, d\sigma \\ \nonumber
	& = (d-2)\, \omega_d\, u(x). \nonumber
\end{empheq}
Now recall that the potential $u$ is assumed to be Lipschitz continuous, with the Lipschitz constant which we will denote by $L$. Then it follows that the normal derivative 
of $u$ will be bounded a.e. by $L$. With that in hand, we have the following estimate
\begin{empheq}{align}\label{green8}
\left| \int_{\partial B(r,x)} v \frac{\partial u}{\partial n} \, d\sigma \right| & = \left| \int_{\partial B(r,x)} \frac{1}{r^{d-2}}\, \frac{\partial u}{\partial n} \, d\sigma \right| \\ \nonumber
		& \leq \frac{1}{r^{d-2}}\, \int_{\partial B(r,x)} \left| \frac{\partial u}{\partial n} \right| \, d\sigma \\ \nonumber
		& \leq \frac{L}{r^{d-2}}\, \int_{\partial B(r,x)}  d\sigma \\ \nonumber
		& = \omega_d\, L\, r.
\end{empheq}
Estimate $(\ref{green8})$ shows that
\begin{equation}\label{green9}
\lim_{r\rightarrow0+} \int_{\partial B(r,x)} v \frac{\partial u}{\partial n} \, d\sigma = 0.
\end{equation}
Passing to the limit $r\to 0+$ in left hand side of $(\ref{green6})$, and noting that $\chi_{B(r,x)}\to 0$ a.e. as $r\to 0+$, by the Dominated 
Convergence Theorem we obtain
\begin{empheq}{align}\label{limlhs}
\lim_{r\rightarrow0+} \int_{(G\setminus B(r,x))\cap\Omega} v\, \left( \frac{\partial u}{\partial n_{+}} + \frac{\partial u}{\partial n_{-}} \right) d\sigma & = \lim_{r\rightarrow0+} \int   \chi_{G\cap\Omega} \, (1-\chi_{B(r,x)})\, v\left( \frac{\partial u}{\partial n_{+}} + \frac{\partial u}{\partial n_{-}} \right) d\sigma \\ \nonumber
& = \int  \chi_{G\cap\Omega} \, v\left( \frac{\partial u}{\partial n_{+}} + \frac{\partial u}{\partial n_{-}} \right) d\sigma \\\nonumber
& = \int_{G\cap\Omega} v\left( \frac{\partial u}{\partial n_{+}} + \frac{\partial u}{\partial n_{-}} \right) d\sigma.
\end{empheq}
Collecting $(\ref{limlhs})$, $(\ref{green7})$ and $(\ref{green9})$, we obtain
\begin{equation*}
\int_{G\cap\Omega} \left( \frac{\partial u}{\partial n_{+}} + \frac{\partial u}{\partial n_{-}} \right)\, \frac{d\sigma}{|x-y|^{d-2}} = (d-2)\, \omega_d\, u(x) + g(x).
\end{equation*}
The uniqueness part of the Riesz Decomposition Theorem \cite[Theorem $1.22'$, p. 104]{land} then yields that on $\Omega$ the measure $\mu$ is given by the expression
\begin{equation*}
d\mu = - \frac{1}{(d-2)\, \omega_d}\left(\frac{\partial U^{\mu}}{\partial n_+}+\frac{\partial U^{\mu}}{\partial n_-}\right) d\sigma,
\end{equation*}
as desired.
\end{proof}
\noindent The proof of the theorem is now complete.

\qed

% -------------------- Theorem 1.3 ---------------------------------- Theorem 1.3 ------------------------- Theorem 1.3 ---------------------------------- Theorem 1.3 ------------------------

\noindent{\bf Proof of Theorem \ref{theo2}.} Let the support of the extremal measure $\mu_Q$ be a spherical cap centered at the North Pole, that is $S_Q = C_{N,\alpha}$. From Theorem \ref{theo1} we know 
that $d\mu_Q = f(\theta_1)\, d\sigma_d$, where $f\in L_{\text{loc}}^\infty([0,\alpha])$. 

In what follows, we will need an expression for the distance between a point on a sphere $\S^{d-1}$ and another point in the space $\R^d$, which is not on the surface of the sphere $\S^{d-1}$. Let
\begin{align*}
x & =(x_1,x_2,x_3,\ldots,x_d) \\
	& = (r \cos \theta_1,r\sin \theta_1\cos \theta_2,r\sin\theta_1 \sin\theta_2\cos\theta_3, \ldots,r\sin\theta_1\sin\theta_2\ldots\sin\theta_{d-2} \sin\varphi)\in\R^d,\\
y & = (y_1,y_2,y_3,\ldots,x_d) \\
	& = ( \cos \eta_1,\sin \eta_1\cos \eta_2,\sin\eta_1 \sin\eta_2 \cos\eta_3, \ldots,\sin\eta_1\sin\eta_2\ldots\sin\eta_{d-2} \sin\psi)\in\S^{d-1},
\end{align*}
be such two points, written in hyperspherical coordinates. Then, for the inner product of $x$ and $y$, we obtain
\begin{empheq}{align*}\label{ip}
\langle x,y \rangle = \sum_{j=1}^d x_j y_j & = x_1y_1 + \sum_{j=2}^d x_j y_j \\ \nonumber
			& = r \cos\theta_1\cos\eta_1 + r \sin\theta_1\sin\eta_1 \langle \overbar{x},\overbar{y}\rangle,
\end{empheq}
where 
\begin{align*}
\overbar{x} & = (\cos \theta_2, \sin\theta_2\cos\theta_3, \ldots, \sin\theta_2  \ldots \sin\theta_{d-3} \cos\theta_{d-2} \cos\varphi,  \sin\theta_2  \ldots \sin\theta_{d-3} \cos\theta_{d-2} \sin\varphi) \in\S^{d-2},\\
\overbar{y} & = (\cos \eta_2, \sin\eta_2\cos\eta_3, \ldots, \sin\eta_2  \ldots \sin\eta_{d-3} \cos\eta_{d-2} \cos\psi,  \sin\eta_2  \ldots \sin\eta_{d-3} \cos\eta_{d-2} \sin\psi) \in\S^{d-2}.
\end{align*}
Therefore, for the distance $|x-y|$, we obtain
\begin{align*}
 |x-y|^2 & = |x|^2 + |y|^2 - 2 \langle x,y \rangle \\
 		& = r^2 + 1 - 2r(\cos\theta_1 \cos\eta_1 + \sin\theta_1 \sin\eta_1 \langle \overbar{x},\overbar{y}\rangle ) \\
		& = r^2 + 1 - 2r\lambda,
\end{align*}
where $\lambda=\cos\theta_1 \cos\eta_1 + \sin\theta_1 \sin\eta_1\, \langle\overbar{x},\overbar{y}\rangle$.
Thus, the potential $U^{\mu_Q}$ becomes
\begin{empheq}{align*}
U^{\mu_Q}(x) & = \int_{C_{N,\alpha}} \frac {d\mu_Q(y)} {|x-y|^{d-2}} \\ 
			 & = \int_0^\alpha  f(\eta_1) \, \sin^{d-2} \eta_1 \, d\eta_1 \int_{\S^{d-2}} \frac { d \sigma_{d-1}(\overbar{y}) } {(r^2 + 1 - 2r\lambda)^{(d-2)/2} }.
\end{empheq}
On the surface of the sphere $\S^{d-1}$ we have $r=1$, so that
\begin{equation}\label{pinspc}
U^{\mu_Q}(x) = \int_0^\alpha  f(\eta_1) \, \sin^{d-2}\eta_1 \, d\eta_1 \int_{\S^{d-2}} \frac { d \sigma_{d-1}(\overbar{y}) } {(2 - 2\lambda)^{(d-2)/2} }, \quad x\in C_{N,\alpha}.
\end{equation}
We will need the following proposition, which is a special case of the Funk-Hecke theorem \cite[p. 247]{be2}.
\begin{proposition}\label{funkhecke}
If $f$ is integrable on $[-1,1]$ with respect to the weight $(1-t^2)^{(d-3)/2}$, and $y$ is an arbitrary fixed point on the sphere $S^{d-1}$, then
\begin{equation}\label{fhformula}
\int_{\S^{d-1}} f (\langle x,y \rangle ) \, d\sigma_d(x) = \frac {2 \pi^{(d-1)/2}} {\Gamma ((d-1)/2)} \, \int_{-1}^1 f(t) \, (1-t^2)^{(d-3)/2} \, dt.
\end{equation}
\end{proposition}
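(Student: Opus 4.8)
The plan is to exploit the rotational invariance of the surface measure $d\sigma_d$ to collapse the surface integral onto a one-dimensional integral in the polar angle $\theta_1$. First I would observe that the left-hand side of $(\ref{fhformula})$ does not depend on the choice of $y\in\S^{d-1}$: since $d\sigma_d$ is invariant under orthogonal transformations, and since any point of $\S^{d-1}$ can be carried to any other by a rotation $\rho\in O(d)$, the change of variables $x\mapsto\rho x$ shows that replacing $y$ by $\rho y$ leaves the integral unchanged. It therefore suffices to prove the identity for the distinguished choice $y=(1,0,\ldots,0)$, the North Pole, for which $\langle x,y\rangle = x_1 = \cos\theta_1$ in the hyperspherical coordinates introduced above.

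With this choice the integrand $f(\langle x,y\rangle)=f(\cos\theta_1)$ depends on $\theta_1$ alone. I would then insert the surface area element $d\sigma_d = \sin^{d-2}\theta_1\,\sin^{d-3}\theta_2\cdots\sin\theta_{d-2}\,d\theta_1\,d\theta_2\cdots d\theta_{d-2}\,d\varphi$ and factor the integral. The dependence on the remaining angles $\theta_2,\ldots,\theta_{d-2},\varphi$ separates off completely, and the integral of $\sin^{d-3}\theta_2\cdots\sin\theta_{d-2}$ over those variables is exactly the total surface area of $\S^{d-2}$, namely $\omega_{d-1}=2\pi^{(d-1)/2}/\Gamma((d-1)/2)$. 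This reduces the left-hand side to
\[
\omega_{d-1}\int_0^\pi f(\cos\theta_1)\,\sin^{d-2}\theta_1\,d\theta_1.
\]

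It then remains to perform the substitution $t=\cos\theta_1$. Since $dt=-\sin\theta_1\,d\theta_1$ and $\sin\theta_1=\sqrt{1-t^2}$ on $[0,\pi]$, one has $\sin^{d-2}\theta_1\,d\theta_1=-(1-t^2)^{(d-3)/2}\,dt$; as $\theta_1$ runs from $0$ to $\pi$ the variable $t$ runs from $1$ to $-1$, and the sign absorbs the reversal of the limits, giving
\[
\int_0^\pi f(\cos\theta_1)\,\sin^{d-2}\theta_1\,d\theta_1=\int_{-1}^1 f(t)\,(1-t^2)^{(d-3)/2}\,dt.
\]
Combining this with the preceding display and the value of $\omega_{d-1}$ yields $(\ref{fhformula})$ exactly.

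The computation is elementary once the reduction by symmetry is in place, so I do not anticipate a genuine obstacle. The only two points requiring care are the justification of the rotational invariance, which rests solely on the orthogonal invariance of $d\sigma_d$, and the observation that the hypothesis that $f$ be integrable against the weight $(1-t^2)^{(d-3)/2}$ is precisely the condition guaranteeing that the iterated integral is absolutely convergent, so that the separation of the angular variables (Fubini's theorem) and the final substitution are legitimate.
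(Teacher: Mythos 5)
Your proof is correct, and it is in fact more self-contained than what the paper offers: the paper gives no proof of this proposition at all, merely citing it as a special case of the Funk--Hecke theorem from \cite{be2}. The comparison is therefore between your direct argument and the general theorem being invoked. The full Funk--Hecke theorem evaluates $\int_{\S^{d-1}} f(\langle x,y\rangle)\, S_n(x)\, d\sigma_d(x)$ for an arbitrary spherical harmonic $S_n$, producing a Gegenbauer-polynomial coefficient; the proposition here is the case $n=0$ (constant harmonic), for which that coefficient is trivial and the statement collapses to the one-dimensional identity you derived. Your route --- reduce to $y$ at the North Pole by orthogonal invariance of $d\sigma_d$, factor the hyperspherical area element so that the integral over $\theta_2,\ldots,\theta_{d-2},\varphi$ contributes exactly $\omega_{d-1}=2\pi^{(d-1)/2}/\Gamma((d-1)/2)$, then substitute $t=\cos\theta_1$ --- is the standard elementary proof of this special case, and all three steps (the invariance argument, the identification of the angular factor with the area of $\S^{d-2}$, and the Jacobian computation $\sin^{d-2}\theta_1\,d\theta_1=-(1-t^2)^{(d-3)/2}\,dt$) are accurate. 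Your closing remark about integrability is also the right one: applying the same factorization to $|f|$ via Tonelli shows that the stated hypothesis on $f$ is precisely the condition making $f(\langle\cdot,y\rangle)$ integrable on the sphere, which legitimizes the use of Fubini. What your approach buys is a proof from first principles with no special-function machinery; what the citation buys the paper is brevity, at the cost of invoking a theorem considerably stronger than what is actually needed.
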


\noindent Applying Proposition \ref{funkhecke} to the inner integral in $(\ref{pinspc})$, we thus obtain
\begin{align*}
 \int_{\S^{d-2}} \frac { d \sigma_{d-1}(\overbar{y}) } {(2 - 2\lambda)^{(d-2)/2} } & = \int_{\S^{d-2}} \frac { d \sigma_{d-1}(\overbar{y}) } {(2 - 2(\cos\theta_1 \cos\eta_1 + \sin\theta_1 \sin\eta_1 (\overbar{x},\overbar{y})))^{(d-2)/2} } \\
 & =  \frac {2 \pi^{(d-2)/2}} {\Gamma ((d-2)/2)} \, \int_0^\pi \frac {\sin^{d-3}\xi \,d\xi} {(2 - 2(\cos\theta_1 \cos\eta_1 + \sin\theta_1 \sin\eta_1 \cos\xi))^{(d-2)/2}}.
\end{align*}
Hence, for the potential $U^{\mu_Q}$ on the spherical cap ${C_{N,\alpha}}$, we finally obtain
\begin{align*}
U^{\mu_Q}(\theta_1) & =   \frac {2 \pi^{(d-2)/2}} {\Gamma ((d-2)/2)} \,  \int_0^\alpha  f(\eta_1) \, \sin^{d-2}\eta_1 \, d\eta_1 \, \int_0^\pi \frac {\sin^{d-3} \xi \,d\xi} {(2 - 2(\cos\theta_1 \cos\eta_1 + \sin\theta_1 \sin\eta_1 \cos\xi))^{(d-2)/2}} \\
& =   \frac {2 \pi^{(d-2)/2}} {\Gamma ((d-2)/2)}  \,  \int_0^\alpha  f(\eta_1) \, \sin^{d-2}\eta_1 \, d\eta_1 \, \int_0^\pi \frac { \sin^{d-3} \xi\,d\xi} {(2 - 2\gamma)^{(d-2)/2}}, \quad 0 \leq \theta_1 \leq \alpha,
\end{align*}
where $\gamma$ is defined as
\begin{equation}\label{gam}
\gamma = \cos\theta_1 \cos\eta_1 + \sin\theta_1 \sin\eta_1 \cos\xi.
\end{equation}
Therefore, integral equation $(\ref{ieec})$ now assumes the form
\begin{equation}\label{ie1}
  \frac {2 \pi^{(d-2)/2}} {\Gamma ((d-2)/2)}  \,  \int_0^\alpha  f(\eta_1)\, \sin^{d-2} \eta_1 \, d\eta_1 \, \int_0^\pi \frac { \sin  ^{d-3} \xi\,d\xi} {(2 - 2\gamma)^{(d-2)/2}} = F_Q - Q(\theta_1), \quad 0 \leq \theta_1 \leq \alpha.
\end{equation}
Integral equation $(\ref{ie1})$, in the case $d=3$, was first obtained and solved by Collins \cite{col}. A generalization of the results of \cite{col} to the case of $d$ dimensions, $d\geq 3$, was considered by Shail \cite{shail}. However, some of the arguments employed in \cite{shail}, in particular, those used in derivation of a special case of integral equation $(\ref{ie1})$, are not mathematically rigorous. 

Letting
\[
a:= 2\sin\left( \frac {\theta_1} {2} \right)  \cos\bigg( \frac {\eta_1} {2} \bigg),
\]
\[
b:= 2 \sin\bigg( \frac {\eta_1} {2} \bigg) \cos\left( \frac {\theta_1} {2} \right),
\] 
using elementary trigonometric identities, we can easily see that
\begin{align*}
2 - 2\gamma & = 2 - 2 (\cos\theta_1 \cos\eta_1 + \sin\theta_1 \sin\eta_1 \cos\xi) \\
			& = a^2 + b^2 - 2ab\cos\xi.
\end{align*}
Observe that as $0<\alpha\leq\pi$, it is clear that $a>0$ and $b>0$ for all and $0\leq \theta_1 \leq \alpha$ and $0\leq \eta_1 \leq\alpha$. 
The next step is to further transform the kernel of the integral equation $(\ref{ie1})$. Namely, it is facilitated via the following
\begin{lemma}\label{copgen}
If $a$ and $b$ are positive numbers, $a\neq b$, and $q \geq 0$, then
\begin{equation}\label{copgenf}
\int_0^{\pi} \frac {\sin^{2q}\xi \,d\xi} {(a^2+b^2-2ab\cos\xi)^{q+\frac{1}{2}}} = \frac{2}{a^{2q}\, b^{2q}} \int_0^{\min{(a,b)}} \frac {t^{2q}\,dt} {\sqrt{a^2-t^2} \sqrt{b^2-t^2}}.
\end{equation}
\end{lemma}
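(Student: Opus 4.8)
The plan is to evaluate the left-hand side by a single substitution with a transparent geometric meaning. Since both sides of $(\ref{copgenf})$ are symmetric in $a$ and $b$, I may assume without loss of generality that $0<a<b$, so that $\min(a,b)=a$. Writing $D(\xi):=a^2+b^2-2ab\cos\xi$ for the squared third side of a triangle with sides $a,b$ and included angle $\xi$, I introduce the new variable
$$ t=t(\xi):=\frac{ab\sin\xi}{\sqrt{D(\xi)}}, \qquad 0\le\xi\le\pi, $$
which is exactly the altitude of that triangle onto its third side. The immediate payoff is the identity $t^{2q}=a^{2q}b^{2q}\sin^{2q}\xi\,/\,D^{q}$, so the integrand on the left becomes $\sin^{2q}\xi\,D^{-q-1/2}\,d\xi=a^{-2q}b^{-2q}\,t^{2q}\,D^{-1/2}\,d\xi$. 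Everything therefore reduces to identifying $D^{-1/2}\,d\xi$ with $\pm\,dt\,(a^2-t^2)^{-1/2}(b^2-t^2)^{-1/2}$.

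Next I would record the two algebraic factorizations that drive the computation. A direct expansion, using $D-b^2\sin^2\xi=(a-b\cos\xi)^2$ and $D-a^2\sin^2\xi=(b-a\cos\xi)^2$, gives
$$ a^2-t^2=\frac{a^2\,(a-b\cos\xi)^2}{D}, \qquad b^2-t^2=\frac{b^2\,(b-a\cos\xi)^2}{D}, $$
so that $\sqrt{a^2-t^2}\,\sqrt{b^2-t^2}=ab\,|a-b\cos\xi|\,|b-a\cos\xi|\,D^{-1}$. Differentiating the defining relation $t^2 D=a^2b^2\sin^2\xi$ and simplifying with the factorization $\cos\xi\,D-ab\sin^2\xi=-(a\cos\xi-b)(b\cos\xi-a)$ yields
$$ \frac{dt}{d\xi}=\frac{ab\,(b-a\cos\xi)\,(b\cos\xi-a)}{D^{3/2}}. $$
Comparing the last two displays (and using $b>a$, so that $b-a\cos\xi>0$ throughout) shows that $\left|dt/d\xi\right|=\sqrt{a^2-t^2}\,\sqrt{b^2-t^2}\,/\sqrt{D}$, i.e. $D^{-1/2}\,d\xi=\pm\,dt\,(a^2-t^2)^{-1/2}(b^2-t^2)^{-1/2}$, with the sign governed by that of $a-b\cos\xi$.

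The map $\xi\mapsto t$ is therefore two-to-one: analyzing the sign of $dt/d\xi$ shows that $t$ increases from $0$ to its maximum value $a=\min(a,b)$ as $\xi$ runs from $0$ to $\xi_0:=\arccos(a/b)$, and then decreases back to $0$ as $\xi$ runs from $\xi_0$ to $\pi$. Splitting the integral at $\xi_0$, on the first branch one has the plus sign and obtains $\int_0^a t^{2q}(a^2-t^2)^{-1/2}(b^2-t^2)^{-1/2}\,dt$, while on the second branch the minus sign combines with the reversed orientation $t:a\to0$ to give exactly the same integral again. Adding the two equal contributions produces the factor $2$, and restoring the prefactor $a^{-2q}b^{-2q}$ yields the right-hand side of $(\ref{copgenf})$. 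I expect the only delicate point to be the careful bookkeeping of signs on the two monotonicity branches, in particular verifying that the absolute values in $\sqrt{a^2-t^2}\,\sqrt{b^2-t^2}$ resolve consistently with the sign of $dt/d\xi$; the factorizations themselves are routine trigonometric algebra.
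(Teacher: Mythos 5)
Your proof is correct, but it follows a genuinely different route from the paper's. The paper derives $(\ref{copgenf})$ from three imported ingredients: Kahane's identity (Proposition \ref{kahlemma}, applied with $\tau=1/2$, $\upsilon=q$), Fubini's theorem, and Landkof's evaluation of $\int_0^\pi \sin^{p-2}\xi\,(1+\rho^2-2\rho\cos\xi)^{-p/2}\,d\xi$, after which the factor $1-(t^2/ab)^2$ and the Gamma factors cancel. You instead take $b>a$ without loss of generality (both sides are symmetric in $a,b$) and make the single geometric substitution $t=ab\sin\xi/\sqrt{D(\xi)}$, with $D(\xi)=a^2+b^2-2ab\cos\xi$, i.e.\ the altitude onto the third side of the triangle with sides $a,b$ and included angle $\xi$; you then verify the factorizations $a^2-t^2=a^2(a-b\cos\xi)^2/D$ and $b^2-t^2=b^2(b-a\cos\xi)^2/D$ together with $dt/d\xi=ab\,(b-a\cos\xi)(b\cos\xi-a)/D^{3/2}$, and split the integral at $\xi_0=\arccos(a/b)$, where the two-to-one map turns around; each branch yields the same integral over $t\in(0,a)$, producing the factor $2$. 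I checked these computations: the factorizations, the derivative formula, the fact that $t(\xi_0)=a=\min(a,b)$, and the sign analysis (the sign of $dt/d\xi$ is that of $b\cos\xi-a$, since $b-a\cos\xi>0$ when $b>a$) are all right; the only point worth making explicit is that each branch integral is improper at $t=a$, but converges because $a<b$ makes the singularity $(a^2-t^2)^{-1/2}$ integrable, so the substitution is justified as a limit of proper integrals. Your argument is entirely self-contained --- nothing beyond calculus and trigonometric algebra --- and it realizes, even more fully than the paper's own proof, the paper's stated aim that the result should admit a proof independent of any special function: the paper avoids Bessel functions but still leans on Kahane's identity and on Beta-integral evaluations. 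What the paper's heavier route buys is generality: Kahane's identity holds for all $\tau\in(0,1)$ and $\mathrm{Re}(\upsilon)\geq 0$, so the same template covers kernels with arbitrary Riesz-type exponents, whereas your substitution exploits the specific pairing of the numerator $\sin^{2q}\xi$ with the denominator exponent $q+\tfrac{1}{2}$, which is exactly what makes the Jacobian match the algebraic identity.
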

\noindent We remark that Lemma \ref{copgen} is generalization of a result, obtained by Copson \cite{cop} when $q=0$. Lemma \ref{copgen} is implicitly mentioned in \cite{shail}, although with an incorrect numerical coefficient. In \cite{shail}, the author derives $(\ref{copgenf})$ using identities for the Bessel functions. However, the form of the result suggests that its proof is independent of any special function. We present such a proof below.
\begin{proof}
The proof hinges on the following identity, obtained by Kahane \cite{kah}.
\begin{proposition}\label{kahlemma}
Let $a$ and $b$ be positive numbers such that $a\neq b$, $\tau\in(0,1)$, $\upsilon\in\mathbb C$ with $\text{Re}(\upsilon)\geq0$, and $u$ a real number with $|u|\leq 1$. Then
\begin{empheq}{align}\label{kahformula}
 \frac{(ab)^\upsilon} {(a^2+b^2-2abu)^{\tau+\upsilon}} & = \frac {\Gamma(\tau) \Gamma(\upsilon+1)} {\Gamma(\tau+\upsilon)} \frac {2\sin(\tau \pi)} {\pi} \times   \\ \nonumber
& \int_0^{\min{(a,b)}} \frac {1-(t^2/ab)^2} {(1+(t^2/ab)^2-2(t^2/ab)u)^{\upsilon+1}} \left( \frac {t^2} {ab} \right)^\upsilon \frac {t^{2\tau-1} \,dt} {(a^2-t^2)^{\tau} (b^2-t^2)^{\tau}}.
\end{empheq}
\end{proposition}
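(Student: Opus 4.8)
The plan is to prove the identity by reducing it to a single normalized integral, expanding both sides as power series in a small parameter, integrating the right-hand side term by term, and recognizing the resulting coefficient identity as the classical connection formula between Gegenbauer polynomials of different orders.

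First I would record two elementary reductions. By the symmetry $a\leftrightarrow b$ of both sides I may assume $a>b$, so that $\min(a,b)=b$; and since both sides are homogeneous of degree $-2\tau$ under $(a,b)\mapsto(ca,cb)$ together with $t\mapsto ct$, I may normalize by setting $\beta:=b/a\in(0,1)$. Substituting $t=b\sqrt{x}$, writing $s:=\beta x=t^2/(ab)$, and using the reflection formula $\sin(\pi\tau)/\pi=1/(\Gamma(\tau)\Gamma(1-\tau))$ to rewrite the constant as $\Gamma(\upsilon+1)/(\Gamma(1-\tau)\Gamma(\tau+\upsilon))$, the claim collapses (after cancelling a common factor $a^{-2\tau}\beta^{\upsilon}$) to the one-variable identity
\[
\frac{1}{(1-2\beta u+\beta^2)^{\tau+\upsilon}}=\frac{\Gamma(\upsilon+1)}{\Gamma(1-\tau)\,\Gamma(\tau+\upsilon)}\int_0^1\frac{(1-\beta^2x^2)\,x^{\tau+\upsilon-1}(1-x)^{-\tau}}{(1-2\beta xu+\beta^2x^2)^{\upsilon+1}\,(1-\beta^2x)^{\tau}}\,dx,
\]
valid for $\beta\in(0,1)$, where the strict inequality $a\neq b$ guarantees $|\beta x|<1$ throughout and the endpoints are integrable because $0<\tau<1$ and $\mathrm{Re}(\tau+\upsilon)>0$.

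Next I would expand both sides in powers of $\beta$, which is legitimate for $|\beta|<1$ since all the series converge absolutely and uniformly in $x\in[0,1]$. On the left the Gegenbauer generating function gives $\sum_{n\ge0}C_n^{\tau+\upsilon}(u)\,\beta^n$. On the right I would use the Poisson-kernel generating function
\[
\frac{1-s^2}{(1-2su+s^2)^{\upsilon+1}}=\sum_{m\ge0}\frac{m+\upsilon}{\upsilon}\,C_m^{\upsilon}(u)\,s^m
\]
with $s=\beta x$ (which follows by applying the operator $s\,\partial_s+\upsilon$ to $(1-2su+s^2)^{-\upsilon}$), together with the binomial expansion $(1-\beta^2x)^{-\tau}=\sum_k\frac{(\tau)_k}{k!}(\beta^2x)^k$. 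Collecting the coefficient of $\beta^n$ and integrating term by term against $x^{\tau+\upsilon-1}(1-x)^{-\tau}$ produces Beta integrals $B(n-k+\tau+\upsilon,\,1-\tau)=\Gamma(n-k+\tau+\upsilon)\,\Gamma(1-\tau)/\Gamma(n-k+\upsilon+1)$, whose factor $\Gamma(1-\tau)$ cancels the prefactor. Matching the two series then reduces the whole proposition to the single algebraic identity
\[
C_n^{\tau+\upsilon}(u)=\frac{\Gamma(\upsilon)}{\Gamma(\tau+\upsilon)}\sum_{k=0}^{\lfloor n/2\rfloor}(\upsilon+n-2k)\,\frac{(\tau)_k}{k!}\,\frac{\Gamma(n-k+\tau+\upsilon)}{\Gamma(n-k+\upsilon+1)}\,C_{n-2k}^{\upsilon}(u),\qquad n\ge0.
\]

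Finally I would recognize this last display as exactly the classical connection formula expressing $C_n^{\mu}$ in the basis $\{C_{n-2k}^{\lambda}\}$ with $\mu=\tau+\upsilon$, $\lambda=\upsilon$, whose coefficients are $\tfrac{\lambda+n-2k}{\lambda}\,\tfrac{(\mu-\lambda)_k(\mu)_{n-k}}{k!\,(\lambda+1)_{n-k}}$; a direct rewriting of the Pochhammer and Gamma factors shows the two coefficient sets coincide. If a self-contained argument is wanted, the connection formula itself follows by evaluating its coefficients as a terminating ${}_3F_2(1)$ through the Pfaff--Saalsch\"utz or Chu--Vandermonde summation. I expect the main obstacle to be the boundary case $\upsilon=0$: there the factors $\tfrac{m+\upsilon}{\upsilon}$ and $\Gamma(\upsilon)$ are singular and the generating function degenerates to the Chebyshev kernel $1+2\sum_{m\ge1}T_m(u)\,s^m$, so this value must be treated either by a separate expansion or by continuity in $\upsilon$ from $\mathrm{Re}\,\upsilon>0$; analytic continuation in $\upsilon$ (both sides being holomorphic on $\mathrm{Re}\,\upsilon\ge0$) then disposes of the remaining complex values once the identity is known for $\upsilon$ in a real neighborhood. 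A secondary, purely technical point is the rigorous justification of the term-by-term integration and the interchange of the double summation, which I would derive from the absolute convergence available for $|\beta|<1$.
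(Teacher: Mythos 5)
Your proof is correct, and it necessarily takes a different route from the paper, because the paper does not prove Proposition~\ref{kahlemma} at all: the identity is imported verbatim from Kahane \cite{kah}, whose own derivation (like Shail's treatment of the related kernel in \cite{shail}) runs through Bessel-function machinery. I checked your reductions. The substitution $t=b\sqrt{x}$ with $\beta=b/a$ does collapse the statement to your one-variable identity, the Jacobian's factor $\tfrac12$ and the reflection formula combining to give exactly the constant $\Gamma(\upsilon+1)/(\Gamma(1-\tau)\Gamma(\tau+\upsilon))$; and the coefficient of $\beta^n$ on the right, after the Beta integrals $\Beta(n-k+\tau+\upsilon,1-\tau)$ cancel the $\Gamma(1-\tau)$, is
\[
\frac{\Gamma(\upsilon)}{\Gamma(\tau+\upsilon)}\,(\upsilon+n-2k)\,\frac{(\tau)_k}{k!}\,\frac{\Gamma(n-k+\tau+\upsilon)}{\Gamma(n-k+\upsilon+1)},
\]
which agrees term by term with the classical Gegenbauer connection coefficients $\frac{\lambda+n-2k}{\lambda}\frac{(\mu-\lambda)_k\,(\mu)_{n-k}}{k!\,(\lambda+1)_{n-k}}$ at $\mu=\tau+\upsilon$, $\lambda=\upsilon$, via $(\mu)_{n-k}=\Gamma(\mu+n-k)/\Gamma(\mu)$ and $\Gamma(\upsilon+1)=\upsilon\,\Gamma(\upsilon)$ (I verified the cases $n=0,1,2$ independently). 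Your two flagged caveats are the right ones and your fixes work: at $\upsilon=0$ only the expansion degenerates, not the integral identity itself, so continuity in $\upsilon$ (or the Chebyshev kernel $1+2\sum_{m\ge1}T_m(u)s^m$ directly) settles it; and for complex $\upsilon$ with $\mathrm{Re}\,\upsilon\ge0$ both sides are holomorphic on the open half-plane (all power bases are positive reals, so moduli are controlled by $\mathrm{Re}\,\upsilon$, and Morera plus locally uniform integrability applies) and continuous up to the boundary, so the identity theorem finishes. The term-by-term integration is legitimate by Tonelli using $|C_m^{\upsilon}(u)|\le C_m^{\upsilon}(1)=\bigO{m^{2\upsilon-1}}$ for real $\upsilon>0$ and $|\beta|<1$. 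As for what each approach buys: the paper's citation is short but leaves the key identity a black box, while your argument exposes Kahane's formula as precisely the generating-function form of the connection formula expanding $C_n^{\tau+\upsilon}$ in the $C_{n-2k}^{\upsilon}$ — in particular it explains why the Poisson-type kernel $(1-s^2)/(1-2su+s^2)^{\upsilon+1}$, whose coefficients are $\frac{m+\upsilon}{\upsilon}C_m^{\upsilon}(u)$, must appear in the integrand — and it is self-contained modulo one classical fact, which, as you note, reduces to a terminating ${}_3F_2$ summed by Pfaff--Saalsch\"utz.
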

\noindent Setting $\tau=1/2$, $\upsilon=q\geq0$ in Proposition \ref{kahlemma}, and using Fubini's theorem, we rewrite the left hand side of $(\ref{copgenf})$ as
\begin{empheq}{align}\label{kern1}
\int_0^{\pi} \frac {\sin^{2q}\xi \,d\xi} {(a^2+b^2-2ab\cos\xi)^{q+\frac{1}{2}}}  =  \frac{2}{a^{2q}\, b^{2q}} \frac {\Gamma(q+1)} {\sqrt{\pi} \Gamma(q+1/2)} & \int_0^{\min{(a,b)}}  \frac {(1-(t^2/ab)^2)\, t^{2q}\,dt} { {\sqrt{a^2-t^2} \sqrt{b^2-t^2}}} \times \\ \nonumber
& \int_0^\pi \frac{\sin^{2q}\xi \, d\xi} {(1+(t^2/ab)^2-2(t^2/ab)\cos\xi)^{q+1}}.
\end{empheq}
Next, we show that
\begin{equation}\label{ltint}
\int_0^\pi \frac{\sin^{2q}\xi \, d\xi} {(1+(t^2/ab)^2-2(t^2/ab)\cos\xi)^{q+1}} = \frac {1} {1-(t^2/ab)^2} \frac {\sqrt{\pi}\, \Gamma(q+1/2)} {\Gamma(q+1)}.
\end{equation}
The integral of a type appearing on the left hand side of $(\ref{ltint})$ was previously considered in \cite[p. 400]{land}. It was shown that
\begin{equation}\label{lankint}
\int_0^\pi \frac {\sin^{p-2}\xi \, d\xi} {(1+\rho^2-2\rho\cos\xi)^{p/2}} = \frac {1} {\rho^{p-2}(\rho^2-1)} \int_0^\pi \sin^{p-2}\xi \,d\xi,
\end{equation}
where $\rho\geq 1$, and $p\geq 3$ was assumed to be an integer. A careful analysis of the evaluation of integral $(\ref{lankint})$ in \cite[p. 400]{land} shows that, in fact, $(\ref{lankint})$ holds true
for any $p\geq2$.
We hence transform the left hand side of $(\ref{ltint})$ as follows,
\begin{align*}
\int_0^\pi \frac{\sin^{2q}\xi \, d\xi} {(1+(t^2/ab)^2-2(t^2/ab)\cos\xi)^{q+1}} & = \frac {1} {1-(t^2/ab)^2} \int_0^\pi \sin^{2q}\xi \, d\xi \\
			& = \frac {1} {1-(t^2/ab)^2} 2^{2q} \, \Beta(q+1/2,q+1/2) \\
			& = \frac {1} {1-(t^2/ab)^2}  \frac {\sqrt{\pi}\, \Gamma(q+1/2)} {\Gamma(q+1)},
\end{align*}
as claimed.
Therefore, upon inserting $(\ref{ltint})$ into $(\ref{kern1})$, we obtain desired representation $(\ref{copgenf})$.
\end{proof}
\noindent Setting $q=(d-3)/2$ in Lemma \ref{copgen}, integral equation $(\ref{ie1})$ is transformed into
\begin{equation*}
\frac {4\, \pi^{(d-2)/2}} { \Gamma((d-2)/2)} \,  \int_0^\alpha  f(\eta_1) \sin^{d-2}\eta_1  \, d\eta_1 \, \frac{1}{a^{d-3}\, b^{d-3}} \int_0^{\min{(a,b)}} \frac {t^{d-3}\,dt} {\sqrt{a^2-t^2} \sqrt{b^2-t^2}} = F_Q - Q(\theta_1), \quad 0 \leq \theta_1 \leq \alpha. 
\end{equation*}
To simplify notation, we will use $\eta$ and $\theta$ instead of $\eta_1$ and $\theta_1$, respectively. Then, the last equation reads
\begin{equation}\label{ie2}
\frac{4\, \pi^{(d-2)/2}} { \Gamma((d-2)/2)}  \,  \int_0^\alpha  f(\eta)\, \sin^{d-2} \eta \, d\eta \, \frac{1}{a^{d-3}\, b^{d-3}} \int_0^{\min{(a,b)}} \frac {t^{d-3}\,dt} {\sqrt{a^2-t^2} \sqrt{b^2-t^2}} = F_Q - Q(\theta), \quad 0 \leq \theta \leq \alpha, 
\end{equation}
where 
\[
a= 2\sin\left( \frac {\theta} {2} \right)  \cos\bigg( \frac {\eta} {2} \bigg),
\]
\[
b= 2 \sin\bigg( \frac {\eta} {2} \bigg) \cos\left( \frac {\theta} {2} \right).
\] 
One can easily check that $a<b$ for $\theta<\eta$, while for $\theta>\eta$, we have $a>b$. Splitting the interval of integration of the outer integral in the 
left hand side of equation $(\ref{ie2})$, we rewrite equation $(\ref{ie2})$ as follows,
\begin{align*}
\int_0^\theta f(\eta) \sin^{d-2} \eta\, d\eta \, \frac{1}{a^{d-3}\, b^{d-3}}  \int_0^b \frac {t^{d-3}\,dt} {\sqrt{a^2-t^2} \sqrt{b^2-t^2}} + & \int_\theta^\alpha f(\eta)  \sin^{d-2} \eta\, d\eta \, \frac{1}{a^{d-3}\, b^{d-3}}  \int_0^a \frac {t^{d-3}\,dt} {\sqrt{a^2-t^2} \sqrt{b^2-t^2}} \\ = & \frac  { \Gamma((d-2)/2)} {4\, \pi^{(d-2)/2}}  \, (F_Q - Q(\theta)), \quad 0 \leq \theta \leq \alpha.
\end{align*}
Introducing the substitution
\[
t = 2 \cos\bigg(\frac {\theta} {2}\bigg) \cos\bigg( \frac {\eta} {2}\bigg) \tan\bigg( \frac {\zeta} {2}\bigg),
\]
we can further transform the last integral equation into
\begin{empheq}{align}\label{ie3}
  \int_0^\theta f(\eta) \sin \eta\, \cos^{d-3}(\eta/2) \, d\eta  & \int_0^\eta   \frac {\tan^{d-3}(\zeta/2) \,d\zeta} {\sqrt{\cos \zeta - \cos\theta} \sqrt{\cos \zeta - \cos \eta}}  \\ \nonumber
  + \int_\theta^\alpha  f(\eta) \sin \eta\, \cos^{d-3}(\eta/2) \, d\eta & \int_0^\theta \frac {\tan^{d-3}(\zeta/2) \,d\zeta} {\sqrt{\cos \zeta - \cos\theta} \sqrt{\cos \zeta - \cos \eta}} \\ \nonumber 
& = \frac  { \Gamma((d-2)/2)} {2\, \pi^{(d-2)/2}}  \,  \sin^{d-3}\bigg(\frac{\theta}{2}\bigg) \, (F_Q - Q(\theta)), \quad 0 \leq \theta \leq \alpha.
\end{empheq}
Inverting the order of integration in the first integral in the left hand side of $(\ref{ie3})$, we recast equation $(\ref{ie3})$ into
\begin{equation}\label{ie4}
 \int_0^\theta  \frac {\tan^{d-3}(\zeta/2) \,d\zeta} {\sqrt{\cos \zeta - \cos\theta} } \int_\zeta^\alpha  \frac {f(\eta) \sin \eta\,  \cos^{d-3}(\eta/2) \, d\eta} {\sqrt{\cos \zeta - \cos \eta}}  = \frac { \Gamma((d-2)/2)} {2\, \pi^{(d-2)/2}}  \,  \sin^{d-3}\bigg(\frac{\theta}{2}\bigg) \, (F_Q - Q(\theta)), \quad 0 \leq \theta \leq \alpha.
\end{equation}
Let
\begin{equation}\label{ie5}
S(\zeta) =  \int_\zeta^\alpha  \frac {f(\eta) \sin \eta\, \cos^{d-3}(\eta/2) \, d\eta} {\sqrt{\cos \zeta - \cos \eta }}, \quad 0 \leq \zeta \leq \alpha.
\end{equation}
Equation $(\ref{ie4})$ then becomes
\begin{equation}\label{ie6}
 \int_0^\theta \frac {S(\zeta)\,\tan^{d-3}(\zeta/2) \,d\zeta} {\sqrt{\cos \zeta - \cos\theta} } = \frac  { \Gamma((d-2)/2)} {2\, \pi^{(d-2)/2}}  \,  \sin^{d-3}\bigg(\frac{\theta}{2}\bigg) \, (F_Q - Q(\theta)), \quad 0 \leq \theta \leq \alpha.
\end{equation}
Equation $(\ref{ie6})$ is an Abel type integral equation with respect to $S(\zeta)\,\tan^{d-3}(\zeta/2)$. Since $Q\in C^2$, the solution of $(\ref{ie6})$ is \cite[p. 50, \# 23]{polman}
\begin{equation*}
S(\zeta) =\frac  { \Gamma((d-2)/2)} {2\, \pi^{d/2}}  \, \cot^{d-3}\bigg(\frac{\zeta}{2}\bigg) \frac {d} {d\zeta} \int_0^\zeta \frac {(F_Q - Q(\theta))\,  \sin^{d-3}(\theta/2) \sin\theta \, d\theta} {\sqrt{\cos\theta - \cos \zeta}}, \quad 0 \leq \zeta \leq \alpha.
\end{equation*}
Observing that equation $(\ref{ie5})$ is again an Abel type integral equation with respect to $f(\eta) \sin \eta\, \cos^{d-3}(\eta/2)$, we solve it and obtain
\begin{equation}\label{ie7}
f(\eta) = - \frac {1} {\pi} \frac {1} {\sin \eta}\, \sec^{d-3}\bigg(\frac{\eta}{2}\bigg) \, \frac {d} {d\eta} \int_\eta^\alpha \frac {S(\zeta) \sin \zeta\, d\zeta} {\sqrt{\cos \eta - \cos \zeta}}, \quad 0  \leq \eta \leq \alpha.
\end{equation}
Denote
\begin{equation}\label{ie8}
g(\zeta) = \cot^{d-3}\bigg(\frac{\zeta}{2}\bigg)\, \frac {d} {d\zeta} \int_0^\zeta \frac {Q(\theta)\,  \sin^{d-3}(\theta/2) \sin\theta \, d\theta} {\sqrt{\cos\theta - \cos \zeta}}, \quad 0 \leq \zeta \leq \alpha.
\end{equation}
and let
\begin{equation}\label{ie9}
F(\eta) = \frac  { \Gamma((d-2)/2)} {2\, \pi^{(d+2)/2}}  \, \frac {1} {\sin \eta} \,\sec^{d-3}\bigg(\frac{\eta}{2}\bigg) \, \frac {d} {d\eta} \int_\eta^\alpha \frac {g(\zeta) \sin \zeta \, d\zeta} {\sqrt{\cos \eta - \cos \zeta}}, \quad 0  \leq \eta \leq \alpha.
\end{equation}
In view of $(\ref{ie8})$ and $(\ref{ie9})$, expression for the density $(\ref{ie7})$ takes the form
\begin{empheq}{align}\label{ie10}
f(\eta)  = &  - \frac  { F_Q\, \Gamma((d-2)/2)} {2\, \pi^{(d+2)/2}}  \, \frac {\sec^{d-3}(\eta/2)} {\sin \eta}  \frac {d} {d\eta} \int_\eta^\alpha   \frac {\sin\zeta\, \cot^{d-3}(\zeta/2)} {\sqrt{\cos \eta - \cos \zeta}} \bigg\{ \frac {d} {d\zeta} \int_0^\zeta \frac{\sin^{d-3} (\theta/2) \, \sin\theta\,d\theta} {\sqrt{\cos \theta - \cos \zeta}} \bigg\}\, d\zeta  \\ \nonumber 
& + F(\eta), \quad  0  \leq \eta \leq \alpha.
\end{empheq}

\noindent It is not hard to see that
\begin{equation*}
 \int_0^\zeta \frac{\sin^{d-3} (\theta/2) \, \sin\theta\,d\theta} {\sqrt{\cos \theta - \cos \zeta}} = \sqrt{2} \, \Beta\left( \frac {d-1} {2}, \frac {1} {2} \right) \, \sin^{d-2} \left( \frac {\zeta} {2} \right).
\end{equation*}
Upon differentiating last expression with respect to $\zeta$, we find that
\begin{equation*}
 \frac {d} {d\zeta} \int_0^\zeta \frac{\sin^{d-3} (\theta/2) \, \sin\theta\,d\theta} {\sqrt{\cos \theta - \cos \zeta}} = \frac {d-2} {\sqrt{2}} \, \Beta\left( \frac {d-1} {2}, \frac {1} {2} \right) \, \sin^{d-3} \left( \frac {\zeta} {2} \right) \,  \cos \left( \frac {\zeta} {2} \right).
\end{equation*}
Using the elementary transformations, one can show that
\begin{equation*}
 \int_\eta^\alpha   \frac {\sin\zeta\, \cot^{d-3}(\zeta/2) \, \, \sin^{d-3} (\zeta/2) \,  \cos(\zeta/2)} {\sqrt{\cos \eta - \cos \zeta}} \, d\zeta = \sqrt{2} \,  \cos^{d-1} \left( \frac {\eta} {2} \right) \, \Beta\bigg(\frac {\cos\eta - \cos\alpha} {1 + \cos\eta}; \frac{1}{2}, \frac{d}{2} \bigg).
\end{equation*}
Hence, we can conclude that
\begin{align*}
\int_\eta^\alpha   \frac {\sin\zeta\, \cot^{d-3}(\zeta/2)} {\sqrt{\cos \eta - \cos \zeta}} \bigg\{ \frac {d} {d\zeta} \int_0^\zeta  &  \frac{\sin^{d-3} (\theta/2) \, \sin\theta\,d\theta} {\sqrt{\cos \theta - \cos \zeta}}  \bigg\}\, d\zeta \\ 
& = (d-2) \,   \Beta\left( \frac {d-1} {2}, \frac {1} {2} \right) \,  \cos^{d-1} \left( \frac {\eta} {2} \right) \,   \Beta\bigg(\frac {\cos\eta - \cos\alpha} {1 + \cos\eta}; \frac{1}{2}, \frac{d}{2} \bigg).
\end{align*}
Differentiating the latter, and  simplifying, we see that 
\begin{empheq}{align}\label{integ2}
 \frac {d} {d\eta} \int_\eta^\alpha   \frac {\sin\zeta\, \cot^{d-3}(\zeta/2)} {\sqrt{\cos \eta - \cos \zeta}} \bigg\{ \frac {d} {d\zeta} \int_0^\zeta   \frac{\sin^{d-3} (\theta/2) \, \sin\theta\,d\theta} {\sqrt{\cos \theta - \cos \zeta}} \bigg\}\, d\zeta & \\ \nonumber 
  = - \frac {d-2} {2} \,  \Beta\left( \frac {d-1} {2}, \frac {1} {2} \right) \, \sin\eta \, & \bigg\{  \frac {d-1} {2} \,  \cos^{d-3} \left( \frac {\eta} {2} \right) \,   \Beta\bigg(\frac {\cos\eta - \cos\alpha} {1 + \cos\eta}; \frac{1}{2}, \frac{d}{2} \bigg) \\ \nonumber
 & + \frac {\cos^{d-1}(\alpha/2)} {\cos^2(\eta/2) } \,  \sqrt{\frac{1+\cos\alpha}{\cos\eta-\cos\alpha}} \bigg\} .
\end{empheq}

\noindent Inserting $(\ref{integ2})$ into $(\ref{ie10})$, after some algebra, we eventually find that
\begin{equation}\label{ie11}
f(\eta)  =   \frac  { F_Q\, \Gamma((d-1)/2)} {2\, \pi^{(d+1)/2}}  \, \bigg\{  \frac {d-1}{2}\,  \Beta\bigg(\frac {\cos\eta - \cos\alpha} {1 + \cos\eta}; \frac{1}{2}, \frac{d}{2} \bigg)  +\bigg(\frac{1+\cos\alpha} {1+\cos\eta}\bigg)^{\frac{d-1}{2}} \sqrt{\frac{1+\cos\alpha}{\cos\eta-\cos\alpha}} \bigg\} + F(\eta),
\end{equation}
where $ 0  \leq \eta \leq \alpha$.

The expression in braces on the right hand side of $(\ref{ie11})$ represents (save for a normalizing constant) the equilibrium density of the spherical cap centered at the North Pole, for the case of no external field. Our goal at this stage will be to transform this expression into a form first obtained in \cite[p. 780, expression (44)]{bds1}, for the case of general Riesz potential. This will prove useful in our further considerations. 

Recalling that 
\begin{equation}\label{hypergeom-to-beta}
{}_2 F_1(1,a+b;a+1;z) = \frac {a} {z^a\, (1-z)^b} \, \Beta(z;a,b),
\end{equation}
and denoting for brevity $t:=\cos\alpha$, $u:=\cos\eta$, we see that 
\begin{align*}
 \Beta\bigg(\frac {\cos\eta - \cos\alpha} {1 + \cos\eta}; \frac{1}{2}, \frac{d}{2} \bigg) & =  \Beta\bigg(\frac {u - t} {1 + u}; \frac{1}{2}, \frac{d}{2} \bigg) \\
 & = \frac {\Gamma(1/2)} {\Gamma(3/2)} \bigg( \frac {u-t} {1+u} \bigg)^{1/2} \bigg( \frac {1+t} {1+u} \bigg)^{d/2} {}_2 F_1\bigg (1, \frac {d+1} {2};\frac {3} {2};  \frac {u-t} {1+u} \bigg) \\
 & = 2 \, \bigg( \frac {u-t} {1+u} \bigg)^{1/2} \bigg( \frac {1+t} {1+u} \bigg)^{d/2} {}_2 F_1\bigg (1, \frac {d+1} {2};\frac {3} {2};  \frac {u-t} {1+u} \bigg).
\end{align*}
Therefore,
\begin{align*}
 \frac {d-1}{2}\,  \Beta\bigg(\frac {\cos\eta - \cos\alpha} {1 + \cos\eta}; \frac{1}{2}, \frac{d}{2} \bigg) & + \bigg(\frac{1+\cos\alpha} {1+\cos\eta}\bigg)^{(d-1)/2} \sqrt{\frac{1+\cos\alpha}{\cos\eta-\cos\alpha}}  \\
& =  \frac {d-1}{2}\,  \Beta\bigg(\frac {u - t} {1 + u}; \frac{1}{2}, \frac{d}{2} \bigg) + \bigg(\frac{1+t} {1+u}\bigg)^{(d-1)/2} \bigg(\frac{1+t}{u-t} \bigg)^{1/2} \\
& = \bigg(\frac{1+t}{1+u} \bigg)^{(d-1)/2} \bigg(\frac{1+t}{u-t} \bigg)^{1/2} \, \bigg\{ 1 + (d-1) \, \frac {u-t} {1+u} \,  {}_2 F_1\bigg (1, \frac {d+1} {2};\frac {3} {2};  \frac {u-t} {1+u} \bigg) \bigg\}.
\end{align*}

\noindent For brevity, let $z:=(u-t)/(1+u)$. We will be working with the term $z\, {}_2 F_1 (1, (d+1)/ 2; 3/2; z) = z\, {}_2 F_1 (1, (d-1)/ 2+1; 1/2+1; z)$, appearing in the right hand side of the last expression. According to \cite[p. 558, \# 15.2.20]{abram},
\[
c\, (1-z)\,  {}_2 F_1 (a,b;c;z) - c \,  {}_2 F_1 (a-1,b;c;z) + (c-b)z \,  {}_2 F_1 (a,b;c+1;z) = 0,  
\]
which, in cojunction with the fact $ {}_2 F_1 (0,b;c;z)=1$, entails
\begin{equation}\label{hyp1}
z\, {}_2 F_1\bigg (1, \frac {d-1} {2}+1;\frac {1} {2}+1; z \bigg) = \frac {1}{d} \bigg\{ (1-z)\, {}_2 F_1\bigg (1, \frac {d-1} {2}+1;\frac {1} {2};  z \bigg) - 1 \bigg\}.
\end{equation}

\noindent Furthermore, \cite[p. 558, \# 15.2.14]{abram} states that
\[
 (b-a)\,  {}_2 F_1 (a,b;c;z) + a \,  {}_2 F_1 (a+1,b;c;z) - b \,  {}_2 F_1 (a,b+1;c;z) = 0,  
\]
which in our circumstances is equivalent to
\begin{equation}\label{hyp2}
{}_2 F_1\bigg (1, \frac {d-1} {2}+1;\frac {1} {2}; z \bigg) = \frac {1} {d-1} \bigg\{ (d-3)\, {}_2 F_1\bigg (1, \frac {d-1} {2};\frac {1} {2}; z \bigg) +2 \,  {}_2 F_1\bigg (2, \frac {d-1} {2};\frac {1} {2}; z \bigg) \bigg\}.
\end{equation}

\noindent Inserting $(\ref{hyp2})$ into $(\ref{hyp1})$, we easily obtain that
\begin{empheq}{align}\label{hyp3}
z\, {}_2 F_1\bigg (1, \frac {d-1} {2}+1;\frac {1} {2}+1; z \bigg) =  \frac {1}{d(d-1)} \bigg\{ & (d-3) (1-z)\, {}_2 F_1\bigg (1, \frac {d-1} {2};\frac {1} {2};  z \bigg) \\ \nonumber 
 & + 2 (1-z) \,  {}_2 F_1\bigg (2, \frac {d-1} {2};\frac {1} {2};  z \bigg) - (d-1) \bigg\}.
\end{empheq}

\noindent It can be shown, for example, by MATHEMATICA, that 
\begin{equation}\label{hyp4}
{}_2 F_1 (2, b;c; z) = \frac {1} {z-1} \bigg\{ (c - 2 + z (1-b)) \, {}_2 F_1 (1, b;c; z) + 1 - c \bigg\}.
\end{equation}

\noindent Substituting $(\ref{hyp4})$ into $(\ref{hyp3})$ and simplifying, we deduce that
\begin{equation*}
z\, {}_2 F_1\bigg (1, \frac {d-1} {2}+1;\frac {1} {2}+1; z \bigg) =  \frac {1}{d-1} \bigg\{ {}_2 F_1\bigg (1, \frac {d-1} {2};\frac {1} {2};  z \bigg) - 1 \bigg\}.
\end{equation*}

\noindent We thus demonstrated that 
\begin{align*}
 \frac {d-1}{2}\,  \Beta\bigg(\frac {u - t} {1 + u}; \frac{1}{2}, \frac{d}{2} \bigg) & + \bigg(\frac{1+t} {1+u}\bigg)^{(d-1)/2} \bigg(\frac{1+t}{u-t} \bigg)^{1/2} \\ 
& = \bigg(\frac{1+t}{1+u} \bigg)^{(d-1)/2} \bigg(\frac{1+t}{u-t} \bigg)^{1/2} \, {}_2 F_1\bigg (1, \frac {d-1} {2};\frac {1} {2};  \frac{u-t}{1+u} \bigg)  \\ 
& = \bigg(\frac{1+\cos\alpha}{1+\cos\eta} \bigg)^{(d-1)/2} \bigg(\frac{1+\cos\alpha}{\cos\eta-\cos\alpha} \bigg)^{1/2} \, {}_2 F_1\bigg (1, \frac {d-1} {2};\frac {1} {2};  \frac{\cos\eta-\cos\alpha}{1+\cos\eta} \bigg)
\end{align*}

\noindent Expression $(\ref{ie11})$ can now be written as 
\begin{equation}\label{ie12}
f(\eta)  =    \frac  { F_Q\, \Gamma((d-1)/2)} {2\, \pi^{(d+1)/2}} \, \bigg(\frac{1+\cos\alpha}{1+\cos\eta} \bigg)^{\frac{d-1}{2}}  \bigg(\frac{1+\cos\alpha}{\cos\eta-\cos\alpha} \bigg)^{\frac{1}{2}} \, {}_2 F_1\bigg (1, \frac {d-1} {2};\frac {1} {2};  \frac{\cos\eta-\cos\alpha}{1+\cos\eta} \bigg) + F(\eta),
\end{equation}
where $ 0 \leq \eta \leq \alpha$.
% -- calculation of the Robin constant

Next, we compute the Robin constant $F_Q$. Recall that $\mu_Q$ is a probability measure, so that its mass is $1$. Therefore,
\begin{equation}\label{mmo}
1=\int d\mu_Q = \int_0^\alpha \int_{\S^{d-2}} f(\eta)\, \sin^{d-2}\eta \, d\sigma_{d-1}\, d\eta.
\end{equation}
Inserting expression $(\ref{ie12})$ into $(\ref{mmo})$, we obtain
\begin{empheq}{align}\label{constfq}
& \frac{\Gamma((d-1)/2)}{2\pi^{(d-1)/2}}  = \frac{1}{\omega_{d-1}}    =  \int_0^\alpha f(\eta) \, \sin^{d-2}\eta \, d\eta \\ \nonumber
	& = \int_0^\alpha F(\eta) \, \sin^{d-2}\eta \, d\eta \\ \nonumber
	& + \frac  { F_Q\, \Gamma((d-1)/2)} {2\, \pi^{(d+1)/2}}  \, \int_0^\alpha \bigg(\frac{1+\cos\alpha}{1+\cos\eta} \bigg)^{\frac{d-1}{2}} \bigg(\frac{1+\cos\alpha}{\cos\eta-\cos\alpha} \bigg)^{\frac{1}{2}} \, {}_2 F_1\bigg (1, \frac {d-1} {2};\frac {1} {2};  \frac{\cos\eta-\cos\alpha}{1+\cos\eta} \bigg) \, \sin^{d-2}\eta \, d\eta.
\end{empheq}

\noindent To evaluate the second integral in the right hand side of the last expression, we will be making use of the following result \cite[Lemma A.1, p. 40]{bdsArxiv}.
\begin{lemma}\label{lemmabds}
Assume $-1 \leq a < b < c \leq 1$ and $|y|\leq1$. Set $x:=(b-a)/(c-a)$. Then for all $\alpha, \beta, \gamma >0$ such that $\beta + \gamma > \alpha$, one has
\begin{empheq}{align*}
 \int_a^b (u-a)^{\beta-1}\, (b-u)^{\gamma-1}\, (c-u)^{-\alpha} \, & {}_2 F_1\bigg (\alpha, \beta; \gamma; y\, \frac{b-u}{c-u} \bigg) \,du \\ 
& = \frac {\Gamma(\beta) \, \Gamma(\gamma)} {\Gamma(\beta+\gamma-\alpha) \, \Gamma(\alpha)} \, (b-a)^{\beta+\gamma-1} \, (c-a)^{-\gamma} \, (c-b)^{\gamma - \alpha} \, (1-xy)^{-\beta} \\ 
& \times  \int_0^1 t^{\beta+\gamma-\alpha-1} \, (1-t)^{\alpha-1}\, (1-xt)^{\beta-\gamma} \, \bigg( 1 - \frac {x(1-y)} {1-xy}\, t  \bigg)^{-\beta} \, dt.
\end{empheq}
\end{lemma}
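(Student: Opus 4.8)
The plan is to evaluate the left-hand side by expanding the hypergeometric factor as a power series, integrating term by term, and recognizing the outcome as an Appell function $F_1$; the right-hand side is then identified as the Euler integral of a second $F_1$, and the two are matched through a transformation formula.

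First I would observe that for $u\in[a,b]$ the quantity $(b-u)/(c-u)$ increases from $0$ at $u=b$ to $x=(b-a)/(c-a)$ at $u=a$, so the argument of the hypergeometric function obeys $|y(b-u)/(c-u)|\le x<1$. Hence the defining series of ${}_2F_1(\alpha,\beta;\gamma;\,\cdot\,)$ converges uniformly on $[a,b]$ and may be integrated term by term. After the substitution $u=a+(b-a)s$ each term is a standard Euler integral,
\[
\int_0^1 s^{\beta-1}(1-s)^{\gamma+n-1}(1-xs)^{-\alpha-n}\,ds=\frac{\Gamma(\beta)\,\Gamma(\gamma+n)}{\Gamma(\beta+\gamma+n)}\,{}_2F_1(\alpha+n,\beta;\beta+\gamma+n;x),
\]
so the left-hand side becomes $\Gamma(\beta)(b-a)^{\beta+\gamma-1}(c-a)^{-\alpha}$ times a single sum over $n$ of these hypergeometric functions weighted by $(xy)^n$.

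Next I would expand the inner ${}_2F_1(\alpha+n,\beta;\beta+\gamma+n;x)$ in its own series and resum. Using $(\alpha)_n(\alpha+n)_m=(\alpha)_{n+m}$, $(\gamma)_n^{-1}\Gamma(\gamma+n)=\Gamma(\gamma)$, and $\Gamma(\beta+\gamma+n+m)^{-1}=\Gamma(\beta+\gamma)^{-1}(\beta+\gamma)_{n+m}^{-1}$, the double sum collapses to the Appell series, giving
\[
\text{LHS}=\frac{\Gamma(\beta)\,\Gamma(\gamma)}{\Gamma(\beta+\gamma)}\,(b-a)^{\beta+\gamma-1}(c-a)^{-\alpha}\,F_1\bigl(\alpha;\beta,\beta;\beta+\gamma;x,xy\bigr),
\]
the rearrangement being legitimate by absolute convergence, since $|x|<1$ and $|xy|<1$. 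On the right-hand side, the remaining $t$-integral is exactly the Euler representation of $F_1$ with parameters $a'=\beta+\gamma-\alpha$, $c'=\beta+\gamma$ and variable pairs $(\gamma-\beta,x)$, $(\beta,\tfrac{x(1-y)}{1-xy})$, which is valid because $\beta+\gamma>\beta+\gamma-\alpha>0$. Cancelling the gamma prefactors and using $c-b=(1-x)(c-a)$ to rewrite $(c-a)^{-\gamma}(c-b)^{\gamma-\alpha}=(c-a)^{-\alpha}(1-x)^{\gamma-\alpha}$, the asserted identity reduces to the pure $F_1$ transformation
\[
F_1\bigl(\alpha;\beta,\beta;\beta+\gamma;x,xy\bigr)=(1-x)^{\gamma-\alpha}(1-xy)^{-\beta}\,F_1\Bigl(\beta+\gamma-\alpha;\gamma-\beta,\beta;\beta+\gamma;x,\tfrac{x(1-y)}{1-xy}\Bigr).
\]

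Finally I would establish this transformation by composing two standard formulas for $F_1$: the reduction $F_1(a;b_1,b_2;c;x_1,x_2)=(1-x_1)^{-a}F_1(a;c-b_1-b_2,b_2;c;\tfrac{x_1}{x_1-1},\tfrac{x_2-x_1}{1-x_1})$, applied with $(a,b_1,b_2,c)=(\alpha,\beta,\beta,\beta+\gamma)$ and $(x_1,x_2)=(x,xy)$ so that $c-b_1-b_2=\gamma-\beta$, followed by the Euler transformation $F_1(a;b_1,b_2;c;x_1,x_2)=(1-x_1)^{-b_1}(1-x_2)^{-b_2}F_1(c-a;b_1,b_2;c;\tfrac{x_1}{x_1-1},\tfrac{x_2}{x_2-1})$. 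A direct computation of the four transformed arguments returns precisely $x$ and $\tfrac{x(1-y)}{1-xy}$ and yields the prefactor $(1-x)^{\gamma-\alpha}(1-xy)^{-\beta}$. I expect this last step to be the main obstacle: one must carry out the composition carefully and track the prefactors, noting that an intermediate argument such as $x/(x-1)$ may leave the unit polydisc, so the chain is to be read as an identity between analytic continuations of $F_1$ rather than between convergent series. Everything else is routine Euler-integral evaluation and manipulation of Pochhammer symbols.
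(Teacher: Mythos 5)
Your proof is correct --- I checked the key algebra. The term-by-term integration does collapse the left-hand side to $\frac{\Gamma(\beta)\Gamma(\gamma)}{\Gamma(\beta+\gamma)}(b-a)^{\beta+\gamma-1}(c-a)^{-\alpha}F_1\bigl(\alpha;\beta,\beta;\beta+\gamma;x,xy\bigr)$; the $t$-integral on the right is indeed the Euler representation of $F_1\bigl(\beta+\gamma-\alpha;\gamma-\beta,\beta;\beta+\gamma;x,\tfrac{x(1-y)}{1-xy}\bigr)$ (the hypotheses $\alpha>0$ and $\beta+\gamma>\alpha$ are exactly what that representation requires); and composing your two transformations in the stated order gives arguments $x$ and $\tfrac{x(1-y)}{1-xy}$ with total prefactor $(1-x)^{-\alpha}\cdot(1-x)^{\gamma-\beta}\cdot(1-x)^{\beta}(1-xy)^{-\beta}=(1-x)^{\gamma-\alpha}(1-xy)^{-\beta}$, which together with $(c-a)^{-\gamma}(c-b)^{\gamma-\alpha}=(c-a)^{-\alpha}(1-x)^{\gamma-\alpha}$ closes the identity. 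Be aware, however, that the paper does not prove this lemma at all: it is quoted from \cite{bdsArxiv} (Lemma A.1 there), so the only thing to compare against is the technique the paper uses on the analogous integrals that actually arise, in the proofs of Theorems \ref{msfptch} and \ref{msf-quadratic-ext-field-theo}, where the author says he argues ``along the lines of the proof of Lemma A.1.'' That technique is different from yours and more elementary: substitute $(b-u)/(c-u)=xv$, which maps $[a,b]$ onto $[0,1]$ and produces at once the prefactors $(b-a)^{\beta+\gamma-1}(c-a)^{-\gamma}(c-b)^{\gamma-\alpha}$, reducing the lemma to a one-variable identity for $\int_0^1 v^{\gamma-1}(1-v)^{\beta-1}(1-xv)^{\alpha-\beta-\gamma}\,{}_2F_1(\alpha,\beta;\gamma;xyv)\,dv$, which is then handled by expanding the ${}_2F_1$ and integrating term by term --- no two-variable hypergeometric functions appear. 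Your route buys modularity and transparency (each step is a named classical identity, and the Appell function exposes the symmetry between the two sides) at the cost of importing the $F_1$ transformation theory. As for the obstacle you flag: it dissolves if you derive both transformations from the Euler integral $F_1(a;b_1,b_2;c;x_1,x_2)=\frac{\Gamma(c)}{\Gamma(a)\Gamma(c-a)}\int_0^1 t^{a-1}(1-t)^{c-a-1}(1-x_1t)^{-b_1}(1-x_2t)^{-b_2}\,dt$, valid for $c>a>0$ and $x_1,x_2\in\mathbb{C}\setminus[1,\infty)$; all three $F_1$'s in your chain satisfy the parameter condition under the lemma's hypotheses, and the intermediate arguments $x/(x-1)$ and $-x(1-y)/(1-x)$ are nonpositive, hence stay off the cut, so the chain is an identity between convergent integrals rather than between (possibly divergent) series.
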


\noindent We first bring the second integral in the right hand side of $(\ref{constfq})$ into a form that can be handled by invoking Lemma \ref{lemmabds}. Using trivial substitutions, we write

\begin{align*}
 \int_0^\alpha  \bigg(\frac{1+\cos\alpha}{1+\cos\eta} \bigg)^{\frac{d-1}{2}} \, & \bigg(\frac{1+\cos\alpha}{\cos\eta-\cos\alpha} \bigg)^{\frac{1}{2}} \, {}_2 F_1\bigg (1, \frac {d-1} {2};\frac {1} {2};  \frac{\cos\eta-\cos\alpha}{1+\cos\eta} \bigg) \, \sin^{d-2}\eta \, d\eta \\
& = (1+t^*)^{d/2} \, \int_{-1}^{t^*} (t^*-u)^{-1/2} \, (1-u)^{-1} \, (1+u)^{(d-3)/2} \, {}_2 F_1\bigg (1, \frac {d-1} {2};\frac {1} {2};  \frac{t^*-u}{1-u} \bigg) \, du,
\end{align*}
where we set $t^*:=-\cos\alpha$. 

Applying Lemma \ref{lemmabds} with $a=-1, b=t^*, c=1, y=1 $ and $\alpha=1, \beta=(d-1)/2, \gamma = 1/2$, while reasoning along the lines of the proof of Lemma 30 in \cite[p. 782]{bds1}, we find that
\begin{align*}
 (1+t)^{d/2} \, \int_{-1}^{t^*} (t^*-u)^{-1/2} \, (1-u)^{-1} \, (1+u)^{(d-3)/2} \, & {}_2 F_1\bigg (1, \frac {d-1} {2};\frac {1} {2};  \frac{t^*-u}{1-u} \bigg) \, du \\
& =  \frac {\sqrt{\pi} \, 2^{d-2} \, \Gamma{((d-1)/2)} } {\Gamma(d/2-1)} \, \Beta\bigg(\sin^2\bigg(\frac {\alpha} {2} \bigg); \frac{d-2}{2}, \frac{d}{2} \bigg).
\end{align*}
Thus we conclude that 
\begin{empheq}{align}\label{robint1}
 \int_0^\alpha  \bigg(\frac{1+\cos\alpha}{1+\cos\eta} \bigg)^{\frac{d-1}{2}} \, \bigg(\frac{1+\cos\alpha}{\cos\eta-\cos\alpha} \bigg)^{\frac{1}{2}} \,   {}_2 F_1\bigg (1, & \frac {d-1} {2};\frac {1} {2};  \frac{\cos\eta-\cos\alpha}{1+\cos\eta} \bigg) \, \sin^{d-2}\eta \, d\eta \\ \nonumber 
 & = \frac {\sqrt{\pi} \, 2^{d-2} \, \Gamma{((d-1)/2)} } {\Gamma(d/2-1)} \, \Beta\bigg(\sin^2\bigg(\frac {\alpha} {2} \bigg); \frac{d-2}{2}, \frac{d}{2} \bigg).
\end{empheq}

\noindent Substituting $(\ref{robint1})$ into $(\ref{constfq})$, it follows that 
\begin{equation}\label{robinconstNP}
 F_Q  = \frac {\pi^{d/2} \, \Gamma(d/2-1)} {2^{d-3}\, (\Gamma((d-1)/2))^2 } \, \bigg(\Beta\bigg(\sin^2\bigg(\frac {\alpha} {2} \bigg); \frac{d-2}{2}, \frac{d}{2} \bigg) \bigg)^{-1} \,  \bigg\{  \frac{\Gamma((d-1)/2)}{2\pi^{(d-1)/2}} -  \int_0^\alpha F(\eta) \, \sin^{d-2}\eta \, d\eta \bigg\}.
\end{equation}
In light of $(\ref{robinconstNP})$, the expression for the equilibrium density $(\ref{ie12})$ can be written as
\begin{equation*}
f(\eta)  =   C_Q  \bigg(\frac{1+\cos\alpha}{1+\cos\eta} \bigg)^{\frac{d-1}{2}}  \bigg(\frac{1+\cos\alpha}{\cos\eta-\cos\alpha} \bigg)^{\frac{1}{2}} \, {}_2 F_1\bigg (1, \frac {d-1} {2};\frac {1} {2};  \frac{\cos\eta-\cos\alpha}{1+\cos\eta} \bigg) + F(\eta), \quad 0 \leq \eta \leq \alpha,
\end{equation*}
with the constant $C_Q$ given by
\begin{equation*}
C_Q  =  \frac {\Gamma(d/2-1)} {2^{d-2}\, \sqrt{\pi}\, \Gamma((d-1)/2) } \, \bigg(\Beta\bigg(\sin^2\bigg(\frac {\alpha} {2} \bigg); \frac{d-2}{2}, \frac{d}{2} \bigg) \bigg)^{-1} \,  \bigg\{  \frac{\Gamma((d-1)/2)}{2\pi^{(d-1)/2}} -  \int_0^\alpha F(\eta) \, \sin^{d-2}\eta \, d\eta \bigg\}.
\end{equation*}
\qed
% -------------------- Theorem 1.4 ---------------------------------- Theorem 1.4 ------------------------- Theorem 1.4 ---------------------------------- Theorem 1.4 ------------------------

\noindent{\bf Proof of Theorem \ref{theo3}.} If $S_Q=C_{S,\alpha}$, equation $(\ref{ieec})$ assumes the form
\begin{equation}\label{ie21}
\frac{2\pi^{(d-2)/2}} {\Gamma((d-2)/2)}  \int_\alpha^\pi  f(\eta) \sin^{d-2}\eta \, d\eta \, \int_0^\pi \frac {\sin^{d-3}\xi \,d\xi} {(2 - 2\gamma)^{(d-2)/2}} = F_Q - Q(\theta), \quad \alpha \leq \theta \leq \pi,
\end{equation}
where $\gamma = \cos\theta \cos\eta + \sin\theta \sin\eta \cos\xi$. Via the change of variables $\widetilde{\theta} = \pi - \theta$, we transform $(\ref{ie21})$ into 
\begin{equation}\label{ie22}
\frac{2\pi^{(d-2)/2}} {\Gamma((d-2)/2)}  \int_0^\beta  f_0(\eta) \sin^{d-2} \eta \, d\eta \, \int_0^\pi \frac { \sin^{d-3} \xi \,d\xi} {(2 - 2\widetilde{\gamma})^{(d-2)/2}} = F_Q - Q_0(\widetilde{\theta}), \quad 0 \leq \widetilde{\theta} \leq \beta,
\end{equation}
with $\beta=\pi-\alpha$, $f_0(\widetilde{\eta})=f(\pi-\widetilde{\eta})$, $Q_0(\widetilde{\theta})=Q(\pi-\widetilde{\theta})$, and $\widetilde{\gamma} = \cos \widetilde{\theta} \cos \eta + \sin \widetilde{\theta} \sin\eta \cos\xi$. The integral equation $(\ref{ie22})$ is of the form $(\ref{ie1})$. Hence Theorem \ref{theo2} applies, and we obtain
\begin{equation*}
F_0(\widetilde{\eta}) =  \frac  { \Gamma((d-2)/2)} {2\, \pi^{(d+2)/2}}  \frac {1} {\sin \widetilde{\eta} } \,\sec^{d-3}\bigg(\frac{\widetilde{\eta}}{2}\bigg) \, \frac {d} {d\widetilde{\eta}} \int_{\widetilde{\eta}}^\beta \frac {g_0(\zeta) \sin \zeta \, d\zeta} {\sqrt{\cos \widetilde{\eta} - \cos \zeta}}, \quad 0  \leq \widetilde{\eta} \leq \beta,
\end{equation*}
where
\begin{equation*}
g_0(\zeta) = \cot^{d-3}\bigg(\frac{\zeta}{2}\bigg)\, \frac {d} {d\zeta} \int_0^\zeta \frac {Q_0(\theta)\,  \sin^{d-3}(\theta/2) \sin\theta \, d\theta} {\sqrt{\cos\theta - \cos \zeta}}, \quad 0 \leq \zeta \leq \beta.
\end{equation*}
The density $f_0$ of the equilibrium measure $\mu_{Q_0}$ is
\begin{equation*}
f_0(\widetilde{\eta})  =   C_Q  \bigg(\frac{1+\cos\beta}{1+\cos\widetilde{\eta}} \bigg)^{\frac{d-1}{2}}  \bigg(\frac{1+\cos\beta}{\cos\widetilde{\eta}-\cos\beta} \bigg)^{\frac{1}{2}} \, {}_2 F_1\bigg (1, \frac {d-1} {2};\frac {1} {2};  \frac{\cos\widetilde{\eta}-\cos\beta}{1+\cos\widetilde{\eta}} \bigg) + F_0(\widetilde{\eta}), \quad 0  \leq \widetilde{\eta} \leq \beta,
\end{equation*}
where the constant $C_Q$ is given by
\begin{equation*}
C_Q  = \frac {\Gamma(d/2-1)} {2^{d-2}\, \sqrt{\pi} \, \Gamma((d-1)/2) } \, \bigg(\Beta\bigg(\sin^2\bigg(\frac {\beta} {2} \bigg); \frac{d-2}{2}, \frac{d}{2} \bigg) \bigg)^{-1} \,  \bigg\{  \frac{\Gamma((d-1)/2)}{2\pi^{(d-1)/2}} -  \int_0^\beta F_0(\widetilde{\eta}) \, \sin^{d-2}\widetilde{\eta} \, d\widetilde{\eta} \bigg\}.
\end{equation*}
Going back to the $\eta$ variable via $\eta= \pi - \widetilde{\eta}$, after some algebra, we find
\begin{equation*}
g(\zeta) = \tan^{d-3}\bigg(\frac{\zeta}{2}\bigg)\, \frac {d} {d\zeta} \int_\zeta^\pi \frac {Q(\theta)\,  \cos^{d-3}(\theta/2) \sin\theta \, d\theta} {\sqrt{\cos \zeta-\cos\theta}}, \quad \alpha \leq \zeta \leq \pi,
\end{equation*}
so that
\begin{equation*}
F(\eta) = \frac {\Gamma((d-2)/2)} {2\pi^{(d+2)/2}} \frac {1} {\sin\eta } \,\csc^{d-3}\bigg(\frac{\eta}{2}\bigg) \, \frac {d} {d\eta} \int_\alpha^{\eta}\frac {g(\zeta) \sin \zeta \, d\zeta} {\sqrt{\cos\zeta - \cos \eta}}, \quad \alpha \leq \eta \leq \pi.
\end{equation*}
The constant $C_Q$ has the form
\begin{equation*}
C_Q  = \frac {\Gamma(d/2-1)} {2^{d-2}\, \sqrt{\pi}\, \Gamma((d-1)/2) } \, \bigg(\Beta\bigg(\cos^2\bigg(\frac {\alpha} {2} \bigg); \frac{d-2}{2}, \frac{d}{2} \bigg) \bigg)^{-1} \,  \bigg\{  \frac{\Gamma((d-1)/2)}{2\pi^{(d-1)/2}} -  \int_\alpha^\pi F(\eta) \, \sin^{d-2}\eta \, d\eta \bigg\}.
\end{equation*}
We thus conclude that the equilibrium density, when support is a spherical cap centered at the South Pole, is given by
\begin{equation*}
f(\eta)  =   C_Q  \bigg(\frac{1-\cos\alpha}{1-\cos\eta} \bigg)^{\frac{d-1}{2}}  \bigg(\frac{1-\cos\alpha}{\cos\alpha-\cos\eta} \bigg)^{\frac{1}{2}} \, {}_2 F_1\bigg (1, \frac {d-1} {2};\frac {1} {2};  \frac{\cos\alpha-\cos\eta}{1-\cos\eta} \bigg) + F(\eta), \quad \alpha \leq \eta \leq \pi.
\end{equation*}
\qed

% ------------------------------------------------------------------------------- Theorem 2.4 -----------------------------------------------------------------------------------

\noindent{\bf Proof of Theorem \ref{msfptch}.} Recall that the external field $Q$ in question is given by $(\ref{extfieldexam})$, that is
\[
Q(\eta) = q\,(1-\cos\eta)^{-(d-2)/2}, \quad q>0, \quad 0 \leq \eta \leq \pi.
\]
Substituting this expression into formula $(\ref{msfunc})$ for the $\cF$-functional, we are lead to the following integral
\begin{equation*}
\int_\alpha^\pi  \bigg(\frac{1-\cos\alpha}{1-\cos\eta} \bigg)^{\frac{d-1}{2}}  \bigg(\frac{1-\cos\alpha}{\cos\alpha-\cos\eta} \bigg)^{\frac{1}{2}} {}_2 F_1\bigg (1, \frac {d-1} {2};\frac {1} {2};  \frac{\cos\alpha-\cos\eta}{1-\cos\eta} \bigg) \, (1-\cos\eta)^{-\frac{(d-2)}{2} } \sin^{d-2}\eta\, d\eta.
\end{equation*}
Letting $t:=\cos\alpha$, $u:=\cos\eta$, after some simple algebra, we obtain that
\begin{empheq}{align}\label{msfuncex_int2}
& \int_\alpha^\pi  \bigg(\frac{1-\cos\alpha}{1-\cos\eta} \bigg)^{\frac{d-1}{2}}  \bigg(\frac{1-\cos\alpha}{\cos\alpha-\cos\eta} \bigg)^{\frac{1}{2}} {}_2 F_1\bigg (1, \frac {d-1} {2};\frac {1} {2};  \frac{\cos\alpha-\cos\eta}{1-\cos\eta} \bigg) \, (1-\cos\eta)^{-\frac{(d-2)}{2} } \sin^{d-2}\eta\, d\eta \\ \nonumber
&= (1-t)^{d/2} \, \int_{-1}^t (1-u)^{-d/2} \, (1+u)^{(d-3)/2} \, (t-u)^{-1/2} \, {}_2 F_1\bigg (1, \frac {d-1} {2};\frac {1} {2};  \frac{t-u}{1-u} \bigg) \, du.
\end{empheq}
Hence, our original integral further reduces to 
\begin{equation}\label{msfuncex_int3}
\int_{-1}^t (1-u)^{-d/2} \, (1+u)^{(d-3)/2} \, (t-u)^{-1/2} \, {}_2 F_1\bigg (1, \frac {d-1} {2};\frac {1} {2};  \frac{t-u}{1-u} \bigg) \, du.
\end{equation}
The integral in $(\ref{msfuncex_int3})$ closely resembles the integral appearing in Lemma \ref{lemmabds}. To evaluate integral $(\ref{msfuncex_int3})$, we will develop an argument similar to the proof of Lemma A.1 in \cite{bdsArxiv}. Set $a=-1$, $b=t$, $c=1$, $\gamma=1/2$ and $\beta=(d-1)/2$. Also, we let $x=(b-a)/(c-a)$, so that $0<x<1$. Introducing the substitution $(b-u)/(c-u)=xv$, after simplifications we deduce that
\begin{empheq}{align}\label{msfuncex_int4}
& \int_{-1}^t (1-u)^{-d/2} \, (1+u)^{(d-3)/2} \, (t-u)^{-1/2} \, {}_2 F_1\bigg (1, \frac {d-1} {2};\frac {1} {2};  \frac{t-u}{1-u} \bigg) \, du \\ \nonumber
& = (b-a)^{\gamma+\beta-1}\, (c-a)^{-\gamma} \, (c-b)^{\gamma - d/2}\, \int_0^1 v^{\gamma-1} \, (1-v)^{\beta-1} \,  {}_2 F_1 (1, \beta;\gamma; xv ) \, dv \\ \nonumber
& = (b-a)^{\gamma+\beta-1}\, (c-a)^{-\gamma} \, (c-b)^{\gamma - d/2}\, I,
\end{empheq}
where
\begin{equation}\label{msfuncex_integI}
I:= \int_0^1 v^{\gamma-1} \, (1-v)^{\beta-1} \,  {}_2 F_1 (1, \beta;\gamma; xv ) \, dv.
\end{equation}
Substituting the series expansion for the Gauss hypergeometric function $(\ref{gausshyperdef})$ into the above integral and integrating term-by-term, we find
\begin{align*}
I & = \int_0^1 v^{\gamma-1} \, (1-v)^{\beta-1} \,  {}_2 F_1 (1, \beta;\gamma; xv ) \, dv \\
& =  \int_0^1 v^{\gamma-1} \, (1-v)^{\beta-1} \,  dv \,  \sum_{n=0}^\infty \frac {(1)_n\, (\beta)_n} {(\gamma)_n \, n!} \, x^n \, v^n \\
& = \sum_{n=0}^\infty \frac {(\beta)_n} {(\gamma)_n} \, x^n \, \int_0^1 v^{n + \gamma - 1} \, (1-v)^{\beta-1} \,  dv \\
& =  \sum_{n=0}^\infty \frac {(\beta)_n} {(\gamma)_n} \, x^n \, \Beta(n+\gamma,\beta) \\
& =  \sum_{n=0}^\infty \frac {(\beta)_n} {(\gamma)_n} \frac {\Gamma(n+\gamma)\, \Gamma(\beta)} {\Gamma(n+\gamma+\beta)} \, x^n.
\end{align*}
Taking into account that $(a)_n = \Gamma(a+n)/\Gamma(a)$, we further obtain
\begin{empheq}{align}\label{msfuncex_int5}
I & = \sum_{n=0}^\infty \frac {(\beta)_n} {(\gamma)_n} \frac {\Gamma(n+\gamma)\, \Gamma(\beta)} {\Gamma(n+\gamma+\beta)} \, x^n \\ \nonumber
	& = \Gamma(\gamma) \,  \sum_{n=0}^\infty \frac {\Gamma(\beta+n)}{\Gamma(\beta+\gamma+n)} \, x^n \\ \nonumber
	& = \Gamma(1/2) \,  \sum_{n=0}^\infty \frac {\Gamma(n+(d-1)/2)}{\Gamma(n+d/2)} \, x^n \\ \nonumber
	& = \frac {\sqrt{\pi}\, \Gamma((d-1)/2)} {\Gamma(d/2)} \, {}_2 F_1\bigg (1, \frac {d-1} {2};\frac {d} {2};  x \bigg).
\end{empheq}
Substituting $(\ref{msfuncex_int5})$ into $(\ref{msfuncex_int4})$, and simplifying, we find that
\begin{empheq}{align*}
& \int_{-1}^t (1-u)^{-d/2} \, (1+u)^{(d-3)/2} \, (t-u)^{-1/2} \, {}_2 F_1\bigg (1, \frac {d-1} {2};\frac {1} {2};  \frac{t-u}{1-u} \bigg) \, du \\ 
& = \frac {\sqrt{\pi}\, \Gamma((d-1)/2)} {\sqrt{2} \, \Gamma(d/2)} \, (1-\cos\alpha)^{-(d-1)/2} \, (1+\cos\alpha)^{(d-2)/2} \, {}_2 F_1\bigg (1, \frac {d-1} {2};\frac {d} {2};  \frac{1+\cos\alpha}{2} \bigg).
\end{empheq}
Inserting the last integral into $(\ref{msfuncex_int2})$, we finally infer that
\begin{empheq}{align}\label{msfuncex_int7}
 \int_\alpha^\pi  Q(\eta) & \, \bigg(\frac{1-\cos\alpha}{1-\cos\eta} \bigg)^{\frac{d-1}{2}}  \bigg(\frac{1-\cos\alpha}{\cos\alpha-\cos\eta} \bigg)^{\frac{1}{2}} {}_2 F_1\bigg (1, \frac {d-1} {2};\frac {1} {2};  \frac{\cos\alpha-\cos\eta}{1-\cos\eta} \bigg) \, \sin^{d-2}\eta\, d\eta \\ \nonumber
& =  \frac {q\, \sqrt{\pi}\, \Gamma((d-1)/2)} {\sqrt{2} \, \Gamma(d/2)} \, (1-\cos\alpha)^{1/2} \, (1+\cos\alpha)^{(d-2)/2} \, {}_2 F_1\bigg (1, \frac {d-1} {2};\frac {d} {2};  \frac{1+\cos\alpha}{2} \bigg).
\end{empheq}
Furthermore, from $(\ref{hypergeom-to-beta})$ it follows that
\begin{equation}\label{ex1transform}
(1-\cos\alpha)^{1/2} \, (1+\cos\alpha)^{(d-2)/2} \, {}_2 F_1\bigg (1, \frac {d-1} {2};\frac {d} {2};  \frac{1+\cos\alpha}{2} \bigg) = \frac {2^{(d-1)/2} \, \Gamma(d/2)} {\Gamma((d-2)/2)} \, \Beta \bigg( \cos^2\bigg(\frac {\alpha} {2} \bigg); \frac{d-2}{2},\frac{1}{2}\bigg),
\end{equation}
thus reducing expression $(\ref{msfuncex_int7})$ to
\begin{empheq}{align}\label{msfuncex_int8}
 \int_\alpha^\pi  Q(\eta)  \, \bigg(\frac{1-\cos\alpha}{1-\cos\eta} \bigg)^{\frac{d-1}{2}} \bigg(\frac{1-\cos\alpha}{\cos\alpha-\cos\eta} \bigg)^{\frac{1}{2}} &  {}_2 F_1\bigg (1,  \frac {d-1} {2};\frac {1} {2};   \frac{\cos\alpha-\cos\eta}{1-\cos\eta} \bigg) \, \sin^{d-2}\eta\, d\eta \\ \nonumber
& =  \frac {q\, \sqrt{\pi}\, 2^{(d-2)/2} \, \Gamma((d-1)/2)} {\Gamma((d-2)/2)} \, \Beta \bigg( \cos^2\bigg(\frac {\alpha} {2} \bigg) ; \frac{d-2}{2},\frac{1}{2}\bigg).
\end{empheq}
Substituting $(\ref{msfuncex_int8})$ into $(\ref{msfunc})$, we finally obtain the desired expression $(\ref{msf-p-ch})$ for the $\cF$-functional,
\begin{empheq}{align}\label{msfex1}
 \cF(C_{S,\alpha})  =  \frac {\sqrt{\pi}\,  \Gamma(d/2-1)} {2^{d-2}\,  \Gamma((d-1)/2)} \, \bigg(\Beta\bigg(\cos^2\bigg(\frac {\alpha} {2} \bigg); & \frac{d-2}{2},  \frac{d}{2} \bigg) \bigg)^{-1}  \times \\ \nonumber
 & \bigg\{ 1 + \frac {q\,  2^{(d-2)/2} \, \Gamma((d-1)/2)} {\sqrt{\pi} \, \Gamma(d/2-1)} \, \Beta \bigg( \cos^2\bigg(\frac {\alpha} {2} \bigg) ; \frac{d-2}{2},\frac{1}{2}\bigg) \bigg\}.
\end{empheq}
\qed

% ------------------------------------------------------------ Example ----------------------------------------------------------------------------------------------------------------------------

\noindent{\bf Proof of Theorem \ref{densexamtheo}.} Assume that the support is a spherical cap $C_{S,\alpha}$, and an external field $Q$ on the sphere $\S^{d-1}$ is given by $(\ref{extfieldexam})$, that is
\[
Q(\theta) = q\,(1-\cos\theta)^{-(d-2)/2}, \quad q>0, \quad \alpha \leq \theta \leq \pi.
\]
The $\cF$-functional for this external field is given by expression $(\ref{msfex1})$. Taking into account that 
\begin{equation}\label{beta-func-diff}
\frac{d}{dz} \, \Beta(z;a,b) = (1-z)^{b-1}\, z^{a-1},
\end{equation}
and differentiating $(\ref{msfex1})$ with respect to $\alpha$, one can show that
\begin{empheq}{align*}
&  \cF'(C_{S,\alpha}) = \frac {\sqrt{\pi}\,  \Gamma(d/2-1)} {2^{d-2}\,  \Gamma((d-1)/2)} \, \bigg(\Beta\bigg(\cos^2\bigg(\frac {\alpha} {2} \bigg);  \frac{d-2}{2},  \frac{d}{2} \bigg) \bigg)^{-2} \, \cos^{d-3}\bigg(\frac {\alpha} {2} \bigg) \, \sin^{d-1}\bigg(\frac {\alpha} {2} \bigg) \times  \\ 
& \bigg\{1 - \frac{q\,2^{(d-2)/2}\, \Gamma((d-1)/2)} {\sqrt{\pi}\,  \Gamma(d/2-1)} \bigg[ \csc^{d-1}\bigg(\frac {\alpha} {2} \bigg) \, \Beta\bigg(\cos^2\bigg(\frac {\alpha} {2} \bigg);  \frac{d-2}{2},  \frac{d}{2} \bigg) - \Beta\bigg(\cos^2\bigg(\frac {\alpha} {2} \bigg);  \frac{d-2}{2},  \frac{1}{2} \bigg) \bigg] \bigg\}.
\end{empheq}
This shows that the critical points of $\cF(C_{S,\alpha})$ satisfy
\begin{equation}\label{msfex1eqn-crit-pnts}
 \csc^{d-1}\bigg(\frac {\alpha} {2} \bigg) \, \Beta\bigg(\cos^2\bigg(\frac {\alpha} {2} \bigg);  \frac{d-2}{2},  \frac{d}{2} \bigg) - \Beta\bigg(\cos^2\bigg(\frac {\alpha} {2} \bigg);  \frac{d-2}{2},  \frac{1}{2} \bigg) = \frac  {\sqrt{\pi}\,  \Gamma(d/2-1)} {q\,2^{(d-2)/2}\, \Gamma((d-1)/2)}.
\end{equation}
The proof of existence and uniqueness of a zero for the function defined by expression $(\ref{msfex1eqn-crit-pnts})$ is exactly the same in \cite[Theorem 13, p. 773]{bds1}. We therefore obtained an equation for finding an angle $\alpha$ that defines the support  $C_{S,\alpha}$ of the extremal measure $\mu_Q$, when the external field is produced by a positive point charge of magnitude $q$, placed at the North Pole of the sphere $\S^{d-1}$. 

We next obtain the expression for the equilibrium density, corresponding to the external field under consideration. Applying Theorem \ref{theo3}, we first compute the auxiliary function $g(\zeta)$, according to $(\ref{auxsSP})$. We are thus led to the following integral, appearing in right hand side of $(\ref{auxsSP})$,
\[
\int_\zeta^\pi \frac {Q(\theta) \, \cos^{d-3}(\theta/2)\, \sin\theta \, d\theta} {\sqrt{\cos\zeta - \cos\theta}} = q\, 2^{-(d-3)/2} \, \int_\zeta^\pi \frac {(1-\cos\theta)^{-(d-2)/2} \, (1+\cos\theta)^{(d-3)/2}\, \sin\theta \, d\theta} {\sqrt{\cos\zeta - \cos\theta}}
\]
Using the substitution $1+\cos\theta=(1+\cos\zeta)\,t$, after a number of easy manipulations, we infer that
 \begin{empheq}{align*}
  \int_\zeta^\pi \frac {(1-\cos\theta)^{-(d-2)/2} \, (1+\cos\theta)^{(d-3)/2}\, \sin\theta \, d\theta} {\sqrt{\cos\zeta - \cos\theta}} & \\ 
  = \cos^{d-2}(\zeta/2)\,   \int_0^1 & t^{(d-3)/2} \, (1-t)^{-1/2} \, (1-\cos^2(\zeta/2) t)^{-(d-2)/2} \, dt \\ 
 =  \frac {\sqrt{\pi} \, \Gamma((d-1)/2)} {\Gamma(d/2)}&  \, \cos^{d-2}\bigg(\frac{\zeta}{2}\bigg) \, _2 F_1\bigg(\frac{d-2}{2},\frac{d-1}{2};\frac{d}{2};\cos^2\bigg(\frac{\zeta}{2}\bigg)\bigg),
\end{empheq}
where we used the integral representation of the hypergeometric function \cite[p. 558, \# 15.3.1]{abram}. Therefore,
\begin{equation*}
\int_\zeta^\pi \frac {Q(\theta) \, \cos^{d-3}(\theta/2)\, \sin\theta \, d\theta} {\sqrt{\cos\zeta - \cos\theta}} =  \frac {q\, \sqrt{\pi} \, \Gamma((d-1)/2)} {2^{(d-3)/2}\, \Gamma(d/2)}  \, \cos^{d-2}\bigg(\frac{\zeta}{2}\bigg) \, _2 F_1\bigg(\frac{d-2}{2},\frac{d-1}{2};\frac{d}{2};\cos^2\bigg(\frac{\zeta}{2}\bigg)\bigg).
\end{equation*}
Differentiating the last expression, and taking into account the fact \cite[p. 556, \#15.2.1]{abram}
\[
\frac {d}{dz}  \,_2 F_1(a,b;c;z) = \frac {ab}{c} \, _2 F_1(a+1,b+1;c+1;z), 
\]
we find, upon inserting the result of the differentiation into $(\ref{auxsSP})$,
\begin{empheq}{align}\label{functg}
g(\zeta) = -  \frac {q\, \sqrt{\pi}\, (d-2) \, \Gamma((d-1)/2)}  {2^{(d-1)/2}\, \Gamma(d/2)}  \, \sin^{d-2}\bigg(\frac{\zeta}{2}\bigg) \, \bigg\{ & {}_2 F_1\bigg(\frac{d-2}{2},\frac{d-1}{2};\frac{d}{2};\cos^2\bigg(\frac{\zeta}{2}\bigg)\bigg) \\\nonumber 
+  \frac{d-1}{d} \, \cos^2\bigg(\frac{\zeta}{2}\bigg) \,  &  {}_2 F_1\bigg(\frac{d}{2},\frac{d+1}{2};\frac{d}{2}+1;\cos^2\bigg(\frac{\zeta}{2}\bigg)\bigg) \bigg\}, \quad \alpha_0 \leq \zeta \leq \pi.
\end{empheq}
It turns out that the expression in braces on the right hand side of $(\ref{functg})$ can be simplified rather dramatically. Indeed, according to the linear transformation formula for the hypergeometric function \cite[p. 559, \#15.3.3]{abram}, we have
\[
{}_2 F_1(a,b;c;z) = (1-z)^{c-a-b} \, {}_2 F_1(c-a,c-b;c;z).
\]
With this in mind, it is a straightforward calculation to see that
\begin{equation}\label{hypergeom-transform-1}
{}_2 F_1\bigg(\frac{d-2}{2},\frac{d-1}{2};\frac{d}{2};\cos^2\bigg(\frac{\zeta}{2}\bigg)\bigg) = \sin^{-(d-3)} \bigg(\frac{\zeta}{2}\bigg) \, {}_2 F_1\bigg(1,\frac{1}{2};\frac{d}{2};\cos^2\bigg(\frac{\zeta}{2}\bigg)\bigg).
\end{equation}
Similarly,
\begin{equation}\label{hypergeom-transform-2}
 {}_2 F_1\bigg(\frac{d}{2},\frac{d+1}{2};\frac{d}{2}+1;\cos^2\bigg(\frac{\zeta}{2}\bigg)\bigg) = \sin^{-(d-1)} \bigg(\frac{\zeta}{2}\bigg) \, {}_2 F_1\bigg(1,\frac{1}{2};\frac{d}{2}+1;\cos^2\bigg(\frac{\zeta}{2}\bigg)\bigg).
\end{equation}
In light of $(\ref{hypergeom-transform-1})$ and $(\ref{hypergeom-transform-2})$, we see that 
\begin{empheq}{align}\label{hypergeom-transform-3}
 & {}_2 F_1\bigg(\frac{d-2}{2},\frac{d-1}{2};\frac{d}{2};\cos^2\bigg(\frac{\zeta}{2}\bigg)\bigg) +  \frac{d-1}{d} \, \cos^2\bigg(\frac{\zeta}{2}\bigg) \, {}_2 F_1\bigg(\frac{d}{2},\frac{d+1}{2};\frac{d}{2}+1;\cos^2\bigg(\frac{\zeta}{2}\bigg)\bigg) \\ \nonumber
& = \sin^{-(d-3)} \bigg(\frac{\zeta}{2}\bigg) \bigg\{ {}_2 F_1\bigg(1,\frac{1}{2};\frac{d}{2};\cos^2\bigg(\frac{\zeta}{2}\bigg)\bigg) + \frac{d-1}{d}
 \, \frac{\cos^2(\zeta/2)} {1-\cos^2(\zeta/2)} \, {}_2 F_1\bigg(1,\frac{1}{2};\frac{d}{2}+1;\cos^2\bigg(\frac{\zeta}{2}\bigg)\bigg) \bigg\}.
\end{empheq}
Using again the relation \cite[p. 558, \# 15.2.20]{abram},
\[
c\, (1-z)\,  {}_2 F_1 (a,b;c;z) - c \,  {}_2 F_1 (a-1,b;c;z) + (c-b)z \,  {}_2 F_1 (a,b;c+1;z) = 0,  
\]
it follows that
\[
 {}_2 F_1\bigg(1,\frac{1}{2};\frac{d}{2};\cos^2\bigg(\frac{\zeta}{2}\bigg)\bigg) + \frac{d-1}{d}
 \, \frac{\cos^2(\zeta/2)} {1-\cos^2(\zeta/2)} \, {}_2 F_1\bigg(1,\frac{1}{2};\frac{d}{2}+1;\cos^2\bigg(\frac{\zeta}{2}\bigg)\bigg) =  \sin^{-2}\bigg(\frac{\zeta}{2}\bigg),
\]
which, in conjunction with $(\ref{hypergeom-transform-3})$, allows us to conclude
\[
 {}_2 F_1\bigg(\frac{d-2}{2},\frac{d-1}{2};\frac{d}{2};\cos^2\bigg(\frac{\zeta}{2}\bigg)\bigg) +  \frac{d-1}{d} \, \cos^2\bigg(\frac{\zeta}{2}\bigg) \, {}_2 F_1\bigg(\frac{d}{2},\frac{d+1}{2};\frac{d}{2}+1;\cos^2\bigg(\frac{\zeta}{2}\bigg)\bigg) = \sin^{-(d-1)} \bigg(\frac{\zeta}{2}\bigg).
\]
Inserting the latter into $(\ref{functg})$, we obtain the desired simplified expression for $g(\zeta)$,
\begin{equation}\label{functg-simplf}
g(\zeta) = -  \frac {q\, \sqrt{\pi}\, (d-2) \, \Gamma((d-1)/2)}  {2^{(d-1)/2}\, \Gamma(d/2)}  \, \csc\bigg(\frac{\zeta}{2}\bigg), \quad \alpha_0 \leq \zeta \leq \pi.
\end{equation}
Having $g(\zeta)$ computed, we now proceed to evaluating the term $F(\eta)$ given by $(\ref{auxfSP})$, which describes the contribution of the external field $Q$ in the expression $(\ref{equildensSP})$ for the equilibrium measure $\mu_Q$. Upon inserting $(\ref{functg-simplf})$ into the right hand side of $(\ref{auxfSP})$, we are presented with evaluation of the following integral,
\[
\int_{\alpha_0}^\eta \frac {g(\zeta) \, \sin\zeta \, d\zeta} {\sqrt{\cos\zeta - \cos\eta}} =  -  \frac {q\, \sqrt{\pi}\, (d-2) \, \Gamma((d-1)/2)}  {2^{(d-2)/2}\, \Gamma(d/2)}  \,  \int_{\alpha_0}^\eta \frac {\sin\zeta \, d\zeta} {\sqrt{1-\cos\zeta} \, \sqrt{\cos\zeta - \cos\eta}}.
\]
The integral on the right hand side of the last expression can be easily evaluated using the substitution $t^2=\cos\zeta - \cos\eta$, so that
\[
\int_{\alpha_0}^\eta \frac {\sin\zeta \, d\zeta} {\sqrt{1-\cos\zeta} \, \sqrt{\cos\zeta - \cos\eta}} = 2 \, \sin^{-1}\sqrt{\frac{\cos\alpha_0-\cos\eta}{1-\cos\eta}}.
\]
Now it is not hard to see that
\begin{equation}\label{functg-simplf-diff}
\frac{d}{d\eta} \, \int_{\alpha_0}^\eta \frac {g(\zeta) \, \sin\zeta \, d\zeta} {\sqrt{\cos\zeta - \cos\eta}} =  - \frac {q\, \sqrt{\pi}\, (d-2) \, \Gamma((d-1)/2)}  {2^{(d-2)/2}\, \Gamma(d/2)}  \, \frac {\sin\eta} {1-\cos\eta} \, \sqrt{\frac{1-\cos\alpha_0}{\cos\alpha_0-\cos\eta}}, \quad \alpha_0\leq\eta\leq\pi.
\end{equation}
Inserting $(\ref{functg-simplf-diff})$ into $(\ref{auxfSP})$ and simplifying, we find
\begin{equation*}
F(\eta) = - \frac {q\, \Gamma((d-1)/2)}  {\sqrt{2}\,  \pi^{(d+1)/2}}  \, \frac {1} {(1-\cos\eta)^{(d-1)/2}} \, \sqrt{\frac{1-\cos\alpha_0}{\cos\alpha_0-\cos\eta}}, \quad \alpha_0\leq\eta\leq\pi.
\end{equation*}
Having $F(\eta)$ at hand, the constant $C_Q$ is found from $(\ref{constCQSP})$, thus leading us to the integral
\begin{equation}\label{F-1}
\int_{\alpha_0}^\pi F(\eta) \, \sin^{d-2}\eta \, d\eta = - \frac {q\, \Gamma((d-1)/2)}  {\sqrt{2}\,  \pi^{(d+1)/2}} \, \sqrt{1-\cos\alpha_0} \, \int_{\alpha_0}^\pi \frac{(1+\cos\eta)^{(d-3)/2} \, \sin\eta \, d\eta}{(1-\cos\eta) \, \sqrt{\cos\alpha_0 - \cos\eta}}.
\end{equation}
Using the standard substitution $1+\cos\eta=2t$, it follows that 
\begin{align*}
\int_{\alpha_0}^\pi \frac{(1+\cos\eta)^{(d-3)/2} \, \sin\eta \, d\eta}{(1-\cos\eta) \, \sqrt{\cos\alpha_0 - \cos\eta}} & = 2^{(d-4)/2} \, \bigg(\frac{1+\cos\alpha_0}{2}\bigg)^{(d-2)/2} \, \frac {\Gamma((d-1)/2)\, \sqrt{\pi}} {\Gamma(d/2)} \, {}_2 F_1\bigg(1,\frac{d-1}{2};\frac{d}{2};\cos^2\bigg(\frac{\alpha_0}{2}\bigg)\bigg) \\
& = \frac {\sqrt{\pi}\, 2^{(d-3)/2} \, \Gamma((d-1)/2)} {\Gamma(d/2-1)} \, \frac{1}{\sqrt{1-\cos\alpha_0}} \, \Beta\bigg(\cos^2\bigg(\frac{\alpha_0}{2}\bigg); \frac{d-2}{2}, \frac{1}{2}\bigg),
\end{align*}
where we used the integral representation for the hypergeometric function \cite[p. 558, \# 15.3.1]{abram}, as well as relation $(\ref{ex1transform})$. Substituting the value of the last integral into $(\ref{F-1})$, we find that
\begin{equation}\label{F-2}
\int_{\alpha_0}^\pi F(\eta) \, \sin^{d-2}\eta \, d\eta = - \frac {q\, 2^{(d-4)/2} \, (\Gamma((d-1)/2))^2}  {\pi^{d/2} \, \Gamma(d/2-1)} \, \Beta\bigg(\cos^2\bigg(\frac{\alpha_0}{2}\bigg); \frac{d-2}{2}, \frac{1}{2}\bigg).
\end{equation}
Hence, after substituting $(\ref{F-2})$ into $(\ref{constCQSP})$ and some simple algebra, we infer that the constant $C_Q$ is given by
\begin{empheq}{align*}
C_Q  = \frac {\Gamma(d/2-1)} {2^{d-1}\, \pi^{d/2}} \, \bigg(\Beta\bigg(\cos^2\bigg(\frac{\alpha_0}{2}\bigg);  \frac{d-2}{2},   \frac{d}{2}\bigg) & \bigg)^{-1} \times \\ \nonumber
& \bigg\{ 1 + \frac {q \, 2^{(d-2)/2}\, \Gamma((d-1)/2)} {\sqrt{\pi}\, \Gamma(d/2-1)} \,  \Beta\bigg(\cos^2\bigg(\frac{\alpha_0}{2}\bigg); \frac{d-2}{2}, \frac{1}{2}\bigg)  \bigg\}.
\end{empheq}

\qed

% ------------------------------------------------------------------------------- Theorem 2.6 -----------------------------------------------------------------------------------

\noindent{\bf Proof of Theorem \ref{msf-quadratic-ext-field-theo}.} Substituting expression $(\ref{quadratic-ext-field})$ into $(\ref{msfunc})$, we are presented with the following integral
\begin{align*}
& \int_\alpha^\pi  Q(\eta)  \, \bigg(\frac{1-\cos\alpha}{1-\cos\eta} \bigg)^{\frac{d-1}{2}} \bigg(\frac{1-\cos\alpha}{\cos\alpha-\cos\eta} \bigg)^{\frac{1}{2}}  {}_2 F_1\bigg (1,  \frac {d-1} {2};\frac {1} {2};   \frac{\cos\alpha-\cos\eta}{1-\cos\eta} \bigg) \, \sin^{d-2}\eta\, d\eta \\
& = (1-t)^{d/2} \, \int_{-1}^t  (1-u)^{-1}  \, (1+u)^{(d+1)/2}  \, (t-u)^{-1/2} \,  {}_2 F_1\bigg (1,  \frac {d-1} {2};\frac {1} {2};   \frac{t-u}{1-u} \bigg) \, du,
\end{align*}
where we set $t:=\cos\alpha$ and $u:=\cos\eta$. We thus need to evaluate
\begin{equation*}
J = \int_{-1}^t  (1-u)^{-1}  \, (1+u)^{(d+1)/2}  \, (t-u)^{-1/2} \,  {}_2 F_1\bigg (1,  \frac {d-1} {2};\frac {1} {2};   \frac{t-u}{1-u} \bigg) \, du.
\end{equation*}
This integral will be evaluated using the same approach we used when evaluating similar integral $(\ref{msfuncex_integI})$. Letting $a=-1$, $b=t$, $c=1$, $\gamma = 1/2$, $\beta = (d-1)/2$, $x = (b-a)/(c-a)$, and again using the substitution $(b-u)/(c-u) = x v$, we find
\begin{empheq}{align}\label{J-integral}
J & = \int_{-1}^t  (1-u)^{-1}  \, (1+u)^{(d+1)/2}  \, (t-u)^{-1/2} \,  {}_2 F_1\bigg (1,  \frac {d-1} {2};\frac {1} {2};   \frac{t-u}{1-u} \bigg) \, du \\ \nonumber
	& =  (c-b)^{\gamma-1} \, (b-a)^{\beta+1} \, x^\gamma \, \int_0^1 v^{\gamma-1} \, (1-v)^{\beta+1} \, (1-xv)^{-(\beta+\gamma+1)} \, {}_2 F_1 (1,\beta;\gamma;xv)\, dv \\ \nonumber
	& = (c-b)^{\gamma-1} \, (b-a)^{\beta+1} \, x^\gamma \, I,
\end{empheq}
where
\begin{equation*}
I:=\int_0^1 v^{\gamma-1} \, (1-v)^{\beta+1} \, (1-xv)^{-(\beta+\gamma+1)} \, {}_2 F_1 (1,\beta;\gamma;xv)\, dv.
\end{equation*}
Using the series representation for the Gauss hypergeometric function ${}_2 F_1$ \cite[p. 556, \# 15.1.1]{abram} and integrating term-by-term, we further write
\begin{empheq}{align}\label{msf-quadratic-ext-field-integ-3}
I & = \int_0^1 v^{\gamma-1} \, (1-v)^{\beta+1} \, (1-xv)^{-(\beta+\gamma+1)} \, {}_2 F_1 (1,\beta;\gamma;xv)\, dv \\ \nonumber
	& = \sum_{n=0}^\infty \frac {(\beta)_n} {(\gamma)_n} \, x^n \, \int_0^1 v^{n+\gamma-1} \, (1-v)^{\beta+1} \, (1-xv)^{-(\beta+\gamma+1)} \, dv \\ \nonumber
	& = \sum_{n=0}^\infty \frac {(\beta)_n} {(\gamma)_n} \, x^n \, I_n,
\end{empheq}
where
\begin{equation*}
I_n: = \int_0^1 v^{n+\gamma-1} \, (1-v)^{\beta+1} \, (1-xv)^{-(\beta+\gamma+1)} \, dv.
\end{equation*}
According to the integral representation of the function ${}_2 F_1$ \cite[p. 558, \# 15.3.1]{abram}, we further have
\begin{equation*}
I_n = \frac {\Gamma(n+\gamma) \, \Gamma(\beta+2)} {\Gamma(n+\gamma+\beta+2)} \, {}_2 F_1 (\beta+\gamma+1,n+\gamma;n+\gamma+\beta+2;x).
\end{equation*}
Recalling that the function ${}_2 F_1$ is symmetric with respect to switching the first two parameters \cite[p. 556, \# 15.1.1]{abram} , that is ${}_2 F_1 (\beta+\gamma+1,n+\gamma;n+\gamma+\beta+2;x) = {}_2 F_1 (n+\gamma, \beta+\gamma+1;n+\gamma+\beta+2;x)$, and using the fact that $n+\gamma+\beta +2 >  \beta+\gamma+1$ for all integer $n\geq 0$, we continue by using again the integral representation of ${}_2 F_1$ \cite[p. 558, \# 15.3.1]{abram} as follows,
\begin{align*}
I_n & = \frac {\Gamma(n+\gamma) \, \Gamma(\beta+2)} {\Gamma(n+\gamma+\beta+2)} \, {}_2 F_1 (\beta+\gamma+1,n+\gamma;n+\gamma+\beta+2;x) \\
	& = \frac {\Gamma(n+\gamma) \, \Gamma(\beta+2)} {\Gamma(n+\gamma+\beta+2)} \, {}_2 F_1 (n+\gamma, \beta+\gamma+1;n+\gamma+\beta+2;x) \\
	& = \frac {\Gamma(n+\gamma)\,\Gamma(\beta+2)} {\Gamma(\beta+\gamma+1)\, \Gamma(n+1)} \,  \int_0^1 v^{\beta+\gamma} \, (1-v)^n \, (1-xv)^{-(n+\gamma)} \, dv.
\end{align*}
Inserting the last expression into $(\ref{msf-quadratic-ext-field-integ-3})$, and switching the order of integration and summation, which is justified by the uniform convergence of the series as $v\in [0,1]$,  we have
\begin{align*}
I & = \sum_{n=0}^\infty \frac {(\beta)_n} {(\gamma)_n} \, x^n \, I_n \\
	& = \frac {\Gamma(\beta+2)} {\Gamma(\beta+\gamma+1)} \, \int_0^1 v^{\beta+\gamma} \, (1-xv)^{-\gamma} \, dv \, \sum_{n=0}^\infty \frac {(\beta)_n\, \Gamma(n+\gamma)} {\Gamma(n+1)\, (\gamma)_n} \, \bigg( x \, \frac {1-v}{1-xv} \bigg)^n
\end{align*}
It is not difficult to see that
\begin{equation*}
\sum_{n=0}^\infty \frac {(\beta)_n\, \Gamma(n+\gamma)} {\Gamma(n+1)\, (\gamma)_n} \, z^n  = \Gamma(\gamma) \, (1-z)^{-\beta}, \quad |z|<1.
\end{equation*}
We thus eventually find
\begin{align*}
I & = \frac {\Gamma(\beta+2) \, \Gamma(\gamma)} { \Gamma(\beta+\gamma+1)} \,  (1-x)^{-\beta} \,  \int_0^1 v^{\beta+\gamma} \, (1-xv)^{-(\gamma-\beta)} \, dv \\
	& = \frac {\sqrt{\pi}\, \Gamma(\beta+2)} {\Gamma(\beta+\gamma+2)} \, (1-x)^{-\beta} \, {}_2 F_1(\gamma-\beta,\beta+\gamma+1;\beta+\gamma+2;x),
\end{align*}
where we again used the integral representation of the hypergeometric function ${}_2 F_1$ \cite[p. 558, \# 15.3.1]{abram}. Inserting the latter into $(\ref{J-integral})$ and simplifying, we deduce that
\begin{align*}
& \int_\alpha^\pi  Q(\eta)  \, \bigg(\frac{1-\cos\alpha}{1-\cos\eta} \bigg)^{\frac{d-1}{2}} \bigg(\frac{1-\cos\alpha}{\cos\alpha-\cos\eta} \bigg)^{\frac{1}{2}}  {}_2 F_1\bigg (1,  \frac {d-1} {2};\frac {1} {2};   \frac{\cos\alpha-\cos\eta}{1-\cos\eta} \bigg) \, \sin^{d-2}\eta\, d\eta \\
& = \frac {2^{d/2-1}\, \sqrt{\pi}\, \Gamma((d+3)/2)} {\Gamma(d/2+2)} \, (1+\cos\alpha)^{d/2+1} \, {}_2 F_1\bigg (1-\frac{d}{2},\frac {d}{2}+1;\frac{d}{2}+2;\cos^2\bigg(\frac{\alpha}{2}\bigg) \bigg).
\end{align*}
The last expression can be simplified even further. Indeed, using a linear transformation formula for the hypergeometric function \cite[p. 559, \#15.3.3]{abram} and $(\ref{hypergeom-to-beta})$, we find obtain a neat formula
\begin{align*}
\int_\alpha^\pi  Q(\eta)  \, \bigg(\frac{1-\cos\alpha}{1-\cos\eta} \bigg)^{\frac{d-1}{2}} \bigg(\frac{1-\cos\alpha}{\cos\alpha-\cos\eta} \bigg)^{\frac{1}{2}} \,  {}_2 F_1\bigg (1, \frac {d-1} {2}; & \frac {1} {2}; \frac{\cos\alpha-\cos\eta}{1-\cos\eta} \bigg) \, \sin^{d-2}\eta\, d\eta \\
& = \frac {2^{d/2}\, \sqrt{\pi}\, \Gamma((d+3)/2)} {\Gamma(d/2+1)} \, \Beta \bigg( \cos^2\bigg(\frac {\alpha} {2} \bigg) ; \frac{d}{2}+1,\frac{d}{2}\bigg).
\end{align*}
Inserting the last integral into $(\ref{msfunc})$, we obtain the desired expression $(\ref{msf-quadratic-ext-field})$.

\qed

% ------------------------------------------------------------------------------- Theorem 2.7 -----------------------------------------------------------------------------------

\noindent{\bf Proof of Theorem \ref{density-rational-ext-field}.} Let 
\begin{empheq}{align}\label{func-w}
w(\alpha)  =  \frac {\sqrt{\pi}\,  \Gamma(d/2-1)} {2^{d-2}\,  \Gamma((d-1)/2)} \,   \bigg(\Beta\bigg(\cos^2\bigg(\frac {\alpha} {2} \bigg);  \frac{d-2}{2}, & \frac{d}{2} \bigg) \bigg)^{-1} \\ \nonumber 
& \bigg\{ 1 + \frac {2^{d/2}\, \Gamma((d+3)/2)} {\sqrt{\pi}\, \Gamma(d/2+1)} \, \Beta \bigg( \cos^2\bigg(\frac {\alpha} {2} \bigg) ; \frac{d}{2}+1,\frac{d}{2}\bigg)  \bigg\}.
\end{empheq}
Differentiating $w(\alpha)$, we find
\begin{equation}\label{func-w-der}
w'(\alpha) = \sin^{d-1}\bigg(\frac {\alpha} {2} \bigg) \, \cos^{d-3}\bigg(\frac {\alpha} {2} \bigg) \,  \bigg(\Beta\bigg(\cos^2\bigg(\frac {\alpha} {2} \bigg);  \frac{d-2}{2},  \frac{d}{2} \bigg) \bigg)^{-1} \, \omega(\alpha),
\end{equation}
where
\begin{equation}\label{func-omega}
\omega(\alpha) = w(\alpha) - \frac {4(d^2-1)}{d(d-2)} \,  \cos^4\bigg(\frac {\alpha} {2} \bigg).
\end{equation}
We therefore see that the critical points of $w(\alpha)$ are given by solutions of the equation
\begin{equation}\label{crit-points}
w(\alpha) = \frac {4(d^2-1)}{d(d-2)} \,  \cos^4\bigg(\frac {\alpha} {2} \bigg).
\end{equation}
Rearranging the latter, we obtain $(\ref{eqn-crit-pnts-quadratic-ext-field})$.

We continue by showing the existence and uniqueness of a critical point of $w(\alpha)$.  Our approach will be based on an argument developed in \cite{bds1}. First, observe that from  $(\ref{func-w})$ and $(\ref{func-omega})$ it follows that
\begin{equation*}
\lim_{\alpha \rightarrow \pi-} \omega (\alpha)  =  +\infty.
\end{equation*}
Hence, there is a smallest $\alpha_0\in[0,\pi)$ such that $\omega(\alpha)>0$ for $\alpha\in(\alpha_0,\pi)$. If $\alpha_0=0$, then $w(\alpha)$ is 
strictly increasing on $(0,\pi)$, and attains minimum at $\alpha=0$. If $\alpha_0>0$, we have that $\omega(\alpha)>0$ for $\alpha\in(\alpha_0,\pi)$. Taking into account the continuity of $\omega(\alpha)$, by passing to the limit $\alpha\rightarrow \alpha_0+$ in the latter inequality, we infer that $\omega(\alpha_0)\geq 0$. Since $\alpha_0$ was the smallest $\alpha$ such that $\omega(\alpha)>0$ on $(\alpha_0,\pi)$, we deduce that $\omega(\alpha_0) = 0$.

From expression $(\ref{func-w-der})$ it is clear that the sign of $w'(\alpha)$ is determined by the sign of $\omega(\alpha)$. This shows that $w'(\alpha)>0$ on $(\alpha_0,\pi)$, and $w'(\alpha_0)=0$. Next, suppose that $\xi\in(0,\pi)$ is a critical point of $w(\alpha)$, that is $w'(\xi)=0$. Using expression $(\ref{func-w-der})$, we readily find that
\begin{empheq}{align}\label{2ndder}
 w''(\alpha)  & = \bigg[ \sin^{d-1}\bigg(\frac {\alpha} {2} \bigg) \, \cos^{d-3}\bigg(\frac {\alpha} {2} \bigg) \,  \bigg(\Beta\bigg(\cos^2\bigg(\frac {\alpha} {2} \bigg);  \frac{d-2}{2},  \frac{d}{2} \bigg) \bigg)^{-1} \bigg]' \, \omega(\alpha)  \\ \nonumber 
	& + \sin^{d-1}\bigg(\frac {\alpha} {2} \bigg) \, \cos^{d-3}\bigg(\frac {\alpha} {2} \bigg) \,  \bigg(\Beta\bigg(\cos^2\bigg(\frac {\alpha} {2} \bigg);  \frac{d-2}{2},  \frac{d}{2} \bigg) \bigg)^{-1} \omega'(\alpha),
\end{empheq}
where
\begin{equation}\label{omega-der}
 \omega'(\alpha)  = w'(\alpha)  + \frac {8(d^2-1)}{d(d-2)} \cos^3\bigg(\frac {\alpha} {2} \bigg) \sin\bigg(\frac {\alpha} {2} \bigg).
\end{equation}
We want to show that $w''(\xi)>0$. This readily follows from $(\ref{2ndder})$, since for $0<\xi<\pi$,
\begin{equation*}
 w''(\xi)  =  \frac {8(d^2-1)}{d(d-2)} \cos^3\bigg(\frac {\xi} {2} \bigg)  \sin\bigg(\frac {\xi} {2} \bigg) > 0.
\end{equation*}

\noindent This means that $w(\alpha)$ has exactly one global minimum on $[0,\pi)$, which is either a unique solution $\alpha_0\in(0,\pi)$ of equation $(\ref{crit-points})$, if it exists, or $\alpha_0=0$, if such a solution does not exist.

We finish by computing the equilibrium density. Substituting expression $(\ref{quadratic-ext-field})$ into $(\ref{auxsSP})$, we are led to the following integral
\begin{equation*}
\int_\zeta^\pi \frac {Q(\theta)\, \cos^{d-3}(\theta/2)\, \sin\theta\, d\theta} {\sqrt{\cos\zeta-\cos\theta}} = \frac{1}{2^{(d-3)/2}} \, \int_\zeta^\pi (1+ \cos\theta)^{(d+1)/2} \, \frac {\sin\theta \, d\theta} {\sqrt{\cos\zeta-\cos\theta}}.
\end{equation*}
Making the change of variables $1+\cos\theta=(1+\cos\zeta)t$, we further write
\begin{align*}
 2^{-(d-3)/2} \, \int_\zeta^\pi  (1+ \cos\theta)^{(d+1)/2} \,   \frac {\sin\theta \, d\theta} {\sqrt{\cos\zeta-\cos\theta}} & = 2^{-(d-3)/2}\, (1+\cos\zeta)^{(d+2)/2} \, \int_0^1 t^{(d+1)/2} \, (1-t)^{-1/2} \, dt \\
& = 2^{-(d-3)/2}\, (1+\cos\zeta)^{(d+2)/2}\, \Beta\left(\frac{d+3}{2},\frac{1}{2}\right) \\
& = \frac {2^{5/2}\, \sqrt{\pi}\, \Gamma((d+3)/2)}{\Gamma(d/2+2)}\, \cos^{d+2} \bigg(\frac {\zeta} {2} \bigg).
\end{align*}
We thus conclude
\begin{equation}\label{quadratic-ext-field-func-g}
\int_\zeta^\pi \frac {Q(\theta)\, \cos^{d-3}(\theta/2)\, \sin\theta\, d\theta} {\sqrt{\cos\zeta-\cos\theta}} = \frac {2^{5/2}\, \sqrt{\pi}\, \Gamma((d+3)/2)}{\Gamma(d/2+2)}\, \cos^{d+2} \bigg(\frac {\zeta} {2} \bigg).
\end{equation}
Differentiating $(\ref{quadratic-ext-field-func-g})$ with respect to $\zeta$ and inserting the result into $(\ref{auxsSP})$, we find
\begin{equation}\label{quadratic-ext-field-func-g-expres}
g(\zeta) = - \frac {2^{5/2}\, \sqrt{\pi}\, \Gamma((d+3)/2)}{\Gamma(d/2+1)}\, \sin^{d-2} \bigg(\frac {\zeta} {2} \bigg)\, \cos^4 \bigg(\frac {\zeta} {2} \bigg), \quad \alpha_0 \leq \zeta \leq \pi.
\end{equation}
Substituting $(\ref{quadratic-ext-field-func-g-expres})$ into the right hand side of $(\ref{auxfSP})$, we arrive to the integral
\[
\int_{\alpha_0}^\eta  \sin^{d-2} \bigg(\frac {\zeta} {2} \bigg)\, \cos^4 \bigg(\frac {\zeta} {2} \bigg) \, \frac {\sin \zeta \, d\zeta} {\sqrt{\cos\zeta - \cos \eta}}.
\]
Making the change of variables $\zeta=\pi-y$, and setting $\widetilde{\alpha_0}=\pi-\alpha_0$, $\widetilde{\eta}=\pi-\eta$, we recast the latter integral as
\begin{align*}
\int_{\alpha_0}^\eta  \sin^{d-2} \bigg(\frac {\zeta} {2} \bigg)\, \cos^4 \bigg(\frac {\zeta} {2} \bigg) \, \frac {\sin \zeta \, d\zeta} {\sqrt{\cos\zeta - \cos \eta}} =  & \int_{\widetilde{\eta}}^{\widetilde{\alpha_0}}  \cos^{d-2} \bigg(\frac {y} {2} \bigg)\, \sin^4 \bigg(\frac {y} {2} \bigg) \, \frac {\sin y \, dy} {\sqrt{\cos y - \cos \widetilde{\eta}}} \\
 = 2^{-(d/2+1)}\, & \int_{\widetilde{\eta}}^{\widetilde{\alpha_0}} (1+ \cos y)^{(d-2)/2}\, (1-\cos y)^2 \, \frac {\sin y \, dy} {\sqrt{\cos y - \cos \widetilde{\eta}}}.
\end{align*}
Making a simple observation that $1-\cos y=(1+\cos y)-2$, allows us to continue the above string of integrals as
\begin{align*}
\int_{\widetilde{\eta}}^{\widetilde{\alpha_0}} (1+ \cos y)^{(d-2)/2}\, (1-\cos y)^2 \, \frac {\sin y \, dy} {\sqrt{\cos y - \cos \widetilde{\eta}}} & = \int_{\widetilde{\eta}}^{\widetilde{\alpha_0}} (1+ \cos y)^{(d-2)/2}\, ((1+\cos y)-2)^2 \, \frac {\sin y \, dy} {\sqrt{\cos y - \cos \widetilde{\eta}}} \\
& = \int_{\widetilde{\eta}}^{\widetilde{\alpha_0}} (1+ \cos y)^{(d-2)/2}\, (1+\cos y)^2 \, \frac {\sin y \, dy} {\sqrt{\cos y - \cos \widetilde{\eta}}} \\
& - 4\, \int_{\widetilde{\eta}}^{\widetilde{\alpha_0}} (1+ \cos y)^{(d-2)/2}\, (1+\cos y) \, \frac {\sin y \, dy} {\sqrt{\cos y - \cos \widetilde{\eta}}} \\
& + 4\, \int_{\widetilde{\eta}}^{\widetilde{\alpha_0}} (1+ \cos y)^{(d-2)/2}\,  \frac {\sin y \, dy} {\sqrt{\cos y - \cos \widetilde{\eta}}}.
\end{align*}
The three integrals on the right hand side of the last expression are evaluated using the change of variables $1+\cos y=(1+\cos\widetilde{\eta})t$. Performing these straightforward but tedious evaluations, and reverting back to $\alpha_0$ and $\eta$, we eventually obtain
\begin{align*}
\int_{\alpha_0}^\eta  \sin^{d-2} \bigg(\frac {\zeta} {2} \bigg)\, \cos^4 \bigg(\frac {\zeta} {2} \bigg) \,  \frac {\sin \zeta \, d\zeta} {\sqrt{\cos\zeta - \cos \eta}} &  \\ 
= 2^{-(d+2)/2}  \bigg\{  (1-\cos\eta)^{(d+3)/2}\, & \Beta \bigg( \frac{\cos\alpha_0-\cos\eta}{1-\cos\eta} ; \frac{1}{2},\frac{d}{2}+2\bigg) \\
 -  4\, (1-\cos\eta)^{(d+1)/2}\, & \Beta \bigg( \frac{\cos\alpha_0-\cos\eta}{1-\cos\eta} ; \frac{1}{2},\frac{d}{2}+1\bigg) \\
 + 4\, (1-\cos\eta)^{(d-1)/2}\,  & \Beta \bigg( \frac{\cos\alpha_0-\cos\eta}{1-\cos\eta} ; \frac{1}{2},\frac{d}{2}\bigg) \bigg\}.
\end{align*}
Differentiating the above expression with respect to $\eta$, and inserting the result into $(\ref{auxfSP})$, after simplifications we derive
\begin{empheq}{align}\label{F-quadratic-ext-field}
F(\eta) =  - \frac  { 2\Gamma((d+3)/2)} { d(d-2)\pi^{(d+1)/2} }  \, \bigg\{& \left( \frac {1-\cos\alpha_0} {1-\cos\eta} \right)^{d/2} \, \sqrt{\frac{1-\cos\eta}{\cos\alpha_0-\cos\eta}}\, (1+\cos\alpha_0)^2 \\ \nonumber
& + 2(d-1)\, \Beta \bigg( \frac{\cos\alpha_0-\cos\eta}{1-\cos\eta} ; \frac{1}{2},\frac{d}{2}\bigg) \\ \nonumber
& - 2(d+1)\,(1-\cos\eta)\,  \Beta \bigg( \frac{\cos\alpha_0-\cos\eta}{1-\cos\eta} ; \frac{1}{2},\frac{d}{2}+1 \bigg) \\ \nonumber
& + \frac{d+3}{2}\,(1-\cos\eta)^2\,  \Beta \bigg( \frac{\cos\alpha_0-\cos\eta}{1-\cos\eta} ; \frac{1}{2},\frac{d}{2}+2 \bigg) \bigg\}, \quad \alpha_0 \leq \eta \leq \pi,
\end{empheq}
The value of the Robin constant can now be found from $(\ref{constCQSP})$. However, going via this standard route with the function $F(\eta)$ of the type $(\ref{F-quadratic-ext-field})$ usually involves laborious calculations. Luckily, there is an alternative to that. Indeed, one observes that from the variational inequalities $(\ref{var1})$-$(\ref{var2})$ and Proposition \ref{msfmp} it follows that $\cF(C_{S,\alpha_0})=F_Q$. Therefore, using $(\ref{msf-quadratic-ext-field})$, we deduce that
\begin{equation*}
F_Q  =  \frac {\sqrt{\pi}\,  \Gamma(d/2-1)} {2^{d-2}\,  \Gamma((d-1)/2)} \,  \bigg(\Beta\bigg(\cos^2\bigg(\frac {\alpha_0} {2} \bigg);  \frac{d-2}{2},  \frac{d}{2} \bigg) \bigg)^{-1} \bigg\{1  + \frac {2^d\, \Gamma((d+3)/2)} {\sqrt{\pi} \, \Gamma(d/2+1)} \Beta \bigg( \cos^2\bigg(\frac {\alpha_0} {2} \bigg) ; \frac{d}{2}+1,\frac{d}{2}\bigg)\bigg\},
\end{equation*}
which in turn implies that
\begin{equation*}
C_Q  =  \frac {\Gamma(d/2-1)} {2^{d-1}\,  \pi^{d/2}} \, \bigg(\Beta\bigg(\cos^2\bigg(\frac {\alpha_0} {2} \bigg);  \frac{d-2}{2},  \frac{d}{2} \bigg) \bigg)^{-1} \bigg\{ 1  + \frac {2^d\, \Gamma((d+3)/2)} {\sqrt{\pi} \, \Gamma(d/2+1)} \Beta \bigg( \cos^2\bigg(\frac {\alpha_0} {2} \bigg) ; \frac{d}{2}+1,\frac{d}{2}\bigg)\bigg\}.
\end{equation*}
This completes the proof of the theorem.

\qed

\section{Acknowledgements} 
\noindent This work was done in partial fulfillment of Ph.D. degree at Oklahoma State University under the supervision of Prof. Igor E. Pritsker.

\end{document}